\newcommand{\sq}{\varepsilon}
\newcommand{\ssq}{\sqrt{\varepsilon}}
\newcommand{\uN}{U}
\newcommand{\pN}{P}
\newcommand{\qN}{Q}
\newcommand{\wN}{\bm{W}}
\newcommand{\vph}{\bm{\mathbbm{z}}}
\newcommand{\vphu}{\mathbbm{v}}
\newcommand{\vphq}{\mathbbm{r}}
\newcommand{\vphp}{\mathbbm{s}}
\newcommand{\dual}[2]{\left\langle#1,#2\right\rangle}
\newcommand{\norm}   [1] {\left\Vert#1\right\Vert}
\newcommand{\enorm}   [1] {\interleave #1\interleave_{E}}
\newcommand{\lnorm}   [1] {\interleave #1\interleave_{2}}
\newcommand{\jump}   [1] {[\![#1]\!]}
\newcommand{\prou}{\Pi^-}
\newcommand{\prop}{\Pi_x^+}
\newcommand{\proq}{\Pi_y^+}
\newcommand{\sobv}{\mathcal{V}_N}
\newcommand{\Bln}[2]{B(#1;#2)}
\newcommand{\spc}{\mathcal{V}_N}
\newtheorem  {theorem}    {\hspace{15pt} Theorem}[section]
\newtheorem  {remark}     {\hspace{15pt} Remark}[section]
\newtheorem  {lemma}     {\hspace{15pt} Lemma}[section]
\newtheorem  {assumption}     {\hspace{15pt} Assumption}[section]
\title{The local discontinuous Galerkin method for\\
 a singularly perturbed convection-diffusion problem\\
    with characteristic and exponential layers\footnotemark[1]}
\author{
Yao Cheng\footnotemark[2]
\quad
Martin Stynes\footnotemark[3]
}
\begin{document}

\maketitle

\renewcommand{\thefootnote}{\fnsymbol{footnote}}

\footnotetext[1]
{This paper was partly written during a visit 
by Yao Cheng to Beijing Computational Science Research Center  
in July 2022. The research of Y. Cheng was supported by NSFC grant 11801396 
and Natural Science Foundation of Jiangsu Province grant BK20170374.
The research of M. Stynes was partly supported by
NSFC grants 12171025 and NSAF-U1930402.
}

\footnotetext[2]
{School of Mathematical Sciences,
	Suzhou University of Science and Technology,
	Suzhou 215009, Jiangsu Province, China;
	ORCID 0000-0001-9500-9398,
	\textbf{ycheng@usts.edu.cn} 
}

\footnotetext[3]
{Corresponding author. Applied and Computational Mathematics Division, Beijing Computational Science Research Center, 
	Beijing 100193, China; 
	ORCID 0000-0003-2085-7354, 
	\textbf{m.stynes@csrc.ac.cn}
}


\begin{abstract}
A singularly perturbed convection-diffusion problem,
posed on the unit square in $\mathbb{R}^2$,  is studied;
its solution has
both exponential and characteristic boundary layers.
The problem is solved numerically using the local discontinuous Galerkin (LDG) method on 
Shishkin meshes.
Using tensor-product piecewise  polynomials of degree at most $k>0$
in each variable, 
the error between the LDG solution and the true solution
is proved to converge,
uniformly in the singular perturbation parameter,
at a rate of $O((N^{-1}\ln N)^{k+1/2})$
in an associated energy norm, where
$N$ is the number of mesh intervals in each coordinate direction.
(This is the first uniform convergence result proved for the LDG method applied to a problem with characteristic boundary layers.)
Furthermore, we prove that this order of convergence 
increases to $O((N^{-1}\ln N)^{k+1})$
when one measures the energy-norm difference between the LDG solution 
and a local Gauss-Radau projection of the true solution into the finite element space.
This uniform supercloseness property implies an optimal $L^2$ error estimate of 
order $(N^{-1}\ln N)^{k+1}$ for our LDG method. 
Numerical experiments show the sharpness of  our theoretical results.

\end{abstract}

\noindent\emph{2020 Mathematics Subject Classification:} Primary 65N15, 65N30.\\

\noindent\emph{Keywords:} 
Local discontinuous Galerkin method,
singularly perturbed,
characteristic layers,
exponential layer,
Shishkin meshes,
supercloseness,
Gauss-Radau projection.

\section{Introduction}
\label{intro}

Consider the singularly perturbed convection-diffusion problem
\begin{subequations}\label{cd:spp:parabolic}
	\begin{align}
	-\varepsilon \Delta u + a(x,y) u_x + b(x,y) u
	&= f(x,y)  \quad \text{ in }  \Omega=(0,1)\times (0,1),   \label{spp:pde}
	\\
	u &= 0     \quad \text{ on }  \partial\Omega,  \label{spp:bc}
	\end{align}
\end{subequations}
where $\varepsilon>0$ is a small parameter, 
$a$, $b$ and $f$ are sufficiently smooth,
$a\geq \alpha$, $b\geq 0$ on $\overline{\Omega}$.
Here $\alpha$ is a positive constant.
Assume that 
\begin{equation}\label{assumption:coef:2d}
b- \frac12 a_x \geq \beta>0, \qquad (x,y)\in \overline{\Omega}
\end{equation}
for some positive constant $\beta$.
With these assumptions, it is straightforward to use the Lax-Milgram lemma to show that the problem \eqref{cd:spp:parabolic}
has a unique solution in $H_0^1(\Omega)\cap H_2(\Omega)$.
Note that for the small $\varepsilon>0$,
condition (\ref{assumption:coef:2d}) can always be ensured
by a simple transformation
$u(x,y)=e^{tx}v(x,y)$ with a suitably chosen positive constant~$t$. 

The problem \eqref{cd:spp:parabolic} is of practical importance; for example, it can be used in modelling the flow past a surface with a no-slip boundary condition. Thus the numerical analysis of this problem can help our understanding of the behaviour of numerical methods in the presence of layers in more complex problems such as the linearised Navier-Stokes equations at high Reynolds number \cite{RST2008}. Compared to  2D problems whose solutions exhibit  only exponential layers (which are essentially one-dimensional in nature), the solution of problem \eqref{cd:spp:parabolic} has a much more complicated analytical structure: a regular exponential boundary layer, two characteristic (or parabolic) boundary layers and various types of corner layer functions in the vicinity of the inflow and outflow corners
 \cite{Kellogg:Stynes:2003:JDE,Kellogg:Stynes:2007:AML,StySty18}.

Standard numerical methods on quasi-uniform meshes do not produce satisfactory numerical approximations for singularly perturbed problems like \eqref{cd:spp:parabolic} unless the mesh diameter is comparable to the small parameter $\varepsilon$. Consequently layer-adapted meshes, such as Shishkin meshes, Bakhvalov meshes and their generalisations, have been developed; see \cite{Linss10} for an overview of these. On these meshes, uniform convergence is often observed \cite{Linss:Stynes:2001} and analysed \cite{Andreev2010,Brdar2021,Franz:Kellogg:Stynes2012,LiuZhang2018,Riordan2008}. 
Here and subsequently, \emph{``uniform" means that the bound on the error in the numerical solution is independent of the value of the singular perturbation parameter~$\varepsilon$.}

\subsection{Discontinuous Galerkin methods}\label{sec:dGmethods}

The standard Galerkin method is applied to problem \eqref{cd:spp:parabolic} in \cite{LiuZhang2018},
while stabilised finite element methods for the problem 
--- which reduce oscillations in computed solutions --- 
are considered in several papers, e.g., the 
streamline diffusion finite element method \cite{Brdar2021}, 
continuous interior penalty method \cite{Zhang:Stynes:2017}, 
local projection stabilization \cite{Franz2010}
and interior penalty discontinuous Galerkin method \cite{Zarin2005}. 
In these papers uniform convergence and uniform supercloseness results are derived, 
but with the exception of~\cite{Franz2010}, these results
are confined to low-order elements.

The local discontinuous Galerkin (LDG) method is a stabilised finite element method that was originally proposed for convection-diffusion systems \cite{Cockburn:Shu:LDG}. It is stabilised by adding only jump terms at element boundaries into the bilinear form. Its weak stability 
and local solvability are advantageous when solving problems with singularities (such as layers). Even on a uniform mesh, the LDG method applied to a singularly perturbed problem does not produce an oscillatory solution;  see the numerical experiments in~\cite{CYWL22,Xie2009JCM}. 

For 1D convection-diffusion problems, various uniform convergence results for the LDG solution were derived in \cite{Xie2010MC,Zhu:2018}. 
For 2D convection-diffusion problems that have only exponential boundary layers, when the LDG method is used to compute solutions on layer-adapted meshes, uniform convergence in an energy norm  has been established in \cite{Cheng2021:Calcolo,Cheng2020,Zhu:2dMC} and 
uniform supercloseness of the LDG solution is proved in the recent paper~\cite{Cheng:Jiang:Stynes:2022}.

\subsection{Theoretical difficulties with characteristic layers}\label{sec:charlayers}

Exponential layers are essentially one-dimensional in nature, but characteristic layers are intrinsically two-dimensional (see, e.g., \cite[Section~4.1]{StySty18}) and so are more difficult to analyse and to approximate numerically. Thus, in the numerical analysis literature one can find many papers that analyse the error when solving a convection-diffusion problem whose solution has exponential layers, but far fewer papers that perform an error analysis for the numerical solution of a problem with characteristic layers. Indeed, for discontinuous Galerkin methods in general, the only papers that derive a uniform error bound for characteristic layers seem to be those of Roos and Zarin, who use a nonsymmetric discontinuous Galerkin finite element method with interior penalties; see~\cite{Zarin2005} and its references. In a subsequent paper \cite{Roos2007}  by Roos and Zarin that also uses this method,  uniform supercloseness is proved for a problem with exponential layers, but the authors comment that ``We conjecture that the analysis can be extended to problems with characteristic layers but this requires a careful study of anisotropic elements in the region where these layers occur." The message here is that error analyses that work for exponential layers are not easily modified to work for characteristic layers.

Returning to the LDG method, despite the positive results for exponential layers that were described in Section~\ref{sec:dGmethods}, we are not aware of any uniform convergence result for this method applied on a layer-adapted mesh to solve a problem whose solution contains characteristic layers, even though it is 8 years since the exponential-layer uniform  error analysis of~\cite{Zhu:2dMC} appeared. \emph{Our paper will prove  uniform convergence of the LDG method for both exponential and characteristic layers --- in fact it goes further by deriving a higher-order  uniform supercloseness result for the error between the true solution and a projection of it into the finite element space.}
(It should be noted that uniform supercloseness cannot be obtained by following the line of analysis in \cite{Cheng2021:Calcolo,Cheng2020,Zhu:2dMC}, since these papers use a streamline-diffusion type norm that yields suboptimal estimates; our analysis below is based on a discrete energy norm that fits naturally with the LDG method.)

\subsection{Technical innovations in the analysis}\label{sec:tech}

As one can infer from the discussion of Section~\ref{sec:charlayers}, several technical innovations are needed to obtain a uniform error analysis for a problem such as \eqref{cd:spp:parabolic} whose solution contains characteristic boundary layers. 
\begin{itemize}
\item A fundamental difference between our error analysis and that of~\cite{Zhu:2dMC} (who consider only exponential layers) is that we use a unified Gauss-Radau projection of the solution~$u$ instead of a combination of standard Lagrange interpolation and Gauss-Radau projection. 
This choice enables us to treat the LDG error in the smooth component of the true solution in an optimal way,
and to avoid using inverse inequalities such as \cite[eqs.~(4.35) and (4.36)]{Zhu:2dMC} which lead to suboptimal results.

\item Our analysis does reuse some ideas from~\cite{Cheng:Jiang:Stynes:2022}, which considers only exponential layers, but an immediate significant difference is that our analysis here requires the crosswind stabilisation parameter $\lambda_2$ in the LDG method to be positive, while $\lambda_2=0$ was permitted in~\cite{Cheng:Jiang:Stynes:2022}.  
We also treat in a delicate way the error component $\mathcal{T}_4$ in~\eqref{error:equation} that is associated with the convection term $au_x$ of~\eqref{spp:pde},  by applying different techniques in different subregions: when the mesh is coarse in the convective direction, we mainly employ a superapproximation propery of the local bilinear form, but where the mesh is fine in the convective direction, we make full use of a crucial stability property ---  that the derivative and jump of the error in the finite element space are controlled by the energy-norm itself plus a term with optimal convergence rate. All of these devices, working together, yield our uniform supercloseness estimate for the approximation error in the finite element space.

\item The new challenge of characteristic layers means that special attention has to be paid to the treatment of various solution components on different parts of the domain, 
and new bounds have to be established for errors in approximating derivatives in the crosswind direction; see \eqref{etaq:bry:top}--\eqref{etaq:L2} of Lemma~\ref{lemma:GR2} and \eqref{sup:y} of Lemma~\ref{superapproximation:element}.  

\item Finally, it should be noted that our analysis includes finite elements with piecewise polynomials of any positive degree, whereas~\cite{Zarin2005} (the only previous uniform convergence result for a discontinuous Galerkin method applied to a problem with characteristic layers) is for bilinears only.
\end{itemize}

\subsection{Detailed results}\label{sec:detailedresults}

Our paper will analyse in detail the convergence behaviour of the 
LDG method on Shishkin meshes applied to the problem~\eqref{cd:spp:parabolic}. Using piecewise polynomials of degree at most $k$ (an arbitrary positive integer) in each coordinate variable, we shall prove that on a suitable Shishkin mesh, the LDG solution converges uniformly
to the true solution in the energy norm induced by the LDG bilinear form at the rate 
$O((N^{-1}\ln N)^{k+1/2})$, where
$N$ is the number of mesh intervals in each coordinate direction.
We also establish an enhanced energy-norm uniform convergence rate of $O((N^{-1}\ln N)^{k+1})$ for the difference between the numerical solution and the local Gauss-Radau projection of the true solution into the finite element space. 
This uniform supercloseness property implies that the $L^2$ error between the numerical and true solutions achieves 
the optimal uniform convergence rate  $O((N^{-1}\ln N)^{k+1})$. Numerical experiments show the sharpness of all these error bounds. 
 
 To obtain these high orders of approximation (recall that $k$ is any positive integer), it is necessary to assume that the true solution 
 of~\eqref{cd:spp:parabolic}  possesses a sufficient degree of regularity; see Section~\ref{sec:decomp}.  
This assumption is reasonable, given enough smoothness and compatibility of the problem data, 
but to derive it rigorously would demand a significant amount of extra analysis. 

To make the paper more readable and concise we have considered only Shishkin meshes in detail, but the entire analysis can be extended to  other families of layer-adapted meshes as we describe in Remark~\ref{Remark:B:BS:mesh}; see also the numerical results of Section~\ref{subsec:BS:B:mesh}. 

Our paper is organised as follows. In Section~\ref{sec:scheme}, 
we discuss a decomposition of the solution of~\eqref{cd:spp:parabolic},
construct the Shishkin mesh and define the LDG method.
Section~\ref{sec:GRproj} is devoted to the definition and properties of 
the local Gauss-Radau projection that we use, followed by a lengthy derivation of bounds on
various measures of the error between the true solution of~\eqref{cd:spp:parabolic} 
and its Gauss-Radau projection.
Then Section~\ref{sec:analysis} is the heart of the paper, where we carry out 
uniform convergence and uniform supercloseness analyses of the error in the LDG solution.   
In Section~\ref{sec:experiments},  
we present numerical experiments to demonstrate the sharpness of our theoretical error bounds. 
Finally,  Section~\ref{sec:conclusion} gives some concluding remarks.

\emph{Notation.} We use $C$ to denote a generic positive constant that may depend 
on the data $a, b,f$ of~\eqref{cd:spp:parabolic}, the parameters $\sigma,\delta$ of~\eqref{tau}, 
and the degree $k$ of the polynomials in our finite element space,  
but is independent of $\varepsilon$ and of $N$ (the number of mesh intervals in each coordinate direction); 
$C$ can take different values in different places.

The usual Sobolev spaces $W^{m,\ell}(D)$ and $L^{\ell}(D)$ will be used, 
where $D$ is any one-dimensional interval subset of $[0,1]$ or
any measurable two-dimensional subset of~$\Omega$. 
The $L^{2}(D)$ norm is denoted by $\norm{\cdot}_{D}$,
the $L^{\infty}(D)$ norm by $\norm{\cdot}_{L^\infty(D)}$,
and $\dual{\cdot}{\cdot}_D$ denotes the $L^2(D)$ inner product.
The subscript $D$ will always be dropped when $D= \Omega$.
We set $\partial_x^i\partial_y^j:=\frac{\partial^{i+j}}{\partial x^i \partial y^j}$ for all nonnegative integers $i$ and $j$.

\section{Solution decomposition, Shishkin mesh and LDG method}
\label{sec:scheme}
In this section we assemble the basic tools for the construction and analysis of our numerical method. 

\subsection{Solution decomposition}\label{sec:decomp}
Typical solutions $u$ of \eqref{cd:spp:parabolic} have 
an exponential layer along the side $x=1$
and characteristic layers along the two sides $y=0$ and $y=1$;
see, e.g., \cite[Example 4.2]{StySty18}. A solution may also have a corner layer
at each corner of~$\Omega$. In \cite{Kellogg:Stynes:2003:JDE,Kellogg:Stynes:2007:AML}
the case of constant $a$ and $b$ is fully analysed and a decomposition of 
the solution $u$ into a smooth component plus layers of various types is constructed, 
together with bounds on their derivatives of all orders.
In \cite{Riordan2008} the case of variable $a(x,y)$ with $b\equiv 0$ is discussed
under corner compatibility assumptions that essentially exclude the corner layers at the inflow corners 
$(0,0)$ and $(0,1)$, and $u$ is again decomposed into a sum of a smooth component, 
an exponential boundary layer along $x=1$, two parabolic layers along $y=0$ and $y=1$, and
two corner layers at the outflow corners $(1,0)$ and $(1,1)$, 
for each of which bounds on certain low-order derivatives are obtained.

It is an open question whether one can decompose the solution $u$ of \eqref{cd:spp:parabolic} 
in a similar way together with bounds on high-order derivatives of each component in the decomposition
(this would extend the work described in the previous paragraph). 
It seems reasonable that this is indeed the case, so like \cite[Assumption 2.1]{Brdar2021} 
and \cite[Part III: Section 1.4]{RST2008}
we shall now assume that  this decomposition is possible. 

\begin{assumption}\label{assumption:reg:2d}
	Let $m$ be a non-negative integer. Let $\kappa$ satisfy $0<\kappa<1$.
	Under suitable smoothness and compatibility conditions on the data,
	the solution $u$ of \eqref{cd:spp:parabolic} 
	lies in the H\"older space $C^{m+2,\kappa}(\Omega)$ 
	and can be decomposed as 	$u = u_0 +u_{1}+u_{2}+u_{12}$, where
	$u_0$ is the smooth component, $u_1$ is the exponential layer, $u_2$ is
	the sum of the two parabolic layers, and $u_{12}$ is the sum of the two outflow corner layers. 
	The derivatives of each of these components satisfy the following bounds 
	for all $(x,y)\in \bar{\Omega}$ and all nonnegative integers $i,j$ with $i+j\le m+2$: 
	\begin{subequations}\label{reg:u:2d}
		\begin{align}
		\label{reg:u0}
		\left| \partial_x^{i}\partial_y^{j}u_0(x,y)\right|
		\leq&\; C,
		\\
		\label{reg:u1}
		\left| \partial_x^{i}\partial_y^{j}u_{1}(x,y)\right|
		\leq &\; C\varepsilon^{-i} e^{-\alpha(1-x)/\varepsilon},
		\\
		\label{reg:u2}
		\left| \partial_x^{i}\partial_y^{j}u_{2}(x,y)\right|
		\leq &\; C\varepsilon^{-j/2} \left(e^{-\delta y/\ssq}
		+e^{-\delta(1-y)/\ssq}\right),
		\\
		\label{reg:u12}
		\left| \partial_x^{i}\partial_y^{j}u_{12}(x,y)\right|
		\leq &\; C\varepsilon^{-(i+j/2)}e^{-\alpha(1-x)/\varepsilon}
		\left(e^{-\delta y/\ssq}+e^{-\delta(1-y)/\ssq}\right),
		\end{align}
	\end{subequations}
	where $C>0$ and $\delta>0$ are some constants.
\end{assumption}

\begin{remark} 
In the numerical analysis that follows, we shall need $m=k$ in Assumption~\ref{assumption:reg:2d}, 
where $k$ is the degree of the piecewise polynomials in our finite element space.  
\end{remark} 

\subsection{The Shishkin mesh}
\label{subsec:layer:adapted:meshes}

We shall use a piecewise uniform Shishkin mesh
\cite{Linss10,StySty18} that is refined near the sides $x=1$, $y=0$ and $y=1$ of~$\Omega$. 
Define the \emph{mesh transition parameters} as
\begin{equation}  \label{tau}
\tau_1 := \min\left\{\frac12, 
\frac{\sigma\varepsilon}{\alpha} \ln N\right\}
\quad \text{and} \quad
\tau_2 := \min\left\{\frac14, 
\frac{\sigma\ssq}{\delta} \ln N
\right\},
\end{equation}
where $\sigma>0$ is a user-chosen parameter 
whose value affects our error estimates; 
in the error analysis
it will be seen that~$\sigma$ needs to be sufficiently large. 
Assume in~\eqref{tau}  that 
\begin{equation}\label{tau:used}
\tau_1 = \frac{\sigma\varepsilon}{\alpha} \ln N
\leq \frac12
\text{   and   }
\tau_2 = \frac{\sigma\ssq}{\delta} \ln N 
\leq \frac14,
\end{equation}
as is typically the case for \eqref{cd:spp:parabolic}.
Note that \eqref{tau:used} implies a mild assumption $\ssq\ln N\leq C$ 
which will be used frequently in the following analysis.
(We remark that the stronger inequality $\varepsilon \leq N^{-1}$ is used in \cite[eq.~(4.35)]{Zhu:2dMC}.)

Let $N\geq 4$ be an integer divisible by 4. 
Our meshes will use $N+1$ points in each coordinate direction.
Define the mesh points $(x_i,y_j)$ for $i,j=0,1,\dots, N$ by
\begin{equation}\label{mesh:points}
\begin{split}
x_i&:=
\begin{dcases}
2(1-\tau_1)\frac{i}{N}   &\qquad\text{for } i=0,1,...,N/2,
\\
1-2\tau_1\left(1-\frac{i}{N}\right)
&\qquad\text{for } i=N/2+1,N/2+2,...,N,
\end{dcases}
\\
y_j&:=
\begin{dcases}
4\tau_2\frac{j}{N}   
&\text{for } j=0,1,...,N/4,
\\
\tau_2 + 2(1-2\tau_2)\left(\frac{j}{N}-\frac14\right)
&\text{for } j=N/4+1,N/4+2,...,3N/4,
\\
1-4\tau_2\left(1-\frac{j}{N}\right)
&\text{for } j=3N/4+1,3N/4+2,...,N.
\end{dcases}
\end{split}
\end{equation}

The Shishkin mesh $\Omega_N$ is then constructed by drawing axiparallel lines through the mesh points $(x_i,y_j)$,
i.e., set $\Omega_N:=\{K_{ij}\}_{i, j=1,\dots,N}$,
where each rectangular mesh element 
$K_{ij} :=I_i\times J_j :=(x_{i-1},x_i)\times (y_{j-1},y_j)$.

Figure \ref{fig:division} displays the mesh for 
$\varepsilon = 10^{-2}$, $N=8$, $\sigma/\alpha=4$ and $\sigma/\delta=1$
in \eqref{tau:used} and~\eqref{mesh:points};
it is uniform and coarse on $\Omega_{11}:=[0,1-\tau_1]\times[\tau_2,1-\tau_2]$, 
but is refined in the characteristic layer region
$\Omega_{12}:=[0,1-\tau_1]\times([0,\tau_2]\cup [1-\tau_2,1])$,
the  exponential layer region
$\Omega_{21}:=[1-\tau_1,1]\times[\tau_2,1-\tau_2]$,
and the corner layer region
$\Omega_{22}:=[1-\tau_1,1]\times([0,\tau_2]\cup [1-\tau_2,1])$.

Set $h_{x,i} = x_i-x_{i-1}$
and $h_{y,j} =  y_j-y_{j-1}$ for $i,j=1,2,\dots, N$. 
Then
\begin{equation}\label{mesh:size:x:y}
\begin{split}
h_{x,i}
&:=
\begin{cases}
\frac{2(1-\tau_1)}{N}=O(N^{-1}) 
&\text{for }i=1,2,\dots,N/2,\\
\frac{2\tau_1}{N}=O(\sq N^{-1}\ln N) 
&\text{for }  i=N/2+1,N/2+2,\dots,N,
\end{cases}
\\
h_{y,j}
&:=
\begin{cases}
\frac{2(1-2\tau_2)}{N}=O(N^{-1}) 
&\text{for } j=N/4+1,\dots,3N/4,\\
\frac{4\tau_2}{N}=O(\ssq N^{-1}\ln N)
&\text{for }  j=1,\dots,N/4\text{ and } j =3N/4+1,\dots,N.
\end{cases}
\end{split}
\end{equation}
These mesh sizes will be used frequently throughout our  analysis.

\begin{figure}[h]
	\begin{minipage}{0.495\linewidth}
		\centerline{\includegraphics[width=1.35\textwidth]{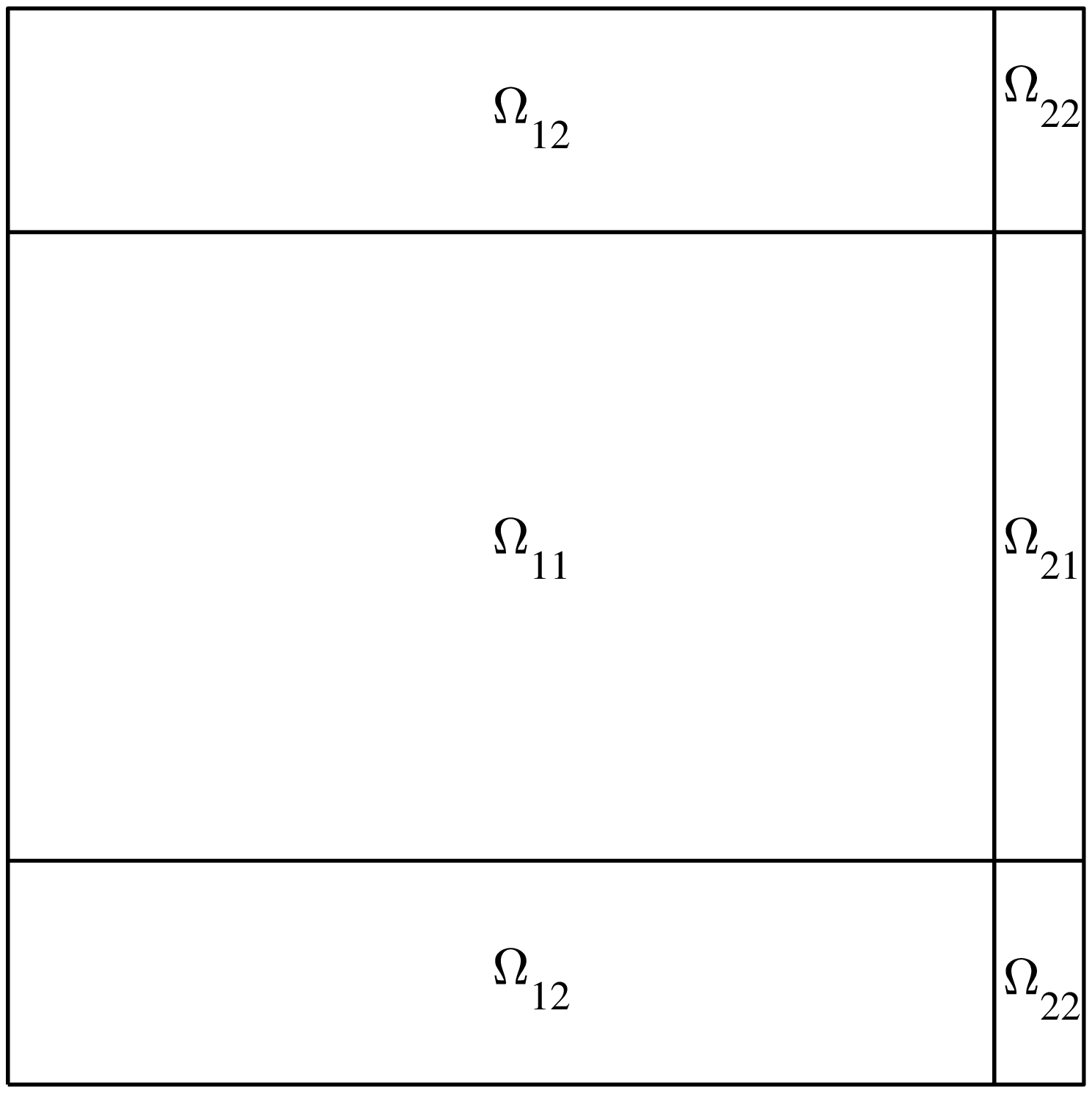}}
	\end{minipage}
    \begin{minipage}{0.495\linewidth}
		\centerline{\includegraphics[width=1.35\textwidth]{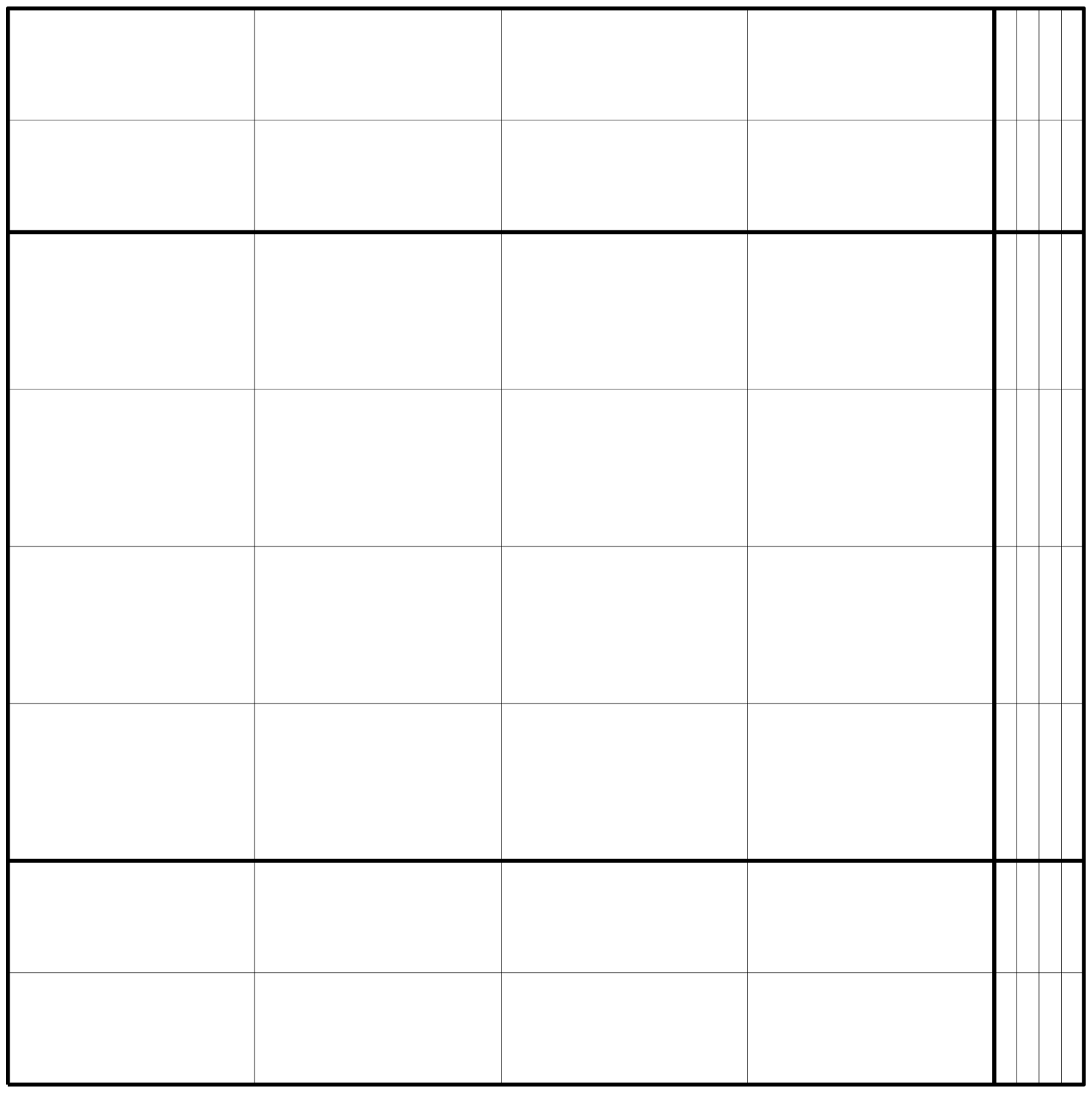}}
	\end{minipage}
	\caption{Domain division (left) and Shishkin mesh with $N=8$ (right)}.
	\label{fig:division}
\end{figure}

\subsection{The local discontinuous Galerkin (LDG) method}\label{sec:LDG}

Let $k$ be a fixed positive integer. On any 1-dimensional interval~$I$, 
let  $\mathcal{P}^{k}(I)$ denote the space of polynomials of degree at most~$k$ defined on~$I$.  
For each mesh element $K =  I_i \times J_j$, 
set $\mathcal{Q}^k(K) := \mathcal{P}^{k}(I_i)\otimes \mathcal{P}^{k}(J_j)$.
Then define the discontinuous finite element space  
\[
\spc= \left\{v\in L^2 (\Omega)\colon
v|_{K} \in \mathcal{Q}^{k} (K),  K\in\Omega_N\right\}.
\]
Note that functions in $\spc$ are allowed to 
be discontinuous across element interfaces.
For any $v\in \spc$ and $y\in J_j$, for $i=0,1,\dots,N$
we use $v^\pm_{i,y}=\lim_{x\to x_{i}^\pm}v(x,y)$
to denote the traces on vertical element edges; 
here in particular we set
$v^{-}_{0,y}=v^{+}_{N,y}=0$ to avoid treating $\partial\Omega$ as a special case.
The jumps on these edges are denoted by
$\jump{v}_{i,y}:= v^{+}_{i,y}-v^{-}_{i,y}$
for $i=0,1,\dots,N$; thus $\jump{v}_{0,y}:=v^{+}_{0,y}$
and $\jump{v}_{N,y}:=-v^{-}_{N,y}$.
In a similar fashion, define the traces $v^\pm_{x,j}$ and the jumps
$\jump{v}_{x,j}$ on the horizontal element edges for $j=0,1,\dots,N$.

To define the LDG method, rewrite \eqref{cd:spp:parabolic} as an  equivalent first-order system:
\[
-p_x-q_y+a u_x +bu=f,\quad p=\sq u_x,\quad q=\sq u_y
\]
with the homogeneous boundary condition of~\eqref{spp:bc}.
Then the final compact form of the LDG method reads as follows \cite{Cheng2020}:\\
\indent Find $\wN=(\uN,\pN,\qN)\in \spc^3 := \spc\times\spc\times\spc$ 
($U$ approximates~$u$, while $\pN$ and $\qN$ approximate $p$ and $q$  respectively)
such that
\begin{equation}\label{compact:form:2d}
	\Bln{\wN}{\vph}=\dual{f}{\vphu}
	\ \forall \vph=(\vphu,\vphp,\vphq)\in \spc^3,
\end{equation}
where 
\begin{align}
\label{B:def:2d}
B(\wN;\vph) &:=
\mathcal{T}_1(\wN;\vph)+\mathcal{T}_2(\uN;\vph)
+\mathcal{T}_3(\wN;\vphu)+\mathcal{T}_4(\uN;\vphu),
\end{align}
with
\begin{align}
\mathcal{T}_1(\wN;\vph)&=
\varepsilon^{-1}[\dual{\pN}{\vphp}+\dual{\qN}{\vphq}]
+\dual{(b- a_x)\uN}{\vphu},
\nonumber\\
\mathcal{T}_2(\uN;\vph)&=
\dual{ \uN}{\vphp_x}
+\sum_{j=1}^{N}\sum_{i=1}^{N-1}\dual{\uN^{-}_{i,y}}{\jump{\vphp}_{i,y}}_{J_j}
+\dual{ \uN}{\vphq_y}
+\sum_{i=1}^{N}\sum_{j=1}^{N-1}\dual{\uN^{-}_{x,j}}{\jump{\vphq}_{x,j}}_{I_i},
\nonumber\\
\mathcal{T}_3(\wN;\vphu)&=
\dual{\pN}{\vphu_x}
+\sum_{j=1}^{N}\left[
\sum_{i=0}^{N-1}\dual{\pN^{+}_{i,y}}{\jump{\vphu}_{i,y}}_{J_j}
-\dual{\pN^{-}_{N,y}}{\vphu^{-}_{N,y}}_{J_j}
\right]
\nonumber\\
&\qquad+\dual{\qN}{\vphu_y}
+\sum_{i=1}^{N}\left[
\sum_{j=0}^{N-1}\dual{\qN^{+}_{x,j}}{\jump{\vphu}_{x,j}}_{I_i}
-\dual{\qN^{-}_{x,N}}{\vphu^{-}_{x,N}}_{I_i}
\right],
\nonumber\\
\mathcal{T}_4(\uN;\vphu)&=
-\dual{a\uN}{\vphu_x}
-\sum_{j=1}^{N}\sum_{i=1}^{N}
\dual{a_{i,y}\uN^{-}_{i,y}}{\jump{\vphu}_{i,y}}_{J_j}
+\sum_{j=1}^{N}\dual{\lambda_{1}\uN^{-}_{N,y}}{\vphu^{-}_{N,y}}_{J_j} \notag\\
&\qquad+\sum_{i=1}^{N}\dual{\lambda_{2}\uN^{-}_{x,N}}{\vphu^{-}_{x,N}}_{I_i}
.
\nonumber
\end{align}
In this definition we choose the penalty parameters $\lambda_{1}$ 
and $\lambda_{2}$ to satisfy $0\leq \lambda_{1} \leq C$ and 
$C_1\varepsilon \leq \lambda_{2} \leq C$
for some constants $C_1$ and $C$;  
these parameters improve the stability and accuracy of the numerical scheme.
An explanation for the restriction $\lambda_{2}\ge C_1\varepsilon $
will be given later, following~\eqref{T3}.

Define the energy norm $\enorm{\cdot}$ 
by $\enorm{\vph}^2=B(\vph;\vph)$ for each $\vph = (\vphu,\vphp,\vphq)\in \spc^3$; that is, 
\begin{align*}
\enorm{\vph}^2
&=\lnorm{\vph}^2
+\sum_{j=1}^{N}
\sum_{i=0}^{N}\frac12 \dual{a_{i,y}}{\jump{\vphu}^2_{i,y}}_{J_j}
+\sum_{j=1}^{N}
\dual{\lambda_{1}}{\jump{\vphu}^2_{N,y}}_{J_j}
+\sum_{i=1}^{N}
\dual{\lambda_{2}}{\jump{\vphu}^2_{x,N}}_{I_i}
,
\\
&\text{where } \lnorm{\vph}^2
:=
\varepsilon^{-1}\norm{\vphp}^2+\varepsilon^{-1}\norm{\vphq}^2 
+ \norm{ \left(b- \frac12 a_x \right)^{1/2}\vphu}^2.\nonumber
\end{align*}
The linear system of equations \eqref{compact:form:2d} has a unique solution  
$\wN$ because the associated homogeneous problem (i.e., with $f=0$) has $\enorm{\wN} =0$ 
and hence $\wN = (0,0,0)$.

\section{The Gauss-Radau projection and its associated error}
\label{sec:GRproj}

\subsection{Definition and properties of the Gauss-Radau projection}
We define three two-dimensional local Gauss-Radau projectors $\Pi^{-}, \Pi_x^{+}, \Pi_y^{+}$ into $\spc$. 
For each $z\in H^2(\Omega_N)$,  
define $\Pi^{-}z \in \spc$ by the conditions:
\begin{align*}
\int_{K_{ij}}(\Pi^{-}z) \vphu \,\textrm{d}x\,\textrm{d}y
&= \int_{K_{ij}}z \vphu \,\textrm{d}x\,\textrm{d}y
\quad \forall \vphu\in \mathcal{Q}^{k-1}(K_{ij}),
\\
\int_{J_j}(\Pi^{-} z)_{i,y}^{-}\vphu\,\textrm{d}y
&=\int_{J_j} z_{i,y}^{-}\vphu\,\textrm{d}y
\qquad \forall \vphu\in \mathcal{P}^{k-1}(J_j),
\\
\int_{I_i}(\Pi^{-} z)_{x,j}^{-}\vphu\,\textrm{d}x
&=\int_{I_i} z_{x,j}^{-}\vphu\,\textrm{d}x
\qquad \forall \vphu\in \mathcal{P}^{k-1}(I_i),
\\
(\Pi^{-} z)(x_{i}^{-},y_{j}^{-}) &=	 z(x_{i}^{-},y_{j}^{-})
\end{align*}
for all elements $K_{ij}=I_i\times J_j=(x_{i-1},x_i)\times (y_{j-1},y_j)$ in $\Omega_{N}$,
where $z_{i,y}^{-}$ and $z_{x,j}^{-}$ are the edge traces
defined in Section~\ref{sec:LDG}.

For each $z\in H^1(\Omega_N)$,  define $\Pi_x^{+}z \in \spc$ by
\begin{subequations}\label{GR:p}
\begin{align}
\int_{K_{ij}}(\Pi_x^{+} z) \vphu
\,\textrm{d}x\,\textrm{d}y
&=
\int_{K_{ij}}z \vphu \,\textrm{d}x\,\textrm{d}y
\quad \forall \vphu\in \mathcal{P}^{k-1}(I_i)\otimes \mathcal{P}^k(J_j),
\\
\int_{J_j}(\Pi_x^{+} z)_{i-1,y}^{+}\vphu\,\textrm{d}y
&=
\int_{J_j}  z_{i-1,y}^{+}\vphu\,\textrm{d}y
\qquad \forall \vphu\in \mathcal{P}^k(J_j)
\end{align}
\end{subequations}
for all $K_{ij}\in\Omega_N$.
Analogously, for each $z\in H^1(\Omega_N)$,
define  $\Pi_y^{+}z \in \spc$ by
\begin{subequations}\label{GR:q}
\begin{align}
\int_{K_{ij}}(\Pi_y^{+}  z) \vphu\,\textrm{d}x\,\textrm{d}y
&=
\int_{K_{ij}}z \vphu \,\textrm{d}x\,\textrm{d}y
\quad \forall \vphu\in \mathcal{P}^{k}(I_i)\otimes \mathcal{P}^{k-1}(J_j),
\\
\int_{I_i}(\Pi_y^{+} z)^{+}_{x,j-1}\vphu\,\textrm{d}x
&=
\int_{I_i}  z^{+}_{x,j-1}\vphu\,\textrm{d}x
\qquad \forall \vphu\in \mathcal{P}^k(I_i)
\end{align}
\end{subequations}
for all $K_{ij}\in\Omega_N$.

Then
\begin{equation}\label{GR:tensor:product}
\Pi^{-}=\pi_x^{-}\otimes \pi_y^{-},
\quad
\Pi_x^{+}=\pi_x^{+}\otimes \pi_y,
\quad
\Pi_y^{+}=\pi_x\otimes \pi_y^{+},
\end{equation}
where $\pi$, $\pi_x^{-}$ and $\pi_y^{-}$
are the one-dimensional local $L^2$ projector and
the one-dimensional Gauss-Radau projectors  
in the $x$- and $y$-directions respectively
that are defined in~\cite[Section~3.1]{Castillo2002}.

Let  $\Pi\in\{\Pi^{-},\Pi_x^{+},\Pi_y^{+}\}$.
The following stability inequalities on each $K_{ij}$
can be deduced directly from the above definitions
(see, e.g.,  \cite[proof of Lemma 5]{Cheng2021:Calcolo}):
\begin{subequations}\label{2GR:stb}
	\begin{align}
	\label{GR:stb}
	\norm{\Pi z}_{L^\infty(K_{ij})}
	&\leq  C \norm{z}_{L^\infty(K_{ij})},
\\
	\label{GR:stb:L2:p}
	\norm{\Pi^+_x z}_{K_{ij}}&\leq  C
	\Big[
	\norm{z}_{K_{ij}} 
	+h^{1/2}_{x,i}\norm{z^+_{i-1,y}}_{J_j}
	\Big],
	\\
	\label{GR:stb:L2:q}
	\norm{\Pi^+_y z}_{K_{ij}}&\leq  C
	\Big[
	\norm{z}_{K_{ij}} + h^{1/2}_{y,j}\norm{z^+_{x,j-1}}_{I_i}
	\Big].
	\end{align}
\end{subequations}
One also has the anisotropic approximation property 
(\cite[Lemma 3]{Cheng2021:Calcolo}, \cite[Lemma 4.3]{Zhu:2dMC}):
	\begin{align}\label{2GR:app}
	\norm{z-\Pi z}_{L^{\ell}(K_{ij})}
	&\leq 
	C \left[h_{x,i}^{k+1}\norm{\partial_x^{k+1}z}_{L^{\ell}(K_{ij})}
	+h_{y,j}^{k+1}\norm{\partial_y^{k+1}z}_{L^{\ell}(K_{ij})}
	\right]\quad \text{for }\ell=2,\infty.
	\end{align}

%
%
\subsection{Gauss-Radau projection error of the true solution}\label{sec:GRtrue}

Set $\bm w := (u,p,q)$ and $\bm\Pi \bm w := (\prou u,\prop p,\proq q)$. The error in the Gauss-Radau projection of $\bm w$ is  
\[
\bm \eta  = (\eta_u,\eta_p,\eta_q) := (u-\prou u, p-\prop p, q-\proq q) = \bm w - \bm\Pi \bm w.
\]

To estimate $\bm\eta$ we make the following assumptions.
\begin{assumption}\label{ass:2}
	\mbox{ }
	\begin{itemize}
		\item[(i)]
		Assumption~\ref{assumption:reg:2d} is valid for $m=k$.
		\item[(ii)]
		Choose $\sigma\geq k+1$ in \eqref{tau}.
	\end{itemize}
\end{assumption}

In the next two lemmas we derive bounds on the components of $\bm\eta$.

\begin{lemma}\label{lemma:GR1}[Bounds on $\eta_u$]
There exists a constant $C>0$ such that
	\begin{subequations}\label{2dGR:property1}
		\begin{align}
		\label{etau:L2}
        \norm{\eta_u}
        &\leq C\left[N^{-(k+1)}+ \sq^{1/4} (N^{-1}\ln N)^{k+1}\right],
		\\
		\label{etau:L2:layer}
		\norm{\eta_u}_{\Omega_{21}\cup \Omega_{22}}
		&\leq 
		C\sq^{1/2} (N^{-1}\ln N)^{k+1},
		\\
		\label{etau:bry:left:x}
		\left(\sum_{j=1}^{N}\norm{(\eta_u)_{i,y}^{-}}^2_{J_j}\right)^{1/2}
		&\leq
		C\left[N^{-(k+1)}+\sq^{1/4}(N^{-1}\ln N)^{k+1}\right]
		\text{ for } i=1,...,N,
		\\
		\label{etau:bry:right}
		\left(\sum_{i=1}^{N}\norm{(\eta_u)_{x,N}^{-}}^2_{I_i}\right)^{1/2}
		&\leq  C\Big[N^{-(k+1)}+\sq^{1/2}(N^{-1}\ln N)^{k+1}\Big],
		\\
		\label{etau:bry:left}
		\left(\sum_{i=1}^{N}\sum_{j=1}^{N}
		\norm{(\eta_u)_{i,y}^{-}}_{J_j}^2\right)^{1/2}
		&\leq  
		C(N^{-1}\ln N)^{k+1/2},
		\\
		\label{etau:jump}
		\left(\sum_{j=1}^{N}\sum_{i=0}^{N}
		\dual{1}{\jump{\eta_u}^2_{i,y}}_{J_j}\right)^{1/2}
		&\leq  C(N^{-1}\ln N)^{k+1/2}.
		\end{align}
	\end{subequations}
\end{lemma}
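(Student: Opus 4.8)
The plan is to exploit the linearity of the projector and write $\eta_u = \eta_{u_0}+\eta_{u_1}+\eta_{u_2}+\eta_{u_{12}}$, where $\eta_{u_c} := u_c - \prou u_c$, and then estimate each piece separately using the decomposition bounds \eqref{reg:u:2d}, treating coarse-mesh and fine-mesh regions differently. In every coarse-mesh region the relevant layer factor $e^{-\alpha(1-x)/\sq}$ or $e^{-\delta y/\ssq}$ is at most $e^{-\sigma\ln N}=N^{-\sigma}\le N^{-(k+1)}$, because Assumption~\ref{ass:2}(ii) gives $\sigma\ge k+1$; in every fine-mesh region the mesh factor $h^{k+1}$ supplied by the anisotropic approximation property \eqref{2GR:app} cancels exactly the $\sq$-blow-up of the corresponding derivative in \eqref{reg:u1}--\eqref{reg:u12}, while the $L^2$ mass of the layer profile contributes a power of $\sq$ (namely $\ssq$ from an $x$-integral across the exponential layer and $\sq^{1/4}$ from a $y$-integral across a characteristic layer).

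For the two volume estimates \eqref{etau:L2} and \eqref{etau:L2:layer} I would apply \eqref{2GR:app} on each $K_{ij}$, insert \eqref{reg:u:2d} and the mesh sizes \eqref{mesh:size:x:y}, and sum. The smooth part $u_0$ produces the $N^{-(k+1)}$ term; the characteristic layers $u_2$ produce the $\sq^{1/4}(N^{-1}\ln N)^{k+1}$ term in \eqref{etau:L2} through the $y$-derivative contribution $h_{y,j}^{k+1}\sq^{-(k+1)/2}$ combined with $\big(\int_0^{\tau_2}e^{-2\delta y/\ssq}\,\mathrm dy\big)^{1/2}=O(\sq^{1/4})$; the exponential and corner layers are subsumed. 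For \eqref{etau:L2:layer} the domain is the fine-in-$x$ strip $\Omega_{21}\cup\Omega_{22}$, where the dominant contribution is the $x$-derivative of $u_1$, giving $(N^{-1}\ln N)^{k+1}\big(\int_{1-\tau_1}^{1}e^{-2\alpha(1-x)/\sq}\,\mathrm dx\big)^{1/2}=O(\ssq(N^{-1}\ln N)^{k+1})$.

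The edge estimates rest on the tensor-product identity \eqref{GR:tensor:product} together with the fact that the one-dimensional Gauss-Radau projector reproduces the value at the right endpoint of each interval. Hence at a vertical edge $x=x_i$ the left trace collapses to a purely one-dimensional $y$-projection, $(\eta_u)^-_{i,y}=u(x_i^-,\cdot)-\pi_y^-[u(x_i^-,\cdot)]$, so that $\norm{(\eta_u)^-_{i,y}}_{J_j}\le C h_{y,j}^{k+1}\norm{\partial_y^{k+1}u(x_i^-,\cdot)}_{J_j}$; similarly at $y=y_N$ the bottom trace reduces to a one-dimensional $x$-projection. Bound \eqref{etau:bry:left:x} then follows by summing over $j$ in a single column exactly as in the volume estimates, and \eqref{etau:bry:right} follows by the analogous $x$-projection argument applied componentwise along $y=1$. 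For the global sum \eqref{etau:bry:left} the same reduction applies, but now the $x$-summation carries no mesh weight, so one accumulates a factor equal to the number of $x$-nodes; the loss is controlled using $\sum_i e^{-2\alpha(1-x_i)/\sq}=O(N/\ln N)$ for the $x$-layer factors and the mild bound $\ssq\ln N\le C$ that follows from \eqref{tau:used} to absorb the residual $\sq^{1/4}(\ln N)^{1/2}$, which together downgrade the rate by exactly half an order to $(N^{-1}\ln N)^{k+1/2}$.

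For the jump estimate \eqref{etau:jump} I would use the continuity of $u$ to write $\jump{\eta_u}_{i,y}=(\eta_u)^+_{i,y}-(\eta_u)^-_{i,y}$, bound the left trace as in \eqref{etau:bry:left}, and bound the right trace $(\eta_u)^+_{i,y}$ --- which sits at the left endpoint of $I_{i+1}$, where the Gauss-Radau projector does \emph{not} reproduce the value and the one-dimensional reduction fails --- through the $L^\infty$ form of \eqref{2GR:app}, converting the edge $L^\infty$ bound to an $L^2(J_j)$ bound by the factor $h_{y,j}^{1/2}$; the summation then parallels that of \eqref{etau:bry:left}. \textbf{The main obstacle} is precisely these two global sums \eqref{etau:bry:left} and \eqref{etau:jump}: one must track the half-order loss caused by the unweighted edge summation, keep the geometric $x$-node sums and the extra $(\ln N)^{1/2}$ and $\sq^{1/4}$ factors under control via $\ssq\ln N\le C$, and handle the structural asymmetry of $\prou$ (endpoint reproduction at the right/top but not the left/bottom edge), which forces two genuinely different arguments for the $\pm$ traces.
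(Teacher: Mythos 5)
Your proposal follows essentially the same route as the paper's proof: the same component-wise decomposition $\eta_u=\eta_{u_0}+\eta_{u_1}+\eta_{u_2}+\eta_{u_{12}}$, the same coarse/fine dichotomy ($L^\infty$-stability of the projector where the layer is already $O(N^{-\sigma})$, the anisotropic approximation property \eqref{2GR:app} in the refined regions), the same tensor-product reduction of the trace $(\eta_u)^-_{i,y}$ to a one-dimensional Gauss--Radau error in $y$, and the same accounting of the half-order loss in \eqref{etau:bry:left} and \eqref{etau:jump} via the unweighted $x$-summation, the geometric sum $\sum_i e^{-2\alpha(1-x_i)/\sq}=O(N/\ln N)$ and the bound $\ssq\ln N\le C$. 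The only deviations are cosmetic (the paper bounds both traces in \eqref{etau:jump} by $h_{y,j}^{1/2}\norm{\eta_u}_{L^\infty(K_{ij})}$ rather than treating the $\pm$ traces asymmetrically, and obtains \eqref{etau:bry:left} by simply multiplying the uniform column bound \eqref{etau:bry:left:x} by $N$), so the argument is correct as proposed.
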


\begin{proof}

Recall the decomposition of~$u$ in Assumption~\ref{assumption:reg:2d}. 
For each component~$u_z$ of~$u$, set
$\eta_{u_z}:= u_z - \Pi^-u_z$.
Then $\eta_u = \eta_{u_0} +\eta_{u_{1}}+\eta_{u_{2}}+ \eta_{u_{12}}$.

\emph{Proof of \eqref{etau:L2} and \eqref{etau:L2:layer}:}  By using \eqref{2GR:app}, \eqref{reg:u0} 
and the measure of each subregion $\Omega_{ij}$, one obtains easily 
\begin{align*}
\norm{\eta_{u_0}}_{\Omega_{11}}&\leq CN^{-(k+1)},
\hspace{2.5cm}
\norm{\eta_{u_0}}_{\Omega_{12}}\leq C\sq^{1/4} N^{-(k+1)} (\ln N)^{1/2} ,
\\
\norm{\eta_{u_0}}_{\Omega_{21}}&\leq C\sq^{1/2} N^{-(k+1)} (\ln N)^{1/2},
\quad
\norm{\eta_{u_0}}_{\Omega_{22}}\leq C\sq^{3/4} N^{-(k+1)} \ln N .
\end{align*}

For the exponential layer component $u_1$,
 the $L^{\infty}$-stability property \eqref{GR:stb} and \eqref{reg:u1}
yield
\begin{align}\label{eta:u1:Linf}
\norm{\eta_{u_1}}_{L^{\infty}(K_{ij})}
&\leq C \norm{u_1}_{L^{\infty}(K_{ij})}
\leq C e^{-\alpha(1-x_{i})/\sq}
\leq C e^{-\alpha\tau_1/\sq}
\leq C N^{-\sigma}
\end{align}
for each $K_{ij}\in \Omega_{11}\cup \Omega_{12}$. Hence
\[
\norm{\eta_{u_{1}}}_{\Omega_{11}\cup \Omega_{12}}
\leq \norm{\eta_{u_1}}_{L^{\infty}(\Omega_{11}\cup \Omega_{12})}
\leq C N^{-\sigma}.
\]
If $K_{ij}\in \Omega_{21}\cup \Omega_{22}$,
from \eqref{2GR:app} with $\ell=2$ and \eqref{reg:u1} one has
\begin{align*}
\norm{\eta_{u_{1}}}^2_{\Omega_{21}\cup \Omega_{22}}
&
\leq C\sum_{K_{ij}\in \Omega_{21}\cup \Omega_{22}}
\left[
h_{x,i}^{2(k+1)}\norm{\partial_x^{k+1}u_{1}}^2_{K_{ij}}
+h_{y,j}^{2(k+1)}\norm{\partial_y^{k+1}u_{1}}^2_{K_{ij}}
\right]
\\
&\leq C(N^{-1}\ln N)^{2(k+1)}\sum_{K_{ij}\in \Omega_{21}\cup \Omega_{22}}
\norm{e^{-\alpha(1-x)/\sq}}^2_{K_{ij}}
\leq C \sq (N^{-1}\ln N)^{2(k+1)}.\nonumber
\end{align*}

For the characteristic layer component $u_2$,
use \eqref{GR:stb} and \eqref{reg:u2}
to get 
\begin{align*}
\norm{\eta_{u_2}}_{L^{\infty}(K_{ij})}
&\leq C \norm{u_2}_{L^{\infty}(K_{ij})}
\leq C\left(e^{-\delta y_{j-1}/\ssq}+e^{-\delta(1-y_{j})/\ssq}\right)
\leq Ce^{-\delta \tau_2/\ssq}
\leq C N^{-\sigma}
\end{align*}
for each $K_{ij}\in \Omega_{11}\cup \Omega_{21}$. This bound implies that
\[
\norm{\eta_{u_{2}}}_{\Omega_{11}}\leq C N^{-\sigma}
\text{ and }
\norm{\eta_{u_{2}}}_{\Omega_{21}}
\leq C \tau_1^{1/2} N^{-\sigma}
\leq C \sq^{1/2} N^{-\sigma} (\ln N)^{1/2}.
\]
If $K_{ij}\in \Omega_{12}\cup \Omega_{22}$,
we use \eqref{2GR:app} with $\ell=2$ and \eqref{reg:u2}, obtaining
\begin{align*}
\norm{\eta_{u_{2}}}^2_{\Omega_{12}}
&
\leq C\sum_{K_{ij}\in \Omega_{12}}
\left[
h_{x,i}^{2(k+1)}\norm{\partial_x^{k+1}u_{2}}^2_{K_{ij}}
+h_{y,j}^{2(k+1)}\norm{\partial_y^{k+1}u_{2}}^2_{K_{ij}}
\right]
\nonumber\\
&\leq C(N^{-1}\ln N)^{2(k+1)}\sum_{K_{ij}\in \Omega_{12}}
\norm{e^{-\delta y/\ssq}+e^{-\delta(1-y)/\ssq}}^2_{K_{ij}}
\nonumber\\
&
\leq C \sq^{1/2} (N^{-1}\ln N)^{2(k+1)}.
\end{align*}
Similarly, 
\begin{align*}
\norm{\eta_{u_{2}}}^2_{\Omega_{22}}
&\leq C(N^{-1}\ln N)^{2(k+1)}\sum_{K_{ij}\in \Omega_{22}}
\norm{e^{-\delta y/\ssq}+e^{-\delta(1-y)/\ssq}}^2_{K_{ij}}
\nonumber\\
&
\leq C \tau_1 \sq^{1/2} (N^{-1}\ln N)^{2(k+1)}
=C\sq^{3/2} N^{-2(k+1)}(\ln N)^{2k+3}.
\end{align*}

For the corner layer component $u_{12}$,
the $L^{\infty}$-stability bound \eqref{GR:stb} and \eqref{reg:u2} 
give
\begin{align*}
\norm{\eta_{u_{12}}}_{L^{\infty}(K_{ij})}
&\leq C \norm{u_{12}}_{L^{\infty}(K_{ij})}
\leq Ce^{-\alpha(1-x_{i})/\sq}
\left(e^{-\delta y_{j-1}/\ssq}+e^{-\delta(1-y_{j})/\ssq}\right)
\end{align*}
for each $K_{ij}\in \Omega_{11}\cup \Omega_{12}\cup \Omega_{21}$, 
which implies
\begin{align*}
\norm{\eta_{u_{12}}}_{\Omega_{11}}
\leq C N^{-2\sigma},
\quad
\norm{\eta_{u_{12}}}_{\Omega_{12}}
\leq C \tau_2^{1/2} N^{-\sigma},
\quad
\norm{\eta_{u_{12}}}_{\Omega_{21}}
\leq C \tau_1^{1/2} N^{-\sigma}.
\end{align*}
If $K_{ij}\in \Omega_{22}$,
then using \eqref{2GR:app} with $\ell=2$ and \eqref{reg:u12} we get
\begin{align*}
\norm{\eta_{u_{12}}}^2_{\Omega_{22}}
&
\leq C\sum_{K_{ij}\in \Omega_{22}}
\left[
h_{x,i}^{2(k+1)}\norm{\partial_x^{k+1}u_{12}}^2_{K_{ij}}
+h_{y,j}^{2(k+1)}\norm{\partial_y^{k+1}u_{12}}^2_{K_{ij}}
\right]
\nonumber\\
&\leq C(N^{-1}\ln N)^{2(k+1)}
\sum_{K_{ij}\in \Omega_{22}}
\norm{e^{-\alpha(1-x)/\sq}
	\left(e^{-\delta y/\ssq}+e^{-\delta(1-y)/\ssq}\right)}^2_{K_{ij}}
\nonumber\\
&
\leq C \sq^{3/2} (N^{-1}\ln N)^{2(k+1)}.
\end{align*}

But $\tau_1=O(\sq\ln N)$,
$\tau_2=O(\ssq\ln N)\leq C$ 
and $\sigma\geq k+1$, 
so one can combine the above estimates to get 
 \eqref{etau:L2} and \eqref{etau:L2:layer}.

\emph{Proof of \eqref{etau:bry:left:x} and \eqref{etau:bry:right}:} 
Fix $i\in\{1,\dots, N\}$.
Recalling \eqref{GR:tensor:product},
one has
$(\eta_u)_{i,y}^{-}=u_{i,y}- \pi_y^{-}u_{i,y}$.
Assumption~\ref{assumption:reg:2d} implies that 
$u_{i,y}(y)=S_{i,y}(y)+E_{i,y}(y)$ 
where 
\[
\left|\frac{\mathrm{d}^j S_{i,y}(y)}{\mathrm{d} y^j}\right|\leq C
\text{ and } 
\left|\frac{\mathrm{d}^j E_{i,y}(y)}{\mathrm{d} y^j}\right|\leq 
C\varepsilon^{-j/2} \left(e^{-\delta y/\ssq}
+e^{-\delta(1-y)/\ssq}\right)
\ \text{ for }0\leq j \leq k+1.
\]
Set $\eta_{S_{i,y}}:=S_{i,y}- \pi_y^{-}S_{i,y}$ 
	and $\eta_{E_{i,y}}:=E_{i,y}- \pi_y^{-}E_{i,y}$.
By calculations similar to those used earlier in the proof, we get
\begin{align*}
\sum_{j=1}^{N}\norm{\eta_{S_{i,y}}}^2_{J_j}&\leq CN^{-2(k+1)},
\quad
\sum_{j=N/4+1}^{3N/4}\norm{\eta_{E_{i,y}}}^2_{J_j}\leq CN^{-2\sigma},
\\
\sum_{j=1}^{N/4}\norm{\eta_{E_{i,y}}}^2_{J_j}
\leq C \sum_{j=1}^{N/4} 
&(N^{-1}\ln N)^{2(k+1)}
\norm{e^{-\delta y/\ssq}+e^{-\delta(1-y)/\ssq}}^2_{J_{j}}
\leq C\sq^{1/2} (N^{-1}\ln N)^{2(k+1)},
\end{align*}
and likewise for the term $\sum_{j=3N/4+1}^{N}\norm{\eta_{E_{i,y}}}^2_{J_j}$.
Then  \eqref{etau:bry:left:x} follows from the above estimates
and a triangle inequality.

One can prove \eqref{etau:bry:right} by a similar argument.

\emph{Proof of \eqref{etau:bry:left}:} It follows from \eqref{etau:bry:left:x} 
and $\ssq\ln N\leq C$ that
\begin{align*}
\sum_{i=1}^{N}\sum_{j=1}^{N}\norm{(\eta_u)_{i,y}^{-}}^2_{J_j}
\leq CN\Big[
N^{-2(k+1)}+\sq^{1/2}(N^{-1}\ln N)^{2(k+1)}
\Big]
\leq C(N^{-1}\ln N)^{2k+1}.
\end{align*}

\emph{Proof of \eqref{etau:jump}:}
 Using \eqref{2GR:app} with $\ell=\infty$
and \eqref{reg:u0}, we have
\begin{align}\label{L0:u0}
\sum_{j=1}^{N}\sum_{i=1}^{N} h_{y,j}
\norm{\eta_{u_0}}^2_{L^{\infty}(K_{ij})}
\leq C\sum_{j=1}^{N}\sum_{i=1}^{N} h_{y,j} N^{-2(k+1)}
\leq CN^{-(2k+1)}.
\end{align}
Next, by \eqref{eta:u1:Linf} and the $L^{\infty}$-approximation property
\eqref{2GR:app} with $\ell=\infty$ we see that
\[
\norm{\eta_{u_1}}_{L^{\infty}(K_{ij})}
\leq \begin{cases}
	C N^{-\sigma} &\text{if } K_{ij}\in \Omega_{11}\cup \Omega_{12}, \\
	C(N^{-1}\ln N)^{k+1} e^{-\alpha(1-x_i)/\sq} &\text{if } K_{ij}\in \Omega_{21}\cup \Omega_{22}.
	\end{cases}
\]
Hence
\begin{align}
\sum_{j=1}^{N}\sum_{i=1}^{N} h_{y,j}
\norm{\eta_{u_1}}^2_{L^{\infty}(K_{ij})}
&=\sum_{j=1}^{N}\sum_{i=1}^{N/2} h_{y,j}
\norm{\eta_{u_1}}^2_{L^{\infty}(K_{ij})}
+\sum_{j=1}^{N}\sum_{i=N/2+1}^{N} h_{y,j}
\norm{\eta_{u_1}}^2_{L^{\infty}(K_{ij})}
\nonumber\\
&\leq C\sum_{j=1}^{N}\sum_{i=1}^{N/2} h_{y,j} N^{-2\sigma}
+ C \sum_{i=N/2+1}^{N} (N^{-1}\ln N)^{2(k+1)} e^{-2\alpha(1-x_i)/\sq}
\nonumber\\
&\leq
CN^{-2\sigma+1}+ C(N^{-1}\ln N)^{2k+1},
\label{L0:u1}
\end{align}
since $\sum_{j=1}^{N}h_{y,j}=1$, the definition \eqref{mesh:size:x:y} gives 
$h_{x,N/2+1} =\dots=h_{x,N}= O(\sq N^{-1}\ln N)$ so 
$\sum_{i=N/2+1}^N e^{-2\alpha(1-x_i)/\sq}$ is a geometric series,
and the well-known formula for the sum of such a series yields
\[
\sum_{i=N/2+1}^N e^{-2\alpha(1-x_i)/\sq}
\leq \frac1{1-e^{-2\alpha h_{x,N}/\sq}}
\leq C(N^{-1}\ln N)^{-1}
\]
because   $1-e^{-2\alpha h_{x,N}/\sq} = O(N^{-1}\ln N)$.
Similarly, one has
\begin{align*}
\sum_{i=1}^{N}\sum_{j=N/4+1}^{3N/4} h_{y,j}
\norm{\eta_{u_2}}^2_{L^{\infty}(K_{ij})}
&\leq CN^{-2\sigma+1},
\nonumber\\
\sum_{i=1}^{N}\sum_{j=1}^{N/4} h_{y,j}
\norm{\eta_{u_2}}^2_{L^{\infty}(K_{ij})}
&\leq 
C\sum_{i=1}^{N}\sum_{j=1}^{N/4} h_{y,j}
(N^{-1}\ln N)^{2(k+1)}
\left(e^{-2\delta y_{j-1}/\ssq}+e^{-2\delta(1-y_{j})/\ssq}\right)
\nonumber\\
&\leq C\tau_2(N^{-1}\ln N)^{2k+1},
\nonumber\\
\intertext{and}
\sum_{i=1}^{N}\sum_{j=3N/4+1}^{N} h_{y,j}
\norm{\eta_{u_2}}^2_{L^{\infty}(K_{ij})}
&\leq C\tau_2(N^{-1}\ln N)^{2k+1}.
\end{align*}
From these bounds it follows that
\begin{align}\label{L0:u2}
\sum_{i=1}^{N}\sum_{j=1}^{N} h_{y,j}
\norm{\eta_{u_2}}^2_{L^{\infty}(K_{ij})}
\leq C\Big[N^{-2\sigma+1}+(\sq^{1/2}\ln N) (N^{-1}\ln N)^{2k+1} \Big].
\end{align}
A similar calculation yields 
\begin{align}
\label{L0:u12}
\sum_{i=1}^{N}\sum_{j=1}^{N} h_{y,j}
\norm{\eta_{u_{12}}}^2_{L^{\infty}(K_{ij})}
\leq C\Big[N^{-2\sigma+1}+\sq^{1/2}(N^{-1}\ln N)^{2k+1}\Big].
\end{align}
Now \eqref{etau:jump} follows from \eqref{L0:u0}--\eqref{L0:u12},
$\sigma\ge k+1$, $\ssq\ln N\leq C$ and
\begin{align*}
\sum_{j=1}^{N}\sum_{i=0}^{N}
\dual{1}{\jump{\eta_u}^2_{i,y}}_{J_j}
&\leq 2
\sum_{j=1}^{N}\sum_{i=1}^{N}
\left(\norm{(\eta_u)_{i-1,y}^{+}}_{J_j}^2
+\norm{(\eta_u)_{i,y}^{-}}_{J_j}^2
\right)
\leq C\sum_{j=1}^{N}\sum_{i=1}^{N} h_{y,j}
\norm{\eta_u}^2_{L^{\infty}(K_{ij})}.
\end{align*}
\end{proof}

\begin{remark}
The above proof of Lemma~\ref{lemma:GR1} remains valid if $m=k-1$ in Assumption~\ref{ass:2}, 
but to prove Lemma~\ref{lemma:GR2} we shall need $m=k$ in this assumption.
\end{remark}

\begin{lemma}\label{lemma:GR2}[Bounds on $\eta_p$ and $\eta_q$]
There exists a constant $C>0$ such that
	\begin{subequations}\label{2dGR:property2}
		\begin{align}
		\label{etap:L2}
		\varepsilon^{-1/2}\norm{\eta_p}
		&\leq  C (N^{-1}\ln N)^{k+1},
		\\
		\label{etap:bry:right}
		\left(\sum_{j=1}^{N}\norm{(\eta_p)_{N,y}^{-}}^2_{J_j}\right)^{1/2}
		&\leq  C(N^{-1}\ln N)^{k+1},
				\\
		\label{etaq:bry:top}
		\left(\sum_{i=1}^{N}\norm{(\eta_q)_{x,N}^{-}}^2_{I_i}\right)^{1/2}
		&\leq  C\sq^{1/2} (N^{-1}\ln N)^{k+1},
			\\
		\label{etaq:L2}
		\varepsilon^{-1/2}\norm{\eta_q}
		&\leq  C\big[N^{-\sigma}+\sq^{1/4}(N^{-1}\ln N)^{k+1}\big].
		\end{align}
	\end{subequations}
\end{lemma}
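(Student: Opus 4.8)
The plan is to mirror the region-by-region argument of Lemma~\ref{lemma:GR1}, but applied now to the flux variables. Since $p=\sq u_x$ and $q=\sq u_y$, I would first differentiate the component bounds \eqref{reg:u0}--\eqref{reg:u12} once more to obtain pointwise bounds on the derivatives of each piece of $p$ and $q$; for instance $|\partial_x^i\partial_y^j p_z|=\sq|\partial_x^{i+1}\partial_y^j u_z|$ and $|\partial_x^i\partial_y^j q_z|=\sq|\partial_x^i\partial_y^{j+1}u_z|$ for each solution component $u_z\in\{u_0,u_1,u_2,u_{12}\}$. This is exactly the step that forces $m=k$ rather than $m=k-1$ in Assumption~\ref{ass:2}, since it calls for derivatives of $u$ up to order $k+2$. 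The crucial qualitative feature is that each $x$-derivative of the exponential layer costs a factor $\sq^{-1}$ while each $y$-derivative of the characteristic layer costs only $\sq^{-1/2}$; consequently $p_1=\sq(u_1)_x$ is $O(e^{-\alpha(1-x)/\sq})$ whereas $q_2=\sq(u_2)_y$ is $O(\ssq(\cdots))$, and this is the source of the half-integer powers of $\sq$ in \eqref{etaq:bry:top}--\eqref{etaq:L2}.

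For the interior $L^2$ bounds \eqref{etap:L2} and \eqref{etaq:L2} I would split $\bar\Omega$ into $\Omega_{11},\Omega_{12},\Omega_{21},\Omega_{22}$ and treat each component on each subregion by the dichotomy of Lemma~\ref{lemma:GR1}: where the relevant component is exponentially small I use the $L^\infty$-stability \eqref{GR:stb}, and where the mesh is fine in the direction of the layer I use the anisotropic approximation \eqref{2GR:app} with the mesh sizes \eqref{mesh:size:x:y}. For \eqref{etap:L2} the dominant piece is $\eta_{p_1}$ on $\Omega_{21}\cup\Omega_{22}$: there $h_{x,i}^{k+1}\|\partial_x^{k+1}p_1\|\le C(N^{-1}\ln N)^{k+1}\|e^{-\alpha(1-x)/\sq}\|$, and summing $\|e^{-\alpha(1-x)/\sq}\|^2$ over the mesh gives $O(\sq)$, so that $\sq^{-1/2}\|\eta_{p_1}\|\le C(N^{-1}\ln N)^{k+1}$. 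Crucially, on $\Omega_{11}\cup\Omega_{12}$ where $p_1$ is exponentially small I would not merely bound the projection error by $N^{-\sigma}$, but integrate $|p_1|^2\le Ce^{-2\alpha(1-x)/\sq}$; since $\int_0^{1-\tau_1}e^{-2\alpha(1-x)/\sq}\,dx\le C\sq N^{-2\sigma}$ this recovers an extra $\ssq$, giving $\sq^{-1/2}\|\eta_{p_1}\|_{\Omega_{11}\cup\Omega_{12}}\le CN^{-\sigma}\le C(N^{-1}\ln N)^{k+1}$ by $\sigma\ge k+1$. The estimate \eqref{etaq:L2} is analogous, but now $\eta_{q_2}$ on $\Omega_{12}\cup\Omega_{22}$ dominates: using $|\partial_y^{k+1}q_2|\le C\sq^{-k/2}(\cdots)$, $h_{y,j}=O(\ssq N^{-1}\ln N)$, and $\sum\|e^{-\delta y/\ssq}+e^{-\delta(1-y)/\ssq}\|^2=O(\ssq)$, one reaches $\sq^{-1/2}\|\eta_{q_2}\|\le C\sq^{1/4}(N^{-1}\ln N)^{k+1}$, while the separate $N^{-\sigma}$ term comes from the crude $L^\infty$-stability estimate of $\eta_{q_2}$ on $\Omega_{11}\cup\Omega_{21}$, where $|q_2|\le C\ssq N^{-\sigma}$.

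For the two edge bounds \eqref{etap:bry:right} and \eqref{etaq:bry:top} I would reduce the trace on an outflow edge to an interior $L^\infty$ estimate via the elementary inequality $\|g^-_{N,y}\|_{J_j}^2\le h_{y,j}\|g\|_{L^\infty(K_{Nj})}^2$ (and its $x$-analogue $\|g^-_{x,N}\|_{I_i}^2\le h_{x,i}\|g\|_{L^\infty(K_{iN})}^2$), exactly as the jump bound \eqref{etau:jump} was derived. Applying \eqref{2GR:app} with $\ell=\infty$ in the column $i=N$ gives $\|\eta_p\|_{L^\infty(K_{Nj})}\le C(N^{-1}\ln N)^{k+1}$ (the factors $h_{x,N}^{k+1}$ and $\sq^{-(k+1)}$ cancelling for the layer component), and $\sum_j h_{y,j}=1$ delivers \eqref{etap:bry:right}; likewise, in the row $j=N$ one finds $\|\eta_q\|_{L^\infty(K_{iN})}\le C\ssq(N^{-1}\ln N)^{k+1}$, and $\sum_i h_{x,i}=1$ produces the extra factor $\ssq$ in \eqref{etaq:bry:top}. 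Throughout I would invoke the tensor-product representation \eqref{GR:tensor:product} to justify that the projection error at the non-interpolated endpoint of each one-dimensional Gauss--Radau projector is itself $O(h^{k+1})$.

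The main obstacle is not any single estimate but the uniform-in-$\sq$ bookkeeping of the fractional powers of $\sq$ in the characteristic-layer and corner regions. Because $q=\sq u_y$ and each crosswind derivative of $u_2$ (and of $u_{12}$) blows up only like $\sq^{-1/2}$, the numerology genuinely differs from the exponential-layer case of \cite{Cheng:Jiang:Stynes:2022,Zhu:2dMC}, and one must repeatedly trade a factor $\ssq$ between the mesh size $h_{y,j}=O(\ssq N^{-1}\ln N)$, the derivative bound $\sq^{-j/2}$, and the $L^2$-mass $O(\ssq)$ of the layer profile in the $y$-direction. The most delicate point is ensuring that the prefactor $\sq^{-1/2}$ on the left of \eqref{etap:L2} and \eqref{etaq:L2} is always compensated: away from the layers this requires exploiting the $L^2$-integrability of the exponential profiles rather than their pointwise size to recover the missing half-power of $\sq$, and in the corner region $\Omega_{22}$ it requires summing the exponentially decaying contributions as a geometric series, using $1-e^{-2\alpha h_{x,N}/\sq}=O(N^{-1}\ln N)$ just as in \eqref{L0:u1}.
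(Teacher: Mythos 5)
Your proposal is correct and follows essentially the same route as the paper: decompose $p=\sq u_x$ and $q=\sq u_y$ componentwise, use $L^2$/$L^\infty$-stability of the projectors where a component is exponentially small (exploiting the $L^2$-mass $O(\sq)$ or $O(\ssq)$ of the layer profiles to recover the half-powers of $\sq$) and the anisotropic approximation property \eqref{2GR:app} in the refined regions, and reduce the outflow-edge traces to $L^\infty$ bounds on the boundary row/column of elements. You also correctly identify the one genuinely delicate point — that the non-monotonicity of $y\mapsto e^{-\delta y/\ssq}+e^{-\delta(1-y)/\ssq}$ forces the $L^\infty$-stability route for $\eta_{q_2}$ on $\Omega_{11}\cup\Omega_{21}$, which is exactly where the separate $N^{-\sigma}$ term in \eqref{etaq:L2} originates in the paper's proof.
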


\begin{proof}\mbox{  }\\
\emph{Proof of \eqref{etap:L2}:}
From Assumption~\ref{ass:2} 
(i.e., Assumption~\ref{assumption:reg:2d} with $m=k$)
it follows that
$p=\sq u_x= \sq u_{0,x} + \sq u_{1,x}+ \sq u_{2,x} +\sq u_{12,x}
:= p_0 + p_1 + p_2 +p_{12}$, where
	\begin{subequations}\label{reg:p:2d}
	\begin{align}
	\label{reg:p0}
	\left| \partial_x^{i}\partial_y^{j}p_0(x,y)\right|
	\leq&\; C\sq,
	\\
	\label{reg:p1}
	\left| \partial_x^{i}\partial_y^{j}p_{1}(x,y)\right|
	\leq &\; C\varepsilon^{-i} e^{-\alpha(1-x)/\varepsilon},
	\\
	\label{reg:p2}
	\left| \partial_x^{i}\partial_y^{j}p_{2}(x,y)\right|
	\leq &\; C\varepsilon^{1-j/2} \left(e^{-\delta y/\ssq}
	+e^{-\delta(1-y)/\ssq}\right),
	\\
	\label{reg:p12}
	\left| \partial_x^{i}\partial_y^{j}p_{12}(x,y)\right|
	\leq &\; C\varepsilon^{-(i+j/2)}e^{-\alpha(1-x)/\varepsilon}
	\left(e^{-\delta y/\ssq}+e^{-\delta(1-y)/\ssq}\right)
	\end{align}
\end{subequations}
for all $(x,y)\in \bar{\Omega}$ and
all nonnegative integers $i,j$ with $i+j\le k+1$. 
Set $\eta_{p_z} :=p_z - \Pi_x^+ p_z$ 
for each $p_z\in\{p_0,p_1,p_2,p_{12}\}$.

It is easy to see from \eqref{2GR:app} and \eqref{reg:p0} that
$\norm{\eta_{p_0}}\leq C\sq N^{-(k+1)}$.

For $K_{ij}\in \Omega_{11}\cup \Omega_{12}$,
we use \eqref{reg:p1}  and the $L^{2}$-stability property \eqref{GR:stb:L2:p} 
to obtain
\begin{align*}
\norm{\eta_{p_1}}^2_{K_{ij}}
&\leq C \left(
\norm{p_1}^2_{K_{ij}}+h_{x,i}\norm{(p_1)^{+}_{i-1,y}}^2_{J_j}
\right)\\
&\leq C h_{y,j}\left(\norm{e^{-\alpha(1-x)/\sq}}^2_{I_i}
+ h_{x,i}e^{-2\alpha(1-x_{i-1})/\sq}
\right)
\leq C h_{y,j} \norm{e^{-\alpha(1-x)/\sq}}^2_{I_i},
\end{align*}
because $x\mapsto e^{-\alpha(1-x)/\sq}$ is a monotonically increasing function.
Hence
\begin{align*}
\norm{\eta_{p_1}}^2_{\Omega_{11}\cup \Omega_{12}}
\leq \sum_{j=1}^N \sum_{i=1}^{N/2}
h_{y,j} \norm{e^{-\alpha(1-x)/\sq}}^2_{I_i}
\leq C\sq e^{-2\alpha(1-x_{N/2})/\sq}
\leq C\sq N^{-2\sigma}.
\end{align*}
For $K_{ij}\in \Omega_{21}\cup \Omega_{22}$,
\eqref{reg:p1} and the $L^{2}$-approximation property \eqref{2GR:app} 
yield
\begin{align*}
\norm{\eta_{p_1}}^2_{K_{ij}}
&\leq C 
\left[
h_{x,i}^{2(k+1)}
\norm{\partial_x^{k+1}p_{1}}^2_{K_{ij}}
+h_{y,j}^{2(k+1)}\norm{\partial_y^{k+1}p_{1}}^2_{K_{ij}}
\right]
\\
&\leq
Ch_{y,j}(N^{-1}\ln N)^{2(k+1)}
\norm{e^{-\alpha(1-x)/\sq}}^2_{I_{i}},
\end{align*}
which leads to
\begin{align*}
\norm{\eta_{p_1}}^2_{\Omega_{21}\cup \Omega_{22}}
&\leq
C(N^{-1}\ln N)^{2(k+1)} \sum_{j=1}^{N} \sum_{i=N/2+1}^{N}h_{y,j}\norm{e^{-\alpha(1-x)/\sq}}^2_{I_i}
\leq C\sq (N^{-1}\ln N)^{2(k+1)}.
\end{align*}

In a similar manner, one uses \eqref{GR:stb:L2:p} and  \eqref{reg:p2}  to get
\begin{align*}
\norm{\eta_{p_2}}^2_{K_{ij}}
&\leq C \left(
\norm{p_2}^2_{K_{ij}}+h_{x,i}\norm{(p_2)^{+}_{i-1,y}}^2_{J_j}
\right)\\
&\leq C\sq^2 \left(\norm{e^{-\delta y/\ssq}+e^{-\delta(1-y)/\ssq}}^2_{K_{ij}}
+ h_{x,i}\norm{e^{-\delta y/\ssq}+e^{-\delta(1-y)/\ssq}}^2_{J_j}
\right)\\
&\leq C\sq^2 h_{x,i}\norm{e^{-\delta y/\ssq}+e^{-\delta(1-y)/\ssq}}^2_{J_j}
\end{align*} 
for each $K_{ij}\in \Omega_N$. One can hence obtain the bounds
\begin{align*}
\norm{\eta_{p_2}}^2_{\Omega_{11}\cup \Omega_{21}}
&\leq C\sq^2\sum_{i=1}^N \sum_{j=N/4+1}^{3N/4}
h_{x,i} \norm{e^{-\delta y/\ssq}+e^{-\delta(1-y)/\ssq}}^2_{J_{j}}
\leq C\sq^{5/2} N^{-2\sigma},
\\
\norm{\eta_{p_2}}^2_{\Omega_{12}\cup \Omega_{22}}
&\leq C\sq^2 \sum_{i=1}^N \left(\sum_{j=1}^{N/4}+\sum_{j=3N/4+1}^{N}\right)
h_{x,i}(N^{-1}\ln N)^{2(k+1)}
\norm{e^{-\delta y/\ssq}+e^{-\delta(1-y)/\ssq}}^2_{J_{j}}
\\
&\leq C\sq^{5/2} (N^{-1}\ln N)^{2(k+1)}.
\end{align*}

For the corner layer component $\eta_{p_{12}}$, one gets likewise
\begin{align*}
\norm{\eta_{p_{12}}}^2_{\Omega_{11}}
&\leq C \sq^{3/2} N^{-4\sigma},
\quad
\norm{\eta_{p_{12}}}^2_{\Omega_{12}}
\leq C \sq^{3/2}N^{-2\sigma},
\\
\norm{\eta_{p_{12}}}^2_{\Omega_{21}}
&\leq C \sq^{3/2} N^{-2\sigma}, 
\quad
\norm{\eta_{p_{12}}}^2_{\Omega_{22}}
\leq C\sq^{3/2} (N^{-1}\ln N)^{2(k+1)}.
\end{align*}
This completes the proof of \eqref{etap:L2}.

\emph{Proof of \eqref{etap:bry:right}:}
Using \eqref{reg:p:2d},
the $L^{\infty}$-stability property \eqref{GR:stb}
and the $L^{\infty}$-approximation property \eqref{2GR:app}
with $\ell=\infty$, for $j=1,\dots, N$ we get 
\begin{align*}
\norm{\eta_{p_0}}_{L^{\infty}(K_{Nj})}
&\leq C\sq N^{-(k+1)},
\qquad
\norm{\eta_{p_1}}_{L^{\infty}(K_{Nj})}
\leq C(N^{-1}\ln N)^{k+1},
\\
\norm{\eta_{p_2}}_{L^{\infty}(K_{Nj})}
&\leq C\sq N^{-\sigma}+C\sq(N^{-1}\ln N)^{k+1},
\qquad
\norm{\eta_{p_{12}}}_{L^{\infty}(K_{Nj})}
\leq C N^{-\sigma}+C(N^{-1}\ln N)^{k+1}.
\end{align*}
Then  \eqref{etap:bry:right} follows using $\sigma\ge k+1$.

\emph{Proof of \eqref{etaq:bry:top}:}
Define the decomposition 
$q=\sq u_y= \sq u_{0,y} + \sq u_{1,y}+ \sq u_{2,y} +\sq u_{12,y}
:= q_0 + q_1 + q_2 +q_{12}$, where
\begin{subequations}\label{reg:q}
	\begin{align}
	\label{reg:q0}
	\left| \partial_x^{i}\partial_y^{j}q_0(x,y)\right|
	\leq&\; C\sq,
	\\
	\label{reg:q1}
	\left| \partial_x^{i}\partial_y^{j}q_{1}(x,y)\right|
	\leq &\; C\varepsilon^{1-i} e^{-\alpha(1-x)/\varepsilon},
	\\
	\label{reg:q2}
	\left| \partial_x^{i}\partial_y^{j}q_{2}(x,y)\right|
	\leq &\; C\varepsilon^{1/2-j/2} (e^{-\delta y/\ssq}
	+e^{-\delta(1-y)/\ssq}),
	\\
	\label{reg:q12}
	\left| \partial_x^{i}\partial_y^{j}q_{12}(x,y)\right|
	\leq &\; C\varepsilon^{1/2-(i+j/2)}e^{-\alpha(1-x)/\varepsilon}
	(e^{-\delta y/\ssq}+e^{-\delta(1-y)/\ssq})
	\end{align}
\end{subequations}
for all $(x,y)\in \bar{\Omega}$ and
all nonnegative integers $i,j$ with $i+j\le k+1$. 
Set $\eta_{q_z} :=q_z - \Pi_y^+ q_z$ 
for each $q_z\in\{q_0,q_1,q_2,q_{12}\}$.

One proves \eqref{etaq:bry:top} 
similarly to \eqref{etap:bry:right}, replacing $p$ by $q$ everywhere 
and observing that \eqref{reg:q1}--\eqref{reg:q12} each gain a factor of at least $\sq^{1/2}$
compared with \eqref{reg:p1}--\eqref{reg:p12}.

\emph{Proof of \eqref{etaq:L2}:}
We again use the decomposition $q= q_0 + q_1 + q_2 +q_{12}$ and the bounds \eqref{reg:q}.
First, it is easy to get $\norm{\eta_{q_0}}\leq C\sq N^{-(k+1)}$
from \eqref{2GR:app} and \eqref{reg:q0}.

For $K_{ij}\in \Omega_{11}\cup \Omega_{12}$,
we use the $L^{2}$-stability bound \eqref{GR:stb:L2:q} to get
\begin{align*}
\norm{\eta_{q_1}}^2_{K_{ij}}
&\leq C \left(
\norm{q_1}^2_{K_{ij}}+h_{y,j}\norm{(q_1)^{+}_{x,j-1}}^2_{I_i}
\right)
\leq C\sq^2 h_{y,j} \norm{e^{-\alpha(1-x)/\sq}}^2_{I_i}.
\end{align*}
This implies
\begin{align*}
\norm{\eta_{q_1}}^2_{\Omega_{11}\cup \Omega_{12}}
\leq C\sq^2\sum_{j=1}^N \sum_{i=1}^{N/2}
h_{y,j} \norm{e^{-\alpha(1-x)/\sq}}^2_{I_i}
\leq C\sq^3 e^{-2\alpha(1-x_{N/2})/\sq}
\leq C\sq^3 N^{-2\sigma}.
\end{align*}
The $L^{2}$-approximation property \eqref{2GR:app} with $\ell=2$ and \eqref{reg:q1} yield
\begin{align*}
\norm{\eta_{q_1}}^2_{\Omega_{21}\cup \Omega_{22}}
&
\leq 
\sum_{K_{ij}\in \Omega_{21}\cup \Omega_{22}}
C\sq^2 (N^{-1}\ln N)^{2(k+1)}\norm{e^{-\alpha(1-x)/\sq}}^2_{K_{ij}}
\leq C\sq^3 (N^{-1}\ln N)^{2(k+1)}.
\end{align*}

For $\eta_{q_2}$ we are unable to imitate the  analysis of $\eta_{q_1}$ 
because the function $y\mapsto e^{-\delta y/\ssq}+e^{-\delta(1-y)/\ssq}$
is not monotone. Instead we invoke the $L^{\infty}$-stability 
bound \eqref{GR:stb}, getting
\begin{align*}
\norm{\eta_{q_2}}_{L^{\infty}(K_{ij})}
&\leq C \norm{q_2}_{L^{\infty}(K_{ij})}
\leq C \sq^{1/2} (e^{-\delta y_{j-1}/\ssq}+e^{-\delta(1-y_j)/\ssq})
\leq C \sq^{1/2} N^{-\sigma}
\end{align*}
for each $K_{ij}\in \Omega_{11}\cup \Omega_{21}$.
Hence
\[
\norm{\eta_{q_2}}_{\Omega_{11}\cup \Omega_{21}}\leq
\norm{\eta_{q_2}}_{L^{\infty}(\Omega_{11}\cup \Omega_{21})}
\leq C\sq^{1/2} N^{-\sigma}.
\]
The $L^2$-approximation property \eqref{2GR:app} with $\ell=2$ and \eqref{reg:q2} yield
\begin{align*}
\norm{\eta_{q_2}}^2_{\Omega_{12}\cup \Omega_{22}}
&\leq \sum_{K_{ij}\in \Omega_{12}\cup \Omega_{22}}
C\sq (N^{-1}\ln N)^{2(k+1)}
\norm{e^{-\delta y/\ssq}+e^{-\delta(1-y)/\ssq}}^2_{K_{ij}}
\\
&\leq C\sq^{3/2} (N^{-1}\ln N)^{2(k+1)}.
\end{align*}
Similarly, one has
\begin{align*}
\norm{\eta_{q_{12}}}^2_{\Omega_{11}\cup \Omega_{21}\cup \Omega_{12}}
\leq C\sq N^{-2\sigma},
\quad
\norm{\eta_{q_{12}}}^2_{\Omega_{22}}
\leq C\sq^{5/2} (N^{-1}\ln N)^{2(k+1)}.
\end{align*}
Adding these bounds, one gets \eqref{etaq:L2}.
\end{proof}

\subsection{A superapproximation result}\label{sec:superapprox}

For each element $K_{ij}\in \Omega_N$ and each $v\in \spc$,
define the two local bilinear forms 
\begin{subequations}\label{bilinear:D}
\begin{align}
{\mathcal D}^1_{ij}(\eta_u;v)
&:= \dual{\eta_u}{v_x}_{K_{ij}}
-\dual{(\eta_u)^{-}_{i,y}}{v^{-}_{i,y}}_{J_{j}}
+\dual{(\eta_u)^{-}_{i-1,y}}{v^{+}_{i-1,y}}_{J_{j}},
\\
{\mathcal D}^2_{ij}(\eta_u;v)
&:= \dual{\eta_u}{v_y}_{K_{ij}}
-\dual{(\eta_u)^{-}_{x,j}}{v^{-}_{x,j}}_{I_{i}}
+\dual{(\eta_u)^{-}_{x,j-1}}{v^{+}_{x,j-1}}_{I_{i}},
\end{align}
\end{subequations}
where we set $(\eta_u)^{-}_{0,y}=(\eta_u)^{-}_{x,0}=0$.

The next lemma presents a superapproximation result for these operators that we will need in our error analysis.
While $\sigma\geq k+1$ sufficed for our previous analysis, we shall need $\sigma\geq k+2$ here.

\begin{lemma}\label{superapproximation:element}
Choose $\sigma\geq k+2$.
Assume that $\sq^{1/4}\ln N\leq C_2$  for some fixed constant~$C_2$. 
Then there exists a constant $C>0$ such that 
\begin{align}
\label{sup:x:left}
\left[
\sup_{s\in \spc}
\sum_{K_{ij}\in \Omega_{11}\cup \Omega_{12}}  
\left(\frac{{\mathcal D}^1_{ij}(\eta_u;s)}{\norm{s}_{K_{ij}}}\right)^2
\right]^{1/2}
&\leq C(N^{-1}\ln N)^{k+1},
\\
\label{sup:x:right}
\left[
\sup_{s\in \spc}
\sum_{K_{ij}\in \Omega_{21}\cup \Omega_{22}}  
\left(\frac{{\mathcal D}^1_{ij}(\eta_u;s)}{\norm{s}_{K_{ij}}}\right)^2
\right]^{1/2}
&\leq C\varepsilon^{-1/2}(N^{-1}\ln N)^{k+1},
\\
\label{sup:y}
\left[
\sup_{s\in \spc}
\sum_{K_{ij}\in \Omega_{N}}  
\left(\frac{{\mathcal D}^2_{ij}(\eta_u;s)}{\norm{s}_{K_{ij}}}\right)^2
\right]^{1/2}
&\leq C\varepsilon^{-1/4}(N^{-1}\ln N)^{k+1}.
\end{align}
\end{lemma}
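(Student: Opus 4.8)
The plan is to estimate each local bilinear form $\mathcal{D}^\ell_{ij}(\eta_u;s)$ by integration by parts in the appropriate coordinate direction, which converts the three terms in \eqref{bilinear:D} into a single volume integral against a \emph{derivative} of $s$ plus boundary terms that can be reorganised. Specifically, for $\mathcal{D}^1_{ij}$ one has $\dual{\eta_u}{s_x}_{K_{ij}} = -\dual{(\eta_u)_x}{s}_{K_{ij}} + \dual{(\eta_u)^-_{i,y}}{s^-_{i,y}}_{J_j} - \dual{(\eta_u)^+_{i-1,y}}{s^+_{i-1,y}}_{J_j}$, so that after substitution the surface traces of $s$ at $x_i^-$ cancel and one is left with $\mathcal{D}^1_{ij}(\eta_u;s) = -\dual{(\eta_u)_x}{s}_{K_{ij}} + \dual{\jump{\eta_u}_{i-1,y}}{s^+_{i-1,y}}_{J_j}$, where the remaining trace difference $(\eta_u)^-_{i-1,y}-(\eta_u)^+_{i-1,y}$ involves only the projection error across the left edge. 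By Cauchy--Schwarz each summand is then controlled by $\norm{s}_{K_{ij}}$ times a factor built from $\norm{(\eta_u)_x}_{K_{ij}}$ and an inverse-trace-scaled edge contribution $h_{x,i}^{-1/2}\norm{\jump{\eta_u}_{i-1,y}}_{J_j}$, so the supremum over $s$ divides out $\norm{s}_{K_{ij}}$ cleanly and leaves a computable sum of projection-error quantities.

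First I would carry out this integration-by-parts reduction for both $\mathcal{D}^1_{ij}$ and $\mathcal{D}^2_{ij}$ and record the resulting bounds $|\mathcal{D}^1_{ij}(\eta_u;s)| \le C\norm{s}_{K_{ij}}\bigl(\norm{(\eta_u)_x}_{K_{ij}} + h_{x,i}^{-1/2}\norm{\jump{\eta_u}_{i-1,y}}_{J_j}\bigr)$ and its $y$-analogue. The task then splits into bounding the derivative term $\norm{\partial_x\eta_u}_{K_{ij}}$ (respectively $\norm{\partial_y\eta_u}_{K_{ij}}$) and the edge-jump term on each subregion. For the derivative, I would decompose $u = u_0+u_1+u_2+u_{12}$ and apply the anisotropic approximation estimate \eqref{2GR:app} to $\partial_x\eta_u$ (using that $\Pi^-$ reproduces enough polynomial degree that $\partial_x(z-\Pi^- z)$ still enjoys an $O(h^{k+1})$ bound), weighting the layer components by their derivative bounds from \eqref{reg:u:2d} and the mesh sizes \eqref{mesh:size:x:y}. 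On $\Omega_{11}\cup\Omega_{12}$ the convective mesh is coarse ($h_{x,i}=O(N^{-1})$), the exponential and corner layers are exponentially small (by the $\sigma\ge k+2$ choice, as in \eqref{eta:u1:Linf}), and only $u_0$ and $u_2$ contribute, giving the clean $(N^{-1}\ln N)^{k+1}$ rate of \eqref{sup:x:left}. On $\Omega_{21}\cup\Omega_{22}$ the mesh is fine in $x$ so $h_{x,i}^{-1/2}=O(\sq^{-1/2}(N^{-1}\ln N)^{-1/2})$, which is exactly where the anticipated $\varepsilon^{-1/2}$ factor in \eqref{sup:x:right} enters. For \eqref{sup:y}, the derivative $\partial_y\eta_u$ picks up the crosswind scaling $\varepsilon^{-1/2}$ from the parabolic-layer bounds \eqref{reg:u2}, and $h_{y,j}^{-1/2}=O(\sq^{-1/4})$ on the fine-$y$ strips, producing the $\varepsilon^{-1/4}$ factor.

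The main obstacle will be handling the edge-jump terms $h_{x,i}^{-1/2}\norm{\jump{\eta_u}_{i-1,y}}_{J_j}$ and the crosswind derivative estimate in \eqref{sup:y}, because the parabolic layer $u_2$ is genuinely two-dimensional and the nonmonotonicity of $e^{-\delta y/\ssq}+e^{-\delta(1-y)/\ssq}$ forbids the simple monotone-summation trick used for the exponential layer in Lemma~\ref{lemma:GR2}. I would treat the jump contributions by the same edge-trace technology used to prove \eqref{etau:jump}, bounding $\norm{\jump{\eta_u}_{i-1,y}}_{J_j}$ through $h_{y,j}^{1/2}\norm{\eta_u}_{L^\infty(K_{ij})}$ and the approximation bound \eqref{2GR:app} with $\ell=\infty$, then summing over $i,j$ region by region; the extra $h_{x,i}^{-1/2}$ or $h_{y,j}^{-1/2}$ weight is absorbed into the $\varepsilon^{-1/2}$ or $\varepsilon^{-1/4}$ prefactors respectively. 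The technical care needed is to verify that in each region the dominant contribution really does come from the smooth and parabolic components (the exponential and corner layers being $O(N^{-\sigma})$ and hence negligible under $\sigma\ge k+2$), and that the hypothesis $\sq^{1/4}\ln N\le C_2$ is precisely what is required to absorb logarithmic factors generated by the measure of the fine-mesh strips so that the final rates in \eqref{sup:x:left}--\eqref{sup:y} emerge with no residual $\ln N$ losses beyond those displayed.
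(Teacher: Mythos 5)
There is a genuine gap, and it sits at the heart of your argument. Your integration-by-parts identity
\[
{\mathcal D}^1_{ij}(\eta_u;s) \;=\; -\dual{(\eta_u)_x}{s}_{K_{ij}} \;-\; \dual{\jump{\eta_u}_{i-1,y}}{s^{+}_{i-1,y}}_{J_{j}}
\]
is algebraically correct, but the subsequent claim that ``$\partial_x(z-\Pi^- z)$ still enjoys an $O(h^{k+1})$ bound'' is false. For any projection onto piecewise polynomials of degree $k$, differentiating the error costs one order: $\norm{\partial_x(z-\Pi^- z)}_{K_{ij}}\le C h_{x,i}^{k}\norm{\partial_x^{k+1}z}_{K_{ij}}+\dots$, and this is sharp (take $z$ a degree-$(k+1)$ monomial). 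With the correct $O(h^{k})$ bound, already the smooth component $u_0$ on the coarse region gives $\sum_{K_{ij}\in\Omega_{11}}\norm{\partial_x\eta_{u_0}}^2_{K_{ij}}/\norm{s}^2_{K_{ij}}\cdot\norm{s}^2_{K_{ij}}\sim N^{-2k}$, i.e.\ a bound of order $N^{-k}$ for \eqref{sup:x:left} --- one full power of $N^{-1}$ short of the stated $(N^{-1}\ln N)^{k+1}$. The edge-jump term fares no better under your proposed treatment: $h_{x,i}^{-1/2}\norm{\jump{\eta_u}_{i-1,y}}_{J_j}\lesssim h_{x,i}^{-1/2}h_{y,j}^{1/2}N^{-(k+1)}$ sums over $\Omega_{11}$ to $O(N^{-2k})$ as well. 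So plain Cauchy--Schwarz applied term by term after integration by parts cannot reach the claimed rates.

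The missing ingredient is the \emph{superapproximation} property of the specific combination of volume and edge terms in ${\mathcal D}^1_{ij}$, namely the bound
\[
|\mathcal{D}^{1}_{ij}(\eta_z;s)|\le Ch_{x,i}^{-1}\bigl[h_{x,i}^{k+2}\norm{\partial_x^{k+2}z}_{K_{ij}}+h_{y,j}^{k+2}\norm{\partial_y^{k+2}z}_{K_{ij}}\bigr]\norm{s}_{K_{ij}},
\]
which the paper imports from \cite[(4.15b)]{Cheng:Jiang:Stynes:2022} and \cite[Lemma 4.8]{Zhu:2dMC}. This exploits the orthogonality built into the Gauss--Radau projection (${\mathcal D}^1_{ij}(\eta_z;s)$ annihilates locally enhanced polynomial spaces, so Bramble--Hilbert yields $h^{k+2}$ with one extra derivative of $z$ --- this is exactly why the lemma needs $m=k$ in Assumption~\ref{assumption:reg:2d}), and it delivers the extra power of $h$ that no termwise estimate can. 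The paper then combines this with the $L^\infty$-stability bound $|\mathcal{D}^{1}_{ij}(\eta_z;s)|\le C\sqrt{h_{y,j}/h_{x,i}}\norm{z}_{L^\infty(K_{ij})}\norm{s}_{K_{ij}}$, choosing region by region and component by component: stability where a layer component is exponentially small (there $\sigma\ge k+2$ rather than $k+1$ is what kills the $\sqrt{h_{y,j}/h_{x,i}}$ blow-up), superapproximation elsewhere. Your instinct that the layer components are negligible outside their layers and that the fine-mesh factors $h_{x,i}^{-1/2}\sim\sq^{-1/2}(\cdot)$, $h_{y,j}^{-1/2}\sim\sq^{-1/4}(\cdot)$ generate the $\varepsilon$-prefactors is sound, but without the superapproximation lemma the whole estimate loses an order and the proof does not close.
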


\begin{proof}
We shall use the following stability and approximation properties
(\cite[(4.15b)]{Cheng:Jiang:Stynes:2022}, \cite[Lemma 4.8]{Zhu:2dMC}):
\begin{subequations}
	\label{stb:L0}
		\begin{align}
		\label{stab:L0:bilinear}
		|\mathcal{D}^{1}_{ij}(\eta_z;s)|
		&\leq C\sqrt{\frac{h_{y,j}}{h_{x,i}}}
		\norm{z}_{L^\infty(K_{ij})}\norm{s}_{K_{ij}},
		\\
		|\mathcal{D}^{1}_{ij}(\eta_z;s)|
		\label{sup:L2}
		&\leq
   Ch_{x,i}^{-1}
		\left[h_{x,i}^{k+2}\norm{\partial_x^{k+2}z}_{K_{ij}}
		+h_{y,j}^{k+2}\norm{\partial_y^{k+2}z}_{K_{ij}}
		\right]\norm{s}_{K_{ij}}
		\end{align}
\end{subequations}
for any function $z\in H^{k+2}(\Omega)$, $\eta_z:= z- \Pi^-z$, all $s\in \spc$ and $1\le i,j\le N$.

For the smooth component $u_0$, we use \eqref{sup:L2} and Assumption~\ref{ass:2} to get
\[
	|\mathcal{D}^{1}_{ij}(\eta_{u_0};s)|
\leq
C \sqrt{\frac{h_{y,j}}{h_{x,i}}}
\left[h_{x,i}^{k+2}+h_{y,j}^{k+2}\right]
\norm{s}_{K_{ij}}
\leq
C \sqrt{\frac{h_{y,j}}{h_{x,i}}} \,N^{-(k+2)}\norm{s}_{K_{ij}}
\quad \forall K_{ij}\in \Omega_{N}.
\]
Substituting the mesh sizes of \eqref{mesh:size:x:y} into this inequality, 
we get
\begin{align}\label{sup:D1:u0}
\sum_{K_{ij}\in D}  
\Bigg(\frac{{\mathcal D}^1_{ij}(\eta_{u_0};s)}{\norm{s}_{K_{ij}}}\Bigg)^2
&\leq
\begin{cases}
CN^{-2(k+1)} &\text{if }  D=\Omega_{11},\\
C\tau_2N^{-2(k+1)} &\text{if }  D=\Omega_{12},\\
C\sq^{-1}N^{-2(k+1)} &\text{if }  D=\Omega_{21},\\
C\sq^{-1/2}N^{-2(k+1)} &\text{if }  D=\Omega_{22}.
\end{cases}
\end{align}
For the exponential layer component $u_1$, 
from \eqref{stb:L0} and \eqref{reg:u1} one has
\[
|\mathcal{D}^{1}_{ij}(\eta_{u_1};s)|
\leq
\begin{cases}
C\sqrt{\frac{h_{y,j}}{h_{x,i}}}
e^{-\alpha(1-x_i)/\sq}\norm{s}_{K_{ij}}
&\text{if } K_{ij}\in \Omega_{11}\cup \Omega_{12},
\\
Ch_{x,i}^{-1}
(N^{-1}\ln N)^{k+2}
\norm{e^{-\alpha(1-x)/\sq}}_{K_{ij}}
\norm{s}_{K_{ij}}
&\text{if }  K_{ij}\in \Omega_{21}\cup \Omega_{22}.
\end{cases}
\]
Again using the mesh sizes of \eqref{mesh:size:x:y}, we deduce that
\begin{align}\label{sup:D1:u1}
\sum_{K_{ij}\in D}  
\Bigg(\frac{{\mathcal D}^1_{ij}(\eta_{u_1};s)}{\norm{s}_{K_{ij}}}\Bigg)^2
&\leq
\begin{cases}
CN^{-2\sigma+2}
&\text{if }  D=\Omega_{11},\\
C\tau_2N^{-2\sigma+2} 
&\text{if }  D=\Omega_{12},\\
C\sq^{-1}(N^{-1}\ln N)^{2(k+1)}
&\text{if }  D=\Omega_{21},\\
C\sq^{-1/2}(\ln N) (N^{-1}\ln N)^{2(k+1)} 
&\text{if }  D=\Omega_{22}.
\end{cases}
\end{align}
For the characteristic boundary layer component, 
from \eqref{stb:L0} and \eqref{reg:u2} one has
\begin{align*}
&\hspace{-5mm}|\mathcal{D}^{1}_{ij}(\eta_{u_2};s)| \\
&\leq
\begin{cases}
C\sqrt{\frac{h_{y,j}}{h_{x,i}}}
\left(e^{-\delta y_{j-1}/\ssq}+e^{-\delta(1-y_{j})/\ssq}\right)
\norm{s}_{K_{ij}}
&\text{if }  K_{ij}\in \Omega_{11}\cup \Omega_{21},
\\
Ch_{x,i}^{-1}
(N^{-1}\ln N)^{k+2}
\norm{e^{-\delta y/\ssq}+e^{-\delta(1-y)/\ssq}}_{K_{ij}}
\norm{s}_{K_{ij}}
&\text{if }  K_{ij}\in \Omega_{12}\cup \Omega_{22},
\end{cases}
\end{align*}
whence
\begin{align}\label{sup:D1:u2}
\sum_{K_{ij}\in D}  
\Bigg(\frac{{\mathcal D}^1_{ij}(\eta_{u_2};s)}{\norm{s}_{K_{ij}}}\Bigg)^2
&\leq
\begin{cases}
CN^{-2\sigma+2} 
&\text{if }  D=\Omega_{11},\\
C\sq^{1/2} (\ln N)^2 (N^{-1}\ln N)^{2(k+1)} 
&\text{if }  D=\Omega_{12},\\
C\sq^{-1}N^{-2\sigma+2}
&\text{if }  D=\Omega_{21},\\
C\sq^{-1/2}(\ln N) (N^{-1}\ln N)^{2(k+1)} 
&\text{if }  D=\Omega_{22}.
\end{cases}
\end{align}
For the corner layer component, we have
\begin{align*}
|\mathcal{D}^{1}_{ij}(\eta_{u_{12}};s)| 
&\leq
C\sqrt{\frac{h_{y,j}}{h_{x,i}}}
e^{-\alpha(1-x_i)/\sq}
\big(e^{-\delta y_{j-1}/\ssq}+e^{-\delta(1-y_{j})/\ssq}\big)
\norm{s}_{K_{ij}} \\
&\qquad\qquad\text{if }  K_{ij}\in \Omega_{11}\cup \Omega_{21}\cup \Omega_{12}, \\
|\mathcal{D}^{1}_{ij}(\eta_{u_{12}};s)| 
&\leq Ch_{x,i}^{-1}
(N^{-1}\ln N)^{k+2}
\norm{e^{-\alpha(1-x)/\sq}
	(e^{-\delta y/\ssq}+e^{-\delta(1-y)/\ssq})}_{K_{ij}}
\norm{s}_{K_{ij}} \\
&\qquad\qquad \text{if }  K_{ij}\in \Omega_{22},
\end{align*}
which lead to
\begin{align}\label{sup:D1:u12}
\sum_{K_{ij}\in D}  
\Bigg(\frac{{\mathcal D}^1_{ij}(\eta_{u_{12}};s)}{\norm{s}_{K_{ij}}}\Bigg)^2
&\leq
\begin{cases}
CN^{-4\sigma+2} 
&\text{if }  D=\Omega_{11},\\
C\tau_2N^{-2\sigma+2}
&\text{if }  D=\Omega_{12},\\
C\sq^{-1}N^{-2\sigma+2} 
&\text{if }  D=\Omega_{21},\\
C\sq^{-1/2}(N^{-1}\ln N)^{2(k+1)} 
&\text{if }  D=\Omega_{22}.
\end{cases}
\end{align}
The bounds \eqref{sup:D1:u0}--\eqref{sup:D1:u12}
and the hypotheses $\sigma \ge k+2$ and $\sq^{1/4}\ln N\leq C_2$ yield~\eqref{sup:x:left}
and \eqref{sup:x:right}.

For ${\mathcal D}^2_{ij}$  the analysis proceeds 
in an analogous manner;
in layer regions the inverse factor $h^{-1}_{x,i}$ 
that appeared above when $N/2+1\leq i \leq N$
is replaced by the less severe inverse factor $h^{-1}_{y,j}$
when $1\leq j \leq N/4$ and $3N/4+1\leq j \leq N$, which yields some improvements in the bounds.
We list the conclusions below:
\begin{align*}
\sum_{K_{ij}\in D}  
\Bigg(\frac{{\mathcal D}^2_{ij}(\eta_{u_0};s)}{\norm{s}_{K_{ij}}}\Bigg)^2
&\leq
\begin{cases}
CN^{-2(k+1)} &\text{if }  D=\Omega_{11},\\
C\sq^{-1/2}N^{-2(k+1)} &\text{if }  D=\Omega_{12},\\
C\tau_1 N^{-2(k+1)} &\text{if }  D=\Omega_{21},\\
C\sq^{1/2}N^{-2(k+1)} &\text{if }  D=\Omega_{22},
\end{cases}
\\
\sum_{K_{ij}\in D}  
\Bigg(\frac{{\mathcal D}^2_{ij}(\eta_{u_1};s)}{\norm{s}_{K_{ij}}}\Bigg)^2
&\leq
\begin{cases}
CN^{-2\sigma+2} 
&\text{if }  D=\Omega_{11},\\
C\sq^{-1/2} N^{-2\sigma+2} 
&\text{if }  D=\Omega_{12},\\
C\sq (\ln N)^2(N^{-1}\ln N)^{2(k+1)}
&\text{if }  D=\Omega_{21},\\
C\sq^{1/2}(\ln N) (N^{-1}\ln N)^{2(k+1)}
&\text{if }  D=\Omega_{22},
\end{cases}
\\
\sum_{K_{ij}\in D}  
\Bigg(\frac{{\mathcal D}^2_{ij}(\eta_{u_2};s)}{\norm{s}_{K_{ij}}}\Bigg)^2
&\leq
\begin{cases}
CN^{-2\sigma+2} 
&\text{if }  D=\Omega_{11},\\
C\sq^{-1/2}  (N^{-1}\ln N)^{2(k+1)} 
&\text{if }  D=\Omega_{12},\\
C\sq N^{-2\sigma+2}\ln N 
&\text{if }  D=\Omega_{21},\\
C\sq^{1/2}(\ln N) (N^{-1}\ln N)^{2(k+1)}
&\text{if }  D=\Omega_{22},
\end{cases}
\\
\sum_{K_{ij}\in D}  
\Bigg(\frac{{\mathcal D}^2_{ij}(\eta_{u_{12}};s)}{\norm{s}_{K_{ij}}}\Bigg)^2
&\leq
\begin{cases}
CN^{-4\sigma+2} 
&\text{if }  D=\Omega_{11},\\
C\sq^{-1/2}N^{-2\sigma+2} 
&\text{if }  D=\Omega_{12},\\
C\sq N^{-2\sigma+2}\ln N 
&\text{if }  D=\Omega_{21},\\
C\sq^{1/2}(N^{-1}\ln N)^{2(k+1)}
&\text{if }  D=\Omega_{22}.
\end{cases}
\end{align*}
Combining these four inequalities with 
$\ssq\ln N\leq C$ and $\sigma\geq k+2$, one gets \eqref{sup:y}.
\end{proof}

\begin{remark}
The hypothesis	$\sq^{1/4}\ln N\leq C_2$ of Lemma~\ref{superapproximation:element} was also used in \cite[Theorem 4]{Zarin2005}. If one removes it from Lemma~\ref{superapproximation:element},
then one can still use the assumption of Section~\ref{subsec:layer:adapted:meshes} 
that  $\sq^{1/2}\ln N\leq C$, which yields a slightly weaker version 
of this lemma where an additional factor $(\ln N)^{1/2}$ appears in~\eqref{sup:x:left}.
\end{remark}

\section{Error analysis}
\label{sec:analysis}

The error in the LDG solution is 
$\bm e=(e_u,e_p,e_q) :=(u-\uN,p-\pN,q-\qN)$,  so
\begin{equation}\label{error:decomposition}
\bm e = \bm w - \bm W = (\bm w-\bm\Pi \bm w)-(\wN-\bm\Pi \bm w)  = \bm \eta-\bm \xi,
\end{equation}
where we define 
\[
\bm \xi = (\xi_u,\xi_p,\xi_q)
:= (\uN-\prou u,	\pN-\prop p, \qN-\proq q)
\in \sobv^3.
\]

The true solution $\bm w=(u,p,q)$ satisfies
the weak formulation \eqref{compact:form:2d}, 
so one has the Galerkin orthogonality property
$
B(\bm w-\bm W;\vph)=0 \ \forall \vph\in \spc^3.
$
Taking $\vph=\bm\xi$ in this equation and  recalling
 the definition of~$\enorm{\cdot}$ and \eqref{error:decomposition},
one gets
\begin{align}\label{error:equation}
\enorm{\bm\xi}^2 =
B(\bm \xi;\bm\xi)=B(\bm \eta;\bm\xi)
=\mathcal{T}_1(\bm \eta;\bm\xi)+\mathcal{T}_2(\eta_u;\bm\xi)
+\mathcal{T}_3(\bm \eta;\xi_u)+\mathcal{T}_4(\eta_u;\xi_u),
\end{align}
where the terms $\mathcal{T}_i\,(i=1,2,3,4)$ are defined 
as in~\eqref{B:def:2d}.
To estimate $\bm\xi$
we shall bound each  of these $\mathcal{T}_i$.

\subsection{Bounds on the term $\mathcal{T}_i$ for $i=1,2,3$}
\label{sec:T123}

First, using a  Cauchy-Schwarz inequality and the definition
of $\enorm{\cdot}$, one has
\begin{align}\label{T1}
\left|\mathcal{T}_1(\bm \eta;\bm\xi)\right|
&= \left|\varepsilon^{-1}[\dual{\eta_p}{\xi_p}+\dual{\eta_q}{\xi_q}]
+\dual{(b- a_x)\eta_u}{\xi_u}\right|
\nonumber\\
&\leq (\varepsilon^{-1/2}\norm{\eta_p}+\varepsilon^{-1/2}\norm{\eta_q}
+C\norm{\eta_u})\enorm{\bm\xi}.
\end{align}

Next, one can rewrite the term $\mathcal{T}_2$ as
\[
\mathcal{T}_2(\eta_u;\bm\xi)
=
\sum_{K_{ij}\in \Omega_{N}}  {\mathcal D}^1_{ij}(\eta_u;\xi_p)
+\sum_{K_{ij}\in \Omega_{N}}  {\mathcal D}^2_{ij}(\eta_u;\xi_q),
\]
where $ {\mathcal D}^1_{ij}$ and $ {\mathcal D}^2_{ij}$ were defined in~\eqref{bilinear:D}.
Since the terms $\varepsilon^{-1/2}\norm{\xi_p}$
and $\varepsilon^{-1/2}\norm{\xi_q}$ appear in $\enorm{\bm\xi}$, one gets easily 
\begin{align}\label{T2}
\left|\mathcal{T}_2(\eta_u;\bm\xi)\right|
&\leq
\sq^{1/2}\left[ \sum_{K_{ij}\in \Omega_{N}}  
\left(\frac{{\mathcal D}^1_{ij}(\eta_u;\xi_p)}{\norm{\xi_p}_{K_{ij}}}\right)^2
+\sum_{K_{ij}\in \Omega_{N}}  
\left(\frac{{\mathcal D}^2_{ij}(\eta_u;\xi_q)}{\norm{\xi_q}_{K_{ij}}}\right)^2
\right] ^{1/2}
\enorm{\bm\xi}.
\end{align}

In $\mathcal{T}_3$, the definitions \eqref{GR:p} and \eqref{GR:q}
of the Gauss-Radau projection imply that
\[
\mathcal{T}_3(\bm\eta;\xi_u)
=
-\sum_{j=1}^N\dual{(\eta_p)^{-}_{N,y}}{(\xi_u)^{-}_{N,y}}_{J_j}
-\sum_{i=1}^N\dual{(\eta_q)^{-}_{x,N}}{(\xi_u)^{-}_{x,N}}_{I_i}.
\]
Hence, using a 
Cauchy-Schwarz inequality and 
the form of the boundary jump term in the energy norm, 
one has
\begin{align}
\left|\mathcal{T}_3(\bm\eta;\xi_u)\right| &\leq
C\left(\sum_{j=1}^N\norm{(\eta_p)^{-}_{N,y}}^2_{J_j}\right)^{1/2}
\left(\sum_{j=1}^{N}
\dual{\lambda_{1}+\frac12 a_{N,y}}{\jump{\xi_u}^2_{N,y}}_{J_j}\right)^{1/2}
\nonumber\\
&\qquad
+\lambda_2^{-1/2}
\left(\sum_{i=1}^N\norm{(\eta_q)^{-}_{x,N}}^2_{I_i}\right)^{1/2}
\left(\sum_{i=1}^{N}
\dual{\lambda_{2}}{\jump{\xi_u}^2_{x,N}}_{I_i}\right)^{1/2}
\nonumber\\
\label{T3}
&\leq
C
\left[
\left(\sum_{j=1}^N\norm{(\eta_p)^{-}_{N,y}}^2_{J_j}\right)^{1/2}
+\lambda_2^{-1/2}
\left(\sum_{i=1}^N\norm{(\eta_q)^{-}_{x,N}}^2_{I_i}\right)^{1/2}
\right]
\enorm{\bm\xi}.
\end{align}
Note that here we need $\lambda_2>0$; 
we assumed $\lambda_2\geq C_1\sq$ in Section~\ref{sec:LDG}
with the intention of 
invoking~\eqref{etaq:bry:top} to handle the $\lambda_2$ term in~\eqref{T3}.

\subsection{Bounds on  the components of $\mathcal{T}_4(\eta_u;\xi_u)$}
The term $\mathcal{T}_4(\eta_u;\xi_u)$ is much more difficult to handle than $\mathcal{T}_1,\mathcal{T}_2$ or $\mathcal{T}_3$.
We imitate \cite[Section 4]{Cheng:Jiang:Stynes:2022} by decomposing   $\mathcal{T}_4$ into 5 components, viz.,
$
\mathcal{T}_4(\eta_u;\xi_u)
= \sum_{i=1}^5 \mathcal{T}_{4i}(\eta_u;\xi_u),
$
where we set $a_{ij}:=a(x_i,y_j)$ and define
\begin{align*}
\mathcal{T}_{41}(\eta_u;\xi_u) &:=
-\sum_{j=1}^N\sum_{i=1}^{N/2} a_{ij}\mathcal{D}^1_{ij}(\eta_u;\xi_u),
\\
\mathcal{T}_{42}(\eta_u;\xi_u) &:=
-\sum_{j=1}^N\sum_{i=1}^{N/2}
\left[\dual{(a-a_{ij})\eta_u}{(\xi_{u})_{x}}_{K_{ij}}
-\dual{(a_{i,y}-a_{ij})(\eta_u)^{-}_{i,y}}{(\xi_u)^{-}_{i,y}}_{J_{j}} \right.
\\
&\hspace{3cm}
\left.
+\dual{(a_{i-1,y}-a_{ij})(\eta_u)^{-}_{i-1,y}}{(\xi_u)^{+}_{i-1,y}}_{J_{j}}\right],
\\
\mathcal{T}_{43}(\eta_u;\xi_u) &:=
-\sum_{j=1}^N\sum_{i=N/2+1}^N
\dual{a\eta_u}{(\xi_{u})_{x}}_{K_{ij}}
-\sum_{j=1}^N\sum_{i=N/2+1}^N
\dual{a_{i,y}(\eta_u)^{-}_{i,y}}{\jump{\xi_u}_{i,y}}_{J_j},
\\
\mathcal{T}_{44}(\eta_u;\xi_u) &:=
-\sum_{j=1}^N a_{N/2,j}
\dual{(\eta_u^{-})_{N/2,y}}{(\xi_u^{+})_{N/2,y}}_{J_j},
\\
\mathcal{T}_{45}(\eta_u;\xi_u) &:=
\sum_{j=1}^{N}
\dual{\lambda_{1}(\eta_u)^{-}_{N,y}}{(\xi_u)^{-}_{N,y}}_{J_j}
+\sum_{i=1}^{N}
\dual{\lambda_{2}(\eta_u)^{-}_{x,N}}{(\xi_u)^{-}_{x,N}}_{I_i}.
\end{align*}
Note that in $\mathcal{T}_{42}$ we use $(\eta_u)^{-}_{0,y}=0$,
as defined in Section~\ref{sec:superapprox}.

By a Cauchy-Schwarz inequality we get
\begin{align}\label{T41}
\left|\mathcal{T}_{41}(\eta_u;\xi_u)\right|
&\leq
C\left[\sum_{K_{ij}\in \Omega_{11}\cup \Omega_{12}}
\left(\frac{{\mathcal D}^1_{ij}(\eta_u;\xi_u)}{\norm{\xi_u}_{K_{ij}}}\right)^2
\right]^{1/2}
\enorm{\bm\xi}.
\end{align}

From \cite[Theorem 4.76]{Schwab1998} and a scaling argument, one has
the following anisotropic inverse and trace inequalities:
\[
\norm{v_x}_{K_{ij}}\leq Ch_{x,i}^{-1}\norm{v}_{K_{ij}}
\ \text{ and }\ 
\norm{v^{-}_{i,y}}_{J_{j}} +
\norm{v^{+}_{i-1,y}}_{J_{j}}\leq Ch_{x,i}^{-1/2}\norm{v}_{K_{ij}}
\ \text{ for all } v\in \spc,
\]
where $C>0$ is independent of $v$ and of the mesh element $K_{ij}$.
Hence a Cauchy-Schwarz inequality and 
$a-a_{ij}=O(N^{-1})$ for $(x,y)\in K_{ij}$
yield
\begin{align}\label{T42}
\left|\mathcal{T}_{42}(\eta_u;\xi_u)\right|
&\leq
C\sum_{j=1}^N\sum_{i=1}^{N/2} N^{-1}
\left[h^{-1}_{x,i}\norm{\eta_u}_{K_{ij}}
+h^{-1/2}_{x,i}
\left(\norm{(\eta_u)^{-}_{i-1,y}}_{J_{j}} + \norm{(\eta_u)^{-}_{i,y}}_{J_{j}}\right)
\right]\norm{\xi_u}_{K_{ij}}
\nonumber\\
&\leq C\left[\sum_{j=1}^N\sum_{i=1}^{N/2}
\left(\norm{\eta_u}^2_{K_{ij}}
+N^{-1}\norm{(\eta_u)^{-}_{i,y}}^2_{J_{j}}
\right)\right]^{1/2}\enorm{\bm\xi}.
\end{align}

The term $\mathcal{T}_{43}(\eta_{u};\xi_u)$ is bounded by
\begin{align}
\left|\mathcal{T}_{43}(\eta_{u};\xi_u)\right| 
&\leq 
C\norm{\eta_u}_{\Omega_{21}\cup \Omega_{22}}\norm{(\xi_u)_x}_{\Omega_{21}\cup \Omega_{22}}
+C\left(\sum_{j=1}^{N}
\norm{(\eta_u)_{N,y}^{-}}_{J_j}^2
\right)^{1/2}
\left(\sum_{j=1}^{N}
\norm{\jump{\xi_u}_{N,y}}_{J_j}^2\right)^{1/2}
\nonumber\\
&\qquad
+C
\left(\sum_{j=1}^{N}\sum_{i=N/2+1}^{N-1}
h_{x,i+1}\norm{(\eta_u)_{i,y}^{-}}_{J_j}^2
\right)^{1/2}
\left(\sum_{j=1}^{N}\sum_{i=N/2+1}^{N-1}
h_{x,i+1}^{-1}\norm{\jump{\xi_u}_{i,y}}_{J_j}^2\right)^{1/2}
\nonumber\\
&\hspace{-15mm}
\leq C\left(\sum_{j=1}^{N}
\norm{(\eta_u)_{N,y}^{-}}_{J_j}^2
\right)^{1/2}
\enorm{\bm\xi}
+
C\left(\norm{\eta_u}^2_{\Omega_{21}\cup \Omega_{22}}
+\sum_{j=1}^{N}\sum_{i=N/2+1}^{N-1}
h_{x,i+1}\norm{(\eta_u)_{i,y}^{-}}_{J_j}^2
\right)^{1/2}
\nonumber\\
&\hspace{-15mm}\qquad
\times\left(\norm{(\xi_u)_x}^2_{\Omega_{21}\cup \Omega_{22}}
+\sum_{j=1}^{N}\sum_{i=N/2+1}^{N-1}
h_{x,i+1}^{-1}\norm{\jump{\xi_u}_{i,y}}_{J_j}^2
\right)^{1/2}
\nonumber\\
&\hspace{-15mm}\leq
C\left(\sum_{j=1}^{N}
\norm{(\eta_u)_{N,y}^{-}}_{J_j}^2
\right)^{1/2}\enorm{\bm\xi}
+C\sq^{-1/2}\left(\norm{\eta_u}^2_{\Omega_{21}\cup \Omega_{22}}
+\sum_{j=1}^{N}\sum_{i=N/2+1}^{N-1}
h_{x,i+1}\norm{(\eta_u)_{i,y}^{-}}_{J_j}^2
\right)^{1/2}
\nonumber\\
\label{T43}
&\hspace{-15mm}\qquad
\times\left[\sq^{-1}\norm{\eta_p}^2+\enorm{\bm\xi}^2
+\sq\sup_{s\in \spc}
\sum_{K_{ij}\in \Omega_{21}\cup \Omega_{22}}  
\left(\frac{{\mathcal D}^1_{ij}(\eta_u;s)}{\norm{s}_{K_{ij}}}\right)^2
\right]^{1/2},
\end{align}
where we used the following inequalities
from \cite[proof of Lemma 3.4]{Cheng:Jiang:Stynes:2022}:
	\begin{align*}
	\norm{(\xi_u)_x}_{K_{ij}}
	&
	\leq 
	C\varepsilon^{-1}
	\left(\norm{\eta_p}_{K_{ij}}+\norm{\xi_p}_{K_{ij}}\right)
	+\frac{C|\mathcal{D}^{1}_{ij}(\eta_u;s_1)|}{\norm{s_1}_{K_{ij}}}\,,
	\\
	h_{x,i}^{-1}\norm{\jump{\xi_u}_{i-1,y}}^2_{J_j}
	&\leq 
	C\left[
	\varepsilon^{-2}\left(\norm{\eta_p}_{K_{ij}}^2+\norm{\xi_p}_{K_{ij}}^2\right)
	+\norm{(\xi_u)_x}_{K_{ij}}^2
	+\left(\frac{|\mathcal{D}^{1}_{ij}(\eta_u;s_2)|}{\norm{s_2}_{K_{ij}}}\right)^2
	\right]
	\end{align*}
for two particular functions $s_1,s_2\in\spc$ that are chosen to have certain properties.

Using this pair of inequalities again in a similar manner, we get
\begin{align}
\left|\mathcal{T}_{44}(\eta_{u};\xi_u)\right|
&\leq C\left(\sum_{j=1}^{N}\norm{(\eta_u)_{N/2,y}^{-}}_{J_j}^2
\right)^{1/2}
\left(\sum_{j=1}^{N}\norm{(\xi_u)_{N/2,y}^{+}}_{J_j}^2
\right)^{1/2}
\label{T44}\\
&
\hspace{-17mm}\leq 
C\sqrt{\frac{\tau_1}{\sq}}
\left(\sum_{j=1}^{N}\norm{(\eta_u)_{N/2,y}^{-}}_{J_j}^2
\right)^{1/2}
\left(\sq^{-1}\norm{\eta_p}^2+\enorm{\bm\xi}^2
+ \sq\sup_{s\in \spc}
\sum_{K_{ij}\in \Omega_{21}\cup \Omega_{22}}  
\left(\frac{{\mathcal D}^1_{ij}(\eta_u;s)}{\norm{s}_{K_{ij}}}\right)^2
\right)^{1/2},\nonumber
\end{align}
where we also used the case $i=N/2$ of the inequality
\begin{align*}
\sum_{j=1}^N\norm{(\xi_u)^+_{i,y}}^2_{J_j}
&\leq C\tau_1
\Bigg[\sum_{j=1}^N\sum_{\ell=i+1}^N \norm{(\xi_u)_x}^2_{K_{\ell j}}
+\sum_{j=1}^N\sum_{\ell=i+1}^{N-1} h^{-1}_{x,\ell+1}
\norm{\jump{\xi_u}_{\ell,y}}_{J_j}^2
\Bigg]
+3\sum_{j=1}^N\norm{\jump{\xi_u}_{N,y}}_{J_j}^2
\end{align*}
that was derived in \cite[proof of Lemma 3.5]{Cheng:Jiang:Stynes:2022}.

Finally, a Cauchy-Schwarz inequality yields
\begin{equation}\label{T45}
\left|\mathcal{T}_{45}(\eta_u;\xi_u)\right|
\le C\Bigg(\sum_{j=1}^N\norm{(\eta_u)^{-}_{N,y}}^2_{J_j}
+\sum_{i=1}^N\norm{(\eta_u)^{-}_{x,N}}^2_{I_i}\Bigg)^{1/2}
\enorm{\bm\xi}
\end{equation}
because $0\leq \lambda_{1}\leq C$ and $0< \lambda_{2}\leq C$.

%
%
%

\subsection{Uniform convergence and uniform supercloseness}
\label{sec:theorem}

We now state and prove the main result of the paper.

\begin{theorem}\label{thm:superconvergent}
Assume that $\sq^{1/4}\ln N\leq C$, that Assumption~\ref{assumption:reg:2d} is valid for $m=k$,
and that $\sigma\geq k+2$ in~\eqref{tau}.
	Let $\bm w=(u,p,q)=(u,\varepsilon u_x,\varepsilon u_y)$
	be the solution of problem \eqref{cd:spp:parabolic}.
	Let $\wN=(\uN,\pN,\qN)\in \spc^3$ be the numerical solution of the LDG method \eqref{compact:form:2d}.
	Then there exists a constant $C>0$ for which the following bounds hold true. 
	One has the energy-norm error estimate
	\begin{align}\label{energy:error}
	\enorm{\bm w-\wN}\leq  C(N^{-1}\ln N)^{k+1/2}
	\end{align}
	and the superclose property
    \begin{align}\label{super:energy:error}
    \enorm{\bm\Pi \bm w-\wN}\leq  C(N^{-1}\ln N)^{k+1},
    \end{align}
	where $\bm\Pi \bm w=(\Pi^- u,\Pi_x^+ p, \Pi_y^+ q) \in \spc^3$ 
	is the local Gauss-Radau projection of $\bm w $ defined in Section~\ref{sec:GRtrue}. 
	In particular, \eqref{super:energy:error} implies the optimal-order $L^2$ error estimate
	\begin{align}\label{optimal:L2:error}
	\lnorm{\bm w-\wN} \leq C(N^{-1}\ln N)^{k+1}.
	\end{align}
\end{theorem}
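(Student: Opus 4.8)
The plan is to work from the error equation \eqref{error:equation}, which writes $\enorm{\bm\xi}^2$ as $\mathcal{T}_1(\bm\eta;\bm\xi)+\mathcal{T}_2(\eta_u;\bm\xi)+\mathcal{T}_3(\bm\eta;\xi_u)+\mathcal{T}_4(\eta_u;\xi_u)$, and to bound every term on the right either by $C(N^{-1}\ln N)^{k+1}\enorm{\bm\xi}$ or by a multiple of $\enorm{\bm\xi}^2$ plus $C(N^{-1}\ln N)^{2(k+1)}$. Applying a Young inequality to each factor $C(N^{-1}\ln N)^{k+1}\enorm{\bm\xi}$ and absorbing all $\enorm{\bm\xi}^2$ contributions into the left-hand side (with the Young parameters chosen small enough that the absorbed coefficient stays below $1$) will yield $\tfrac12\enorm{\bm\xi}^2\le C(N^{-1}\ln N)^{2(k+1)}$, which is the supercloseness bound \eqref{super:energy:error} because $\bm\xi=\wN-\bm\Pi\bm w$. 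The estimates \eqref{energy:error} and \eqref{optimal:L2:error} will then follow from the splitting $\bm w-\wN=\bm\eta-\bm\xi$ and a triangle inequality.

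The routine part is the collection $\mathcal{T}_1,\mathcal{T}_2,\mathcal{T}_3$ together with the subterms $\mathcal{T}_{41},\mathcal{T}_{42},\mathcal{T}_{45}$. For these I would insert the component bounds of Lemmas~\ref{lemma:GR1} and~\ref{lemma:GR2} and the superapproximation bounds \eqref{sup:x:left}--\eqref{sup:y} into \eqref{T1}, \eqref{T2}, \eqref{T3}, \eqref{T41}, \eqref{T42} and \eqref{T45}. The $\sq^{\pm 1/2}$ weights are arranged so that each yields the clean factor $C(N^{-1}\ln N)^{k+1}\enorm{\bm\xi}$: e.g.\ $\sq^{-1/2}\norm{\eta_p}$ and $\sq^{-1/2}\norm{\eta_q}$ in $\mathcal{T}_1$ via \eqref{etap:L2} and \eqref{etaq:L2}; the product of the $\sq^{1/2}$ in \eqref{T2} with the factor $\sq^{-1/2}(N^{-1}\ln N)^{k+1}$ from \eqref{sup:x:right} (the $\mathcal{D}^2$ contribution from \eqref{sup:y} is even smaller, contributing $\sq^{1/4}(N^{-1}\ln N)^{k+1}$); and $\lambda_2^{-1/2}\le C\sq^{-1/2}$ multiplied by the $\sq^{1/2}$-weighted bound \eqref{etaq:bry:top} in $\mathcal{T}_3$, which is exactly where the hypothesis $\lambda_2\ge C_1\sq$ is needed.

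The crux, and the step I expect to be the main obstacle, is the pair $\mathcal{T}_{43},\mathcal{T}_{44}$, in which $\enorm{\bm\xi}^2$ itself appears on the right of \eqref{T43} and \eqref{T44} through the stability bounds for $\norm{(\xi_u)_x}$ and $h_{x,\cdot}^{-1}\norm{\jump{\xi_u}}^2$. One must show that the dangerous prefactors $\sq^{-1/2}$ and $\sqrt{\tau_1/\sq}$ are cancelled by the $\varepsilon$-weighted layer bounds. In $\mathcal{T}_{43}$ the factor $\sq^{-1/2}$ multiplies $\big(\norm{\eta_u}^2_{\Omega_{21}\cup\Omega_{22}}+\sum_{i>N/2}h_{x,i+1}\norm{(\eta_u)^-_{i,y}}^2\big)^{1/2}$; the first piece is controlled by \eqref{etau:L2:layer}, while for the edge sum one uses $\sum_{i>N/2}h_{x,i+1}=O(\tau_1)=O(\sq\ln N)$ together with \eqref{etau:bry:left:x}, taking care to compare $\sq\ln N\cdot N^{-2(k+1)}$ directly against the target $\sq(N^{-1}\ln N)^{2(k+1)}=\sq N^{-2(k+1)}(\ln N)^{2(k+1)}$ (legitimate since $\ln N\le(\ln N)^{2(k+1)}$) rather than crudely bounding $\sq\ln N\le C\sq^{1/2}$; this gives $\sq^{-1/2}(\cdots)^{1/2}\le C(N^{-1}\ln N)^{k+1}$. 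The bracket $\big[\sq^{-1}\norm{\eta_p}^2+\enorm{\bm\xi}^2+\sq\sup_s\sum_{\Omega_{21}\cup\Omega_{22}}(\cdots)^2\big]^{1/2}$ is bounded by $C(N^{-1}\ln N)^{k+1}+\enorm{\bm\xi}$ using \eqref{etap:L2} and \eqref{sup:x:right}, so $\mathcal{T}_{43}$ reduces to $C(N^{-1}\ln N)^{k+1}\big[(N^{-1}\ln N)^{k+1}+\enorm{\bm\xi}\big]$, and Young gives $\tfrac14\enorm{\bm\xi}^2+C(N^{-1}\ln N)^{2(k+1)}$. For $\mathcal{T}_{44}$ the prefactor is $\sqrt{\tau_1/\sq}\,(\sum_j\norm{(\eta_u)^-_{N/2,y}}^2)^{1/2}=O((\ln N)^{1/2})\cdot C[N^{-(k+1)}+\sq^{1/4}(N^{-1}\ln N)^{k+1}]$, and here the hypothesis $\sq^{1/4}\ln N\le C$ is precisely what reduces $\sq^{1/4}(\ln N)^{1/2}$ to $O(1)$, again producing a clean $C(N^{-1}\ln N)^{k+1}$ and hence $\tfrac18\enorm{\bm\xi}^2+C(N^{-1}\ln N)^{2(k+1)}$.

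Finally, for \eqref{energy:error} I would evaluate $\enorm{\bm\eta}$ through the explicit formula for the energy norm (legitimate since $u$ is continuous, so the interior jumps of $\eta_u$ coincide with those of $-\prou u$): the volume terms $\sq^{-1/2}\norm{\eta_p}$, $\sq^{-1/2}\norm{\eta_q}$, $\norm{\eta_u}$ and the boundary terms are all $O((N^{-1}\ln N)^{k+1})$ by Lemmas~\ref{lemma:GR1}--\ref{lemma:GR2}, whereas the dominant contribution is the interior jump term, which by \eqref{etau:jump} is only $O((N^{-1}\ln N)^{k+1/2})$. Thus $\enorm{\bm\eta}\le C(N^{-1}\ln N)^{k+1/2}$, and with the already-established $\enorm{\bm\xi}\le C(N^{-1}\ln N)^{k+1}$ the triangle inequality $\enorm{\bm w-\wN}\le\enorm{\bm\eta}+\enorm{\bm\xi}$ gives \eqref{energy:error}. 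For \eqref{optimal:L2:error} I would use $\lnorm{\cdot}\le\enorm{\cdot}$ to obtain $\lnorm{\bm\xi}\le C(N^{-1}\ln N)^{k+1}$, and note that $\lnorm{\bm\eta}$ retains only the volume terms, which are $O((N^{-1}\ln N)^{k+1})$; the triangle inequality then yields the optimal $L^2$ rate.
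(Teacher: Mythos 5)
Your proposal is correct and follows essentially the same route as the paper: the same error equation \eqref{error:equation}, the same term-by-term bounds \eqref{T1}--\eqref{T45} fed by Lemmas~\ref{lemma:GR1}--\ref{superapproximation:element}, the same cancellation of the $\sq^{-1/2}$ and $\sqrt{\tau_1/\sq}$ prefactors in $\mathcal{T}_{43},\mathcal{T}_{44}$ using $\tau_1=O(\sq\ln N)$ and the hypothesis $\sq^{1/4}\ln N\le C$, and the same final absorption of $\enorm{\bm\xi}^2$ followed by the triangle inequality with \eqref{etau:jump} supplying the dominant $(N^{-1}\ln N)^{k+1/2}$ interior-jump contribution to $\enorm{\bm\eta}$. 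The only cosmetic difference is that you apply Young's inequality term by term, whereas the paper first assembles the single composite bound \eqref{A:general:estimate:B} before absorbing; this does not change the argument.
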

\begin{proof}
Combining \eqref{T1}--\eqref{T45}, 
we get the bound
\begin{align}
\label{A:general:estimate:B}
&|B(\bm \eta;\bm\xi)|
\leq C\Bigg\{
\varepsilon^{-1/2}\norm{\eta_p}+\varepsilon^{-1/2}\norm{\eta_q}
+\norm{\eta_u}
+\sq^{-1/2}\norm{\eta_u}_{\Omega_{21}\cup \Omega_{22}}
\nonumber\\
&\qquad
+\Bigg(\sum_{j=1}^N\norm{(\eta_u)^{-}_{N,y}}^2_{J_j}\Bigg)^{1/2}
+\Bigg(\sum_{i=1}^N\norm{(\eta_u)^{-}_{x,N}}^2_{I_i}\Bigg)^{1/2}
+\sqrt{\frac{\tau_1}{\sq}}
\Bigg(\sum_{j=1}^{N}\norm{(\eta_u)_{N/2,y}^{-}}_{J_j}^2
\Bigg)^{1/2}
\nonumber\\
&\qquad
+\sq^{-1/2}\left(
\sum_{j=1}^{N}\sum_{i=N/2+1}^{N-1}
h_{x,i+1}\norm{(\eta_u)_{i,y}^{-}}_{J_j}^2
\right)^{1/2}
+N^{-1/2}\Bigg(\sum_{j=1}^N\sum_{i=1}^{N}
\norm{(\eta_u)^{-}_{i,y}}^2_{J_{j}}
\Bigg)^{1/2}
\nonumber\\
&\hspace{-15mm}\qquad
+\left(\sum_{j=1}^N\norm{(\eta_p)^{-}_{N,y}}^2_{J_j}\right)^{1/2}
+\lambda_2^{-1/2}
\left(\sum_{i=1}^N\norm{(\eta_q)^{-}_{x,N}}^2_{I_i}\right)^{1/2}
+\left[\sum_{K_{ij}\in \Omega_{11}\cup \Omega_{12}}
\left(\frac{{\mathcal D}^1_{ij}(\eta_u;\xi_u)}{\norm{\xi_u}_{K_{ij}}}\right)^2
\right]^{1/2}
\nonumber\\
&\qquad
\left. +
\sq^{1/2}\left[ \sum_{K_{ij}\in \Omega_{N}}  
\left(\frac{{\mathcal D}^1_{ij}(\eta_u;\xi_p)}{\norm{\xi_p}_{K_{ij}}}\right)^2
+\sum_{K_{ij}\in \Omega_{N}}  
\left(\frac{{\mathcal D}^2_{ij}(\eta_u;\xi_q)}{\norm{\xi_q}_{K_{ij}}}\right)^2
\right] ^{1/2}
\right\}
\nonumber\\
%
&\qquad\qquad
\times\left\{
\sq^{-1/2}\norm{\eta_p}+\enorm{\bm\xi}
+\sq^{1/2}
\left[\sup_{s\in \spc}
\sum_{K_{ij}\in \Omega_{21}\cup \Omega_{22}}  
\left(\frac{{\mathcal D}^1_{ij}(\eta_u;s)}{\norm{s}_{K_{ij}}}\right)^2
\right]^{1/2}
\right\}.
\end{align}
Invoking \eqref{etau:bry:left:x}, one has
\begin{align*}
\sq^{-1/2}\left(
\sum_{j=1}^{N}\sum_{i=N/2+1}^{N-1}
h_{x,i+1}\norm{(\eta_u)_{i,y}^{-}}_{J_j}^2
\right)^{1/2}
&\leq \sqrt{\frac{\tau_1}{\sq}}
\max_{1\leq i\leq N}
\Bigg(\sum_{j=1}^{N}\norm{(\eta_u)_{i,y}^{-}}_{J_j}^2
\Bigg)^{1/2}
\\
&\leq C(\ln N)^{1/2}
\left[N^{-(k+1)}+\sq^{1/4}(N^{-1}\ln N)^{k+1}\right]
\\
&
\leq C(N^{-1}\ln N)^{k+1},
\end{align*}
since $\sq^{1/4}(\ln N)^{1/2} \le \sq^{1/4}\ln N\leq C$ by hypothesis. 
Substituting this inequality,   
$\lambda_2\geq C_1\varepsilon$, $\sigma\geq k+2$
and the results of Lemmas~\ref{lemma:GR1},
\ref{lemma:GR2}
and~\ref{superapproximation:element}
into \eqref{A:general:estimate:B},
we obtain 
\[
\left|B(\bm \eta;\bm\xi)\right|
\leq C(N^{-1}\ln N)^{k+1}\left[\enorm{\bm\xi}
	+(N^{-1}\ln N)^{k+1}\right].
\]
Thus from \eqref{error:equation} it follows that
$\enorm{\bm\xi}^2\leq C(N^{-1}\ln N)^{2(k+1)}+\frac12\enorm{\bm\xi}^2$,
whence
\begin{align}
\label{super:xi:1}\enorm{\bm\xi}&\leq C(N^{-1}\ln N)^{k+1},
\end{align}
which proves \eqref{super:energy:error}.

The energy-norm error estimate \eqref{energy:error}
and $L^2$ error estimate \eqref{optimal:L2:error} 
now follow immediately from \eqref{super:xi:1}  and 
Lemmas~\ref{lemma:GR1} and \ref{lemma:GR2} 
since $ \bm w - \bm W = \bm \eta-\bm \xi$.
\end{proof}

\begin{remark}[Extension to other layer-adapted meshes]\label{Remark:B:BS:mesh}
We can extend our analysis 
to other types of layer-adapted meshes
such as those of  Bakhvalov-Shishkin type (BS-mesh)
and Bakhvalov type (B-mesh).
To achieve this, one needs properties analogous to those 
proved in Lemmas~\ref{lemma:GR1},
\ref{lemma:GR2} and~\ref{superapproximation:element},
which can be derived if one has 
\[
\max_{1\leq i,j \leq N}\left\{h_{x,i},h_{y,j}\right\}\leq CN^{-1}
\ \text{ and } \ 
	\max\left\{\left[\psi_1\left(\frac12\right)\right]^{\sigma},
	\left[\psi_2\left(\frac14\right)\right]^{\sigma}
	\right\}\leq CN^{-(k+2),
}
\]
where the $\psi_i$ ($i=1,2$)
are the mesh-characterizing functions (see \cite[Section 2.1]{Linss10})
in each coordinate direction.
These two inequalities can be ensured 
for the B-mesh by choosing a sufficiently large
$\sigma$, but for the BS-mesh 
our analysis needs to assume 
$\ssq\leq N^{-1}$ to derive the first inequality. 
The numerical results in Section~\ref{subsec:assumption}
show that the condition $\ssq\leq N^{-1}$ is not artificial; 
only when it is satisfied does
the BS-mesh attain the optimal convergence rates
for $\lnorm{\bm w-\wN}$ and $\enorm{\bm\Pi \bm w-\wN}$.

See \cite[Remark 4.4]{Cheng:Jiang:Stynes:2022} for more details
of this extension.
\end{remark}

\section{Numerical experiments}
\label{sec:experiments}
\setcounter{equation}{0}

In this section,
we present some numerical results
for the LDG method applied to the following problem:
\begin{align*}			
-\varepsilon \Delta u + (1+x)(1+y) u_x + \left(\frac32+y\right)u&= f 
\ \text{ in } \Omega=(0,1)^2,
\\
u &= 0\ \text{ on } \partial \Omega,
\end{align*}
where the right-hand side $f$ is chosen such that
\[
u(x,y)
=\left(\sin\frac{\pi x}{2}
-\frac{e^{-(1-x)/\varepsilon}-e^{-1/\varepsilon}}{1-e^{-1/\varepsilon}}\right)
\left(1+y^4\right)
\frac{\left(1-e^{-y/\ssq}\right)\left(1-e^{-(1-y)/\ssq}\right)}{\left(1-e^{-1/(2\ssq)}\right)^2}
\]
is the true solution. Obviously this solution satisfies Assumption \ref{assumption:reg:2d} for any non-negative integer $m$.

In all our computations 
we take $\sigma = k+2$, $\alpha=1$ and $\delta=1.4$ 
in \eqref{tau:used} and choose 
the penalty parameters $\lambda_1=0$ and $\lambda_{2}=\sq$ in the LDG method.
The discrete linear systems are solved using LU decomposition, i.e., 
a direct linear solver.
All integrals are evaluated using the 5-point Gauss-Legendre quadrature rule.

We compute the three errors
\begin{align}\label{three:errors}
\lnorm{\bm{w}-\wN},\quad \enorm{\Pi \bm w-\wN}
\text{ and } \quad \enorm{\bm w-\wN}.
\end{align}
Below, $E_N$ is used to denote each of these errors when $N$ elements are used in each coordinate direction.

On the Shishkin mesh our theoretical bounds are of the form ${\interleave \text{error} \interleave} \le C(N^{-1}\ln N)^{r_S}$,
so we estimate $r_S$ by computing the numerical convergence rate
\[
r_{S,N} := \frac{ \log (E_N/E_{2N}) }{\log \big(2\ln N/\ln(2N)\big) }
\]
for various values of $N$.
On the other types of layer-adapted meshes (see Section~\ref{subsec:BS:B:mesh}) where one would expect 
${\interleave \text{error} \interleave}\le CN^{-r_2}$, we estimate $r_2$ in the standard way by computing the 
numerical convergence rate $r_{2,N}:=\log (E_N/E_{2N}) /\log 2$.

In Section \ref{subsec:S:mesh}, we compute all three errors of \eqref{three:errors} on Shishkin meshes
to confirm the sharpness of Theorem \ref{thm:superconvergent}.
In Section \ref{subsec:BS:B:mesh}, we test the uniform convergence and 
supercloseness of the LDG method on the BS-mesh and B-mesh.
In  Section \ref{subsec:assumption}, we investigate the effect of the condition $\ssq\leq N^{-1}$ 
on the convergence rates of the LDG method on the BS-mesh (recall Remark~\ref{Remark:B:BS:mesh}).


\subsection{Uniform convergence and supercloseness}
\label{subsec:S:mesh}

For $k=2$ and  $N=64$, 
Figure \ref{figure:1} displays the numerical solution 
and the pointwise error  $U-u$ computed by the LDG method on a Shishkin mesh 
for $\varepsilon=10^{-4}$ and $\varepsilon=10^{-8}$.
One sees that the LDG method produces quite good results --- 
the layers are sharp and no obvious oscillations appear anywhere in the domain.

\begin{figure}[h]
	\begin{minipage}{0.49\linewidth}
		\centerline{\includegraphics[width=2.6in,height=2.4in]{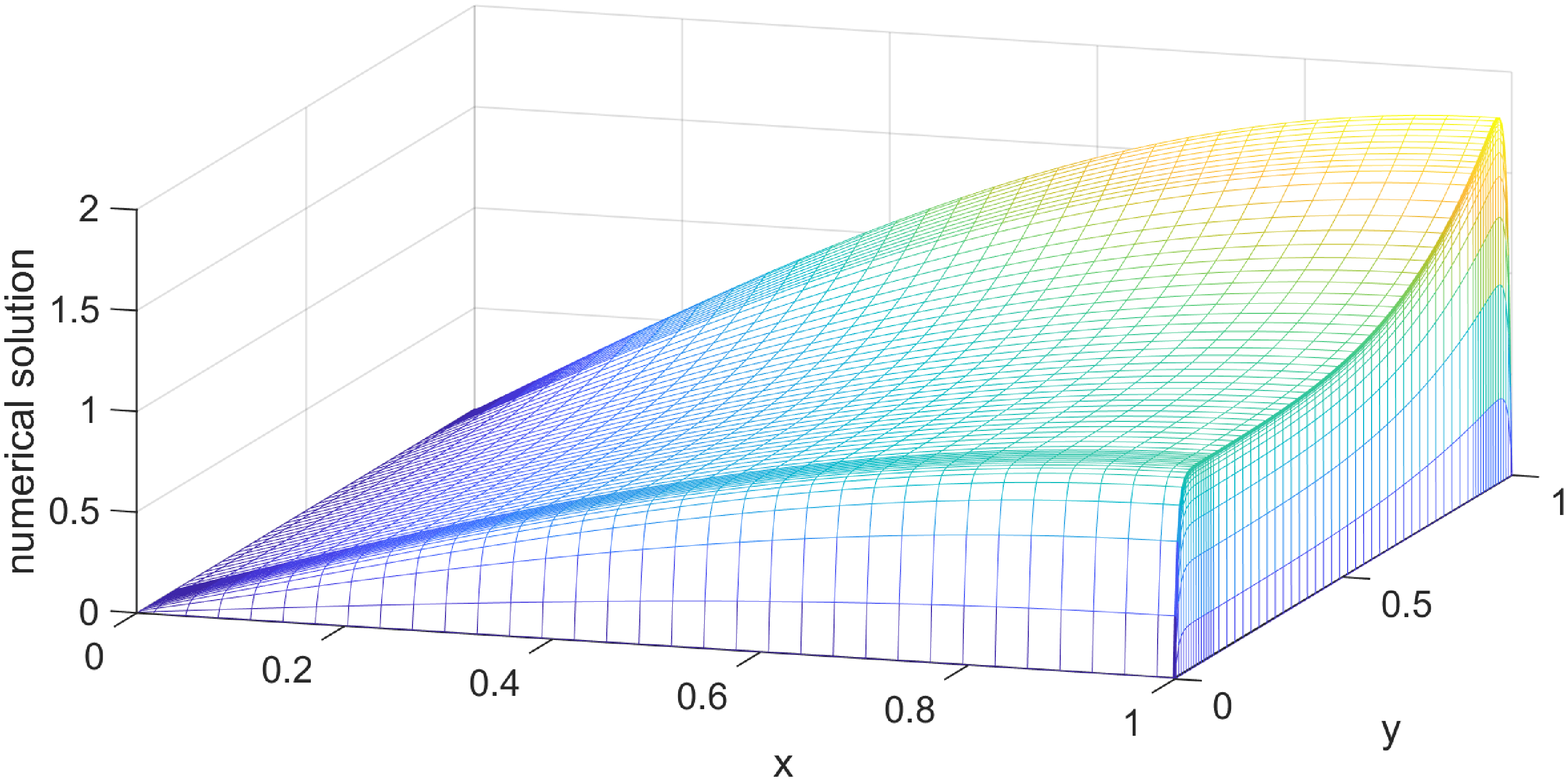}}
		\centerline{(a)}
		\centerline{\includegraphics[width=2.6in,height=2.4in]{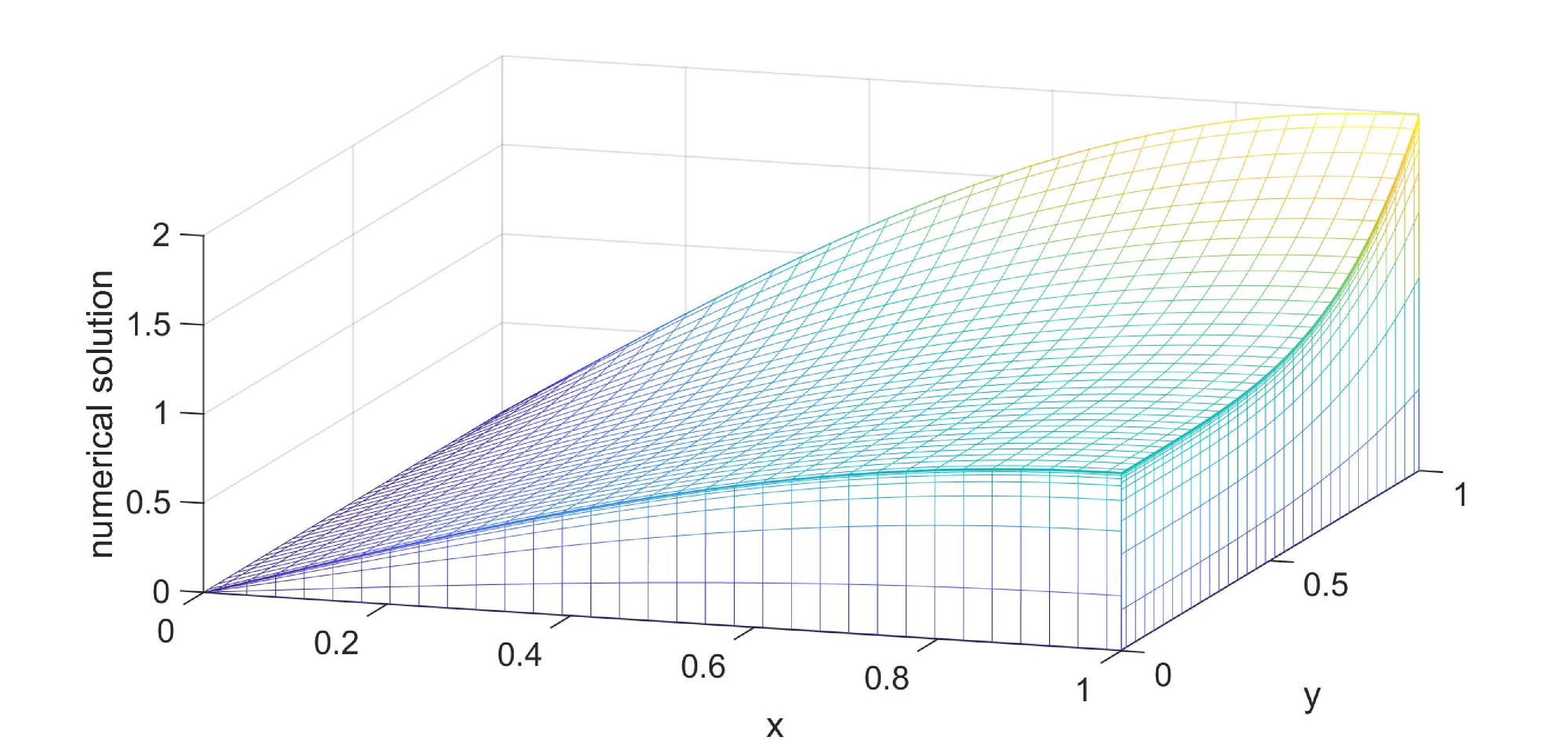}}
		\centerline{(b)}
	\end{minipage}
	\begin{minipage}{0.49\linewidth}
		\centerline{\includegraphics[width=2.6in,height=2.4in]{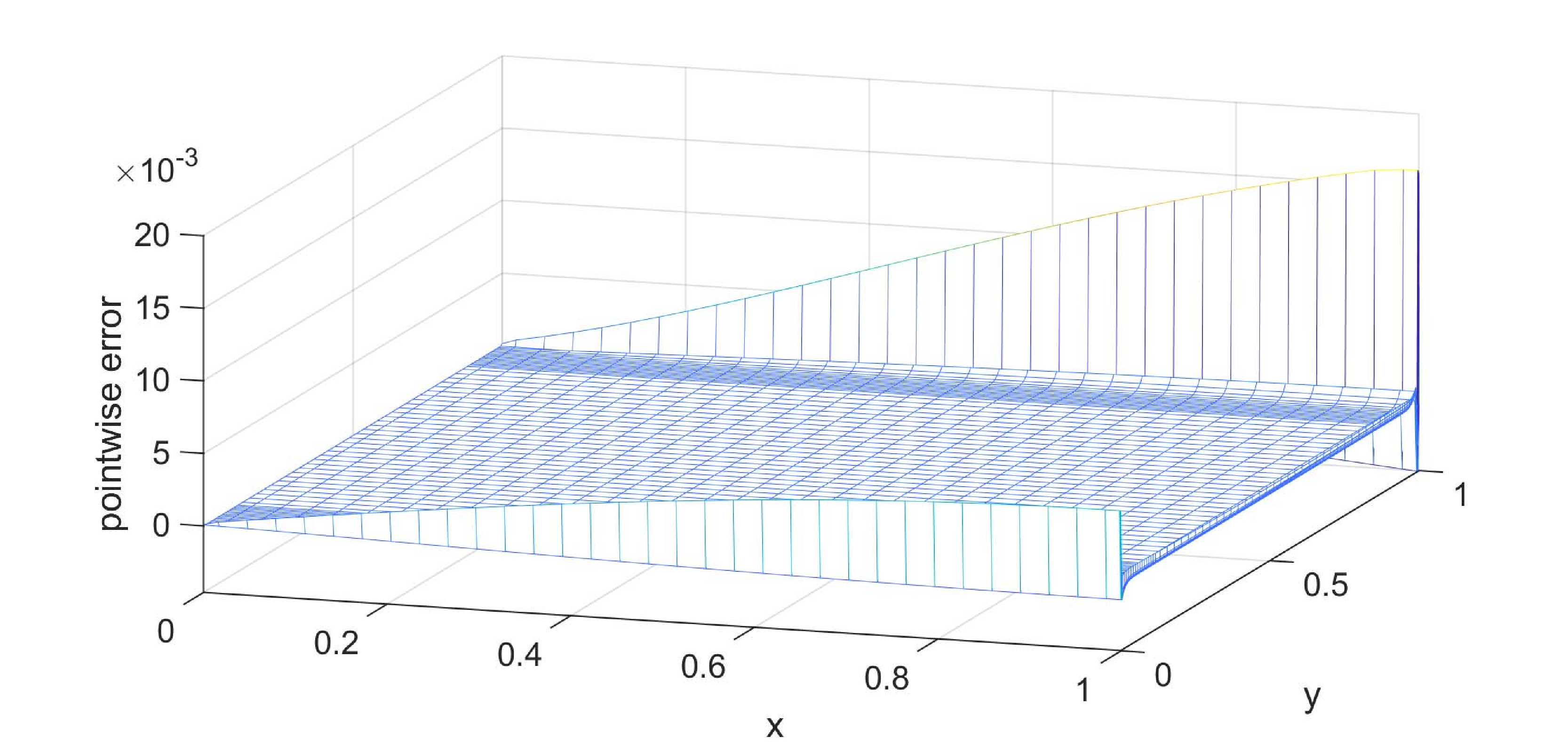}}
		\centerline{(c)}
		\centerline{\includegraphics[width=2.6in,height=2.4in]{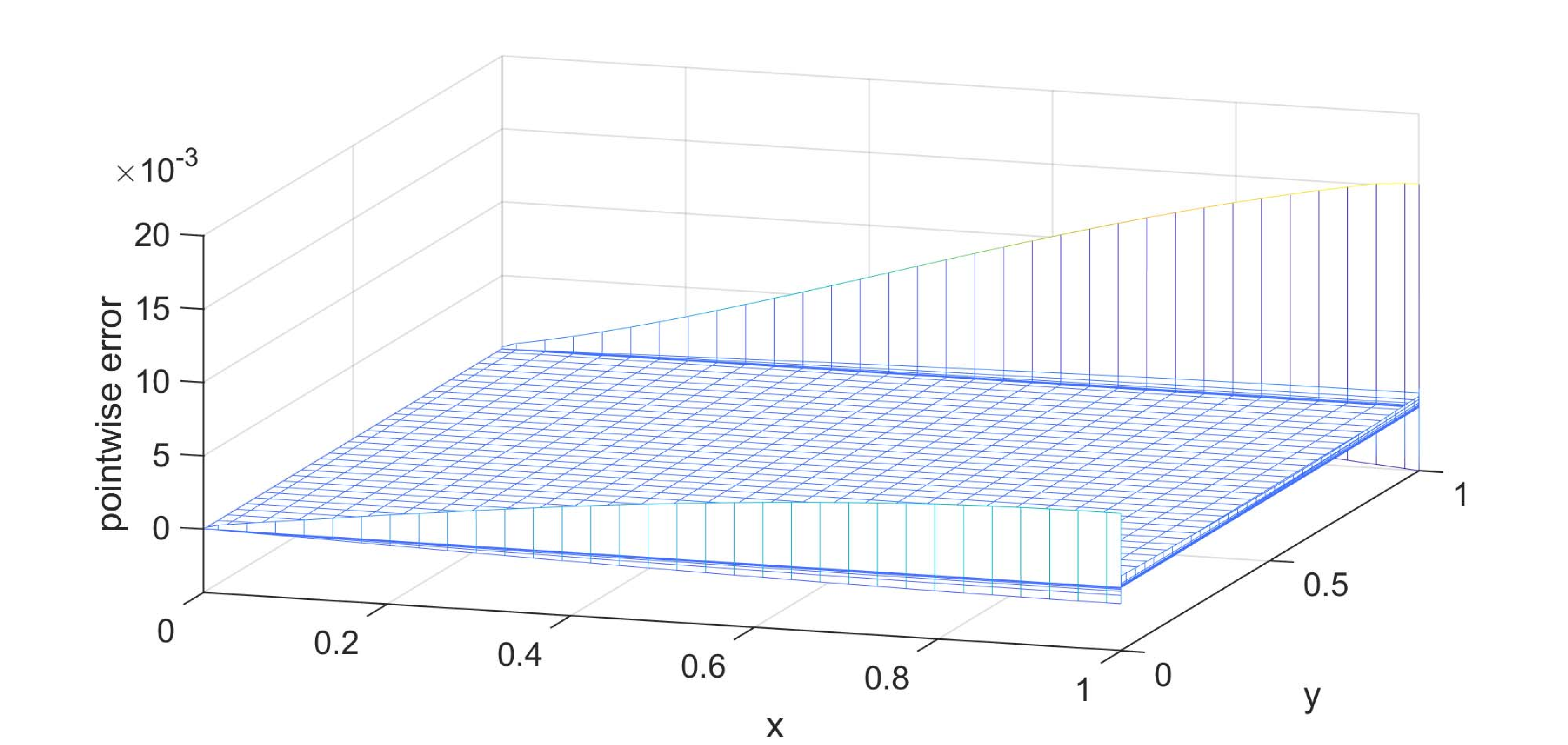}}
		\centerline{(d)}
	\end{minipage}
	\caption{
		(a) numerical solution $U$ for $\varepsilon=10^{-4}$,
		(b) numerical solution $U$ for $\varepsilon=10^{-8}$,
		(c) pointwise error $U-u$ for $\varepsilon=10^{-4}$,
		(d) pointwise error $U-u$ for $\varepsilon=10^{-8}$.
	}
	\label{figure:1}
\end{figure}

For $\varepsilon=10^{-8}$, in
Table \ref{table:S-mesh:rate} we list the three errors of \eqref{three:errors} for $k=0,1,2,3$
and various values of~$N$.
One observes that the energy-norm error
$\enorm{\bm w-\wN}$ in the true solution converges at a rate of
$O((N^{-1}\ln N)^{k+1/2})$, while the $L^2$ error $\lnorm{\bm{w}-\wN}$
and the energy-norm error $\enorm{\Pi \bm w-\wN}$ in the Gauss-Radau projection of the true solution
both converge at a rate of $O((N^{-1}\ln N)^{k+1})$.
These results agree exactly with the rates predicted 
in Theorem~\ref{thm:superconvergent}.
These rates include the piecewise-constant case $k=0$,
although our theoretical analysis doesn't cover this variant of the LDG.

To test the the robustness of these errors 
with respect to the singular perturbation parameter $\varepsilon$, 
we fix $k=2$ and  $N=128$
then test $\sq = 10^{-4},10^{-5},\dots,10^{-10}$.
Table~\ref{table:S-mesh:robust} 
shows that all three LDG solution errors of \eqref{three:errors}
are robust as $\sq\rightarrow 0$, which agrees with our theory.

\subsection{Two other layer-adapted meshes}
\label{subsec:BS:B:mesh}

In this subsection we test the numerical convergence rates of the LDG method on
the other layer-adapted meshes mentioned in Remark~\ref{Remark:B:BS:mesh},
viz., the BS-mesh and B-mesh.
Now the mesh points $(x_i,y_j)$ for $i,j=0,1,\dots, N$ are defined by
\begin{align*}
x_i&:=
\begin{dcases}
2(1-\tau_1)\frac{i}{N}   &\qquad\text{for } i=0,1,...,N/2,
\\
1-\frac{\sigma\varepsilon}{\alpha}\varphi_1\left(1-\frac{i}{N}\right)
&\qquad\text{for } i=N/2+1,N/2+2,...,N,
\end{dcases}
\\
y_j&:=
\begin{dcases}
\frac{\sigma\ssq}{\delta}\varphi_2\left(\frac{j}{N}\right)   &\text{for } j=0,1,...,N/4,
\\
\tau_2 + 2(1-2\tau_2)\left(\frac{j}{N}-\frac14\right)
&\text{for } j=N/4+1,N/4+2,...,3N/4,
\\
1-\frac{\sigma\ssq}{\delta}\varphi_2\left(1-\frac{j}{N}\right)
&\text{for } j=3N/4+1,3N/4+2,...,N,
\end{dcases}
\end{align*}
where the mesh-generating function $(\varphi_1,\varphi_2)$ is given by
\[
\big(\varphi_1(t),\varphi_2(t)\big)	
=\begin{cases}
\big(-\ln(1-2(1-N^{-1})t),-\ln(1-4(1-N^{-1})t)\big)
&\text{for the BS-mesh,}\\
\big(-\ln(1-2(1-\sq)t),-\ln(1-4(1-\ssq)t)\big)    
&\text{for the B-mesh.}
\end{cases}
\]

Tables \ref{table:B-mesh:rate} and \ref{table:BS-mesh:rate} 
show that for both these meshes
the three errors of \eqref{three:errors} 
converge at rates of $O(N^{-(k+1)})$, 
$O(N^{-(k+1)})$ and $O(N^{-(k+1/2)})$ respectively.
Furthermore, the robustness of these errors with respect to~$\varepsilon$ 
is demonstrated by Tables \ref{table:B-mesh:robust} and \ref{table:BS-mesh:robust}
where $k=2$ and $N=128$.

\begin{table}[ht]
	\footnotesize
	\centering
	\caption{Shishkin mesh}
	\smallskip
	\label{table:S-mesh:rate}
	\begin{tabular}{cccccccc}
		\toprule
		& $N$ & $\lnorm{\bm{w}-\wN}$ & rate & $\enorm{\Pi \bm{w}-\wN}$ & rate  
		& $\enorm{\bm{w}-\wN}$ & rate 
		\\
		\midrule
		$\mathcal{P}^0$
		& 4   & 5.3536e-01 & --      & 7.9351e-01 & --      & 1.3312e+00 & --      \\
		& 8   & 3.5217e-01 & 1.4558 & 5.9786e-01 & 0.9841 & 1.1198e+00 & 0.6010 \\
		& 16  & 2.2387e-01 & 1.1174 & 4.1177e-01 & 0.9197 & 9.0791e-01 & 0.5174 \\
		& 32  & 1.3930e-01 & 1.0093 & 2.7262e-01 & 0.8774 & 7.2416e-01 & 0.4811 \\
		& 64  & 8.4617e-02 & 0.9759 & 1.7412e-01 & 0.8777 & 5.6927e-01 & 0.4711 \\
		& 128 & 5.0108e-02 & 0.9721 & 1.0728e-01 & 0.8985  & 4.4091e-01 & 0.4740 \\
		& 256 & 2.8977e-02 & 0.9787 & 6.3918e-02 & 0.9254 & 3.3674e-01 & 0.4817 \\
		$\mathcal{P}^1$
		& 4   & 1.3850e-01 & --      & 2.0298e-01 & --      & 3.8324e-01 & --      \\
		& 8   & 8.0738e-02 & 1.8760 & 1.3090e-01 & 1.5249 & 2.5007e-01 & 1.4840 \\
		& 16  & 4.0740e-02 & 1.6870 & 7.0952e-02 & 1.5105 & 1.4509e-01 & 1.3426 \\
		& 32  & 1.7817e-02 & 1.7596 & 3.3128e-02 & 1.6205 & 7.6860e-02 & 1.3518 \\
		& 64  & 6.9845e-03 & 1.8333 & 1.3712e-02 & 1.7268 & 3.8200e-02 & 1.3687 \\
		& 128 & 2.5214e-03 & 1.8903 & 5.1503e-03 & 1.8168 & 1.7999e-02 & 1.3962 \\
		& 256 & 8.5645e-04 & 1.9295 & 1.7939e-03 & 1.8846 & 8.0968e-03 & 1.4275 \\
		$\mathcal{P}^2$
		& 4   & 4.3661e-02 & --      & 6.8852e-02 & --      & 1.2545e-01 & --      \\
		& 8   & 2.2194e-02 & 2.3520 & 3.6989e-02 & 2.1598 & 6.9999e-02 & 2.0280 \\
		& 16  & 7.9497e-03 & 2.5321 & 1.4701e-02 & 2.2756 & 2.8911e-02 & 2.1809 \\
		& 32  & 2.2294e-03 & 2.7051 & 4.6012e-03 & 2.4715 & 9.8296e-03 & 2.2954 \\
		& 64  & 5.2857e-04 & 2.8177 & 1.1956e-03 & 2.6383 & 2.9669e-03 & 2.3450 \\
		& 128 & 1.1159e-04 & 2.8857 & 2.6906e-04 & 2.7671 & 8.2314e-04 & 2.3788 \\
		& 256 & 2.1761e-05 & 2.9211 & 5.4376e-05 & 2.8573 & 2.1323e-04 & 2.4137 \\
		$\mathcal{P}^3$
		& 4   & 1.4625e-02 & --      & 2.3754e-02 & --      & 4.3459e-02 & --      \\
		& 8   & 6.1231e-03 & 3.0263 & 1.0418e-02 & 2.8649 & 1.9649e-02 & 2.7593 \\
		& 16  & 1.5761e-03 & 3.3471 & 3.0020e-03 & 3.0688 & 5.7651e-03 & 3.0242 \\
		& 32  & 2.8816e-04 & 3.6153  & 6.2676e-04 & 3.3329 & 1.2680e-03 & 3.2220 \\
		& 64  & 4.1807e-05 & 3.7791 & 1.0209e-04 & 3.5525 & 2.3327e-04 & 3.3143 \\
		& 128 & 5.2232e-06 & 3.8589 & 1.3776e-05 & 3.7160 & 3.8117e-05 & 3.3609 \\
		& 256 & 5.9333e-07 & 3.8868 & 1.6310e-06 & 3.8129 & 5.6677e-06 &3.4057       \\
		\bottomrule
	\end{tabular}
\end{table}

\begin{table}[ht]
	\small
	\centering
	\caption{Robustness: Shishkin mesh.}
	\smallskip
	\label{table:S-mesh:robust}
	\begin{tabular}{cccc}
		\toprule
		$\varepsilon$ & $\lnorm{\bm{w}-\wN}$  & $\enorm{\Pi \bm{w}-\wN}$   
		& $\enorm{\bm{w}-\wN}$  
		\\
		\midrule
		$10^{-4}$  & 2.3131e-04 & 3.2951e-04 & 8.2653e-04 \\
		$10^{-5}$  & 1.5557e-04 & 2.8749e-04 & 8.2341e-04 \\
		$10^{-6}$  & 1.2578e-04 & 2.7446e-04 & 8.2315e-04 \\
		$10^{-7}$  & 1.1513e-04 & 2.7036e-04 & 8.2313e-04 \\
		$10^{-8}$  & 1.1159e-04 & 2.6906e-04 & 8.2314e-04 \\
		$10^{-9}$  & 1.1044e-04 & 2.6874e-04 & 8.2317e-04 \\
		$10^{-10}$ & 1.1007e-04 & 2.6982e-04 & 8.2100e-04 \\
		\bottomrule
	\end{tabular}
\end{table}

\begin{table}[ht]
	\footnotesize
	\centering
	\caption{Bakhvalov-type mesh.}
	\smallskip
	\label{table:B-mesh:rate}
	\begin{tabular}{cccccccc}
		\toprule
		& $N$ & $\lnorm{\bm{w}-\wN}$ & rate & $\enorm{\Pi \bm{w}-\wN}$ & rate  
		& $\enorm{\bm{w}-\wN}$ & rate 
		\\
		\midrule
		$\mathcal{P}^1$
		& 4   & 1.4205e-01 & --      & 2.6525e-01 & --      & 3.9844e-01 & --      \\
		& 8   & 4.1333e-02 & 1.7810  & 6.0664e-02 & 2.1284 & 1.4973e-01 & 1.4120 \\
		& 16  & 1.1469e-02 & 1.8496 & 1.5490e-02 & 1.9695 & 5.6414e-02 & 1.4082 \\
		& 32  & 3.0832e-03 & 1.8953 & 3.9969e-03 & 1.9544 & 2.0875e-02 & 1.4343 \\
		& 64  & 8.0743e-04 & 1.9330 & 1.0201e-03 & 1.9702 & 7.5866e-03 & 1.4603 \\
		& 128 & 2.0782e-04 & 1.9580 & 2.5811e-04 & 1.9826 & 2.7233e-03 & 1.4781 \\
		& 256 & 5.3026e-05 & 1.9705 & 6.5095e-05 & 1.9874 & 9.7055e-04 & 1.4885 \\
		$\mathcal{P}^2$
		& 4   & 6.0231e-02 & --      & 1.0894e-01 & --      & 1.4710e-01 & --      \\
		& 8   & 7.4372e-03 & 3.0177 & 1.1629e-02 & 3.2278 & 2.5974e-02 & 2.5017 \\
		& 16  & 9.8442e-04 & 2.9174 & 1.4508e-03 & 3.0027 & 4.8192e-03 & 2.4302 \\
		& 32  & 1.3178e-04 & 2.9012 & 1.8653e-04 & 2.9594 & 8.8924e-04 & 2.4382 \\
		& 64  & 1.7377e-05 & 2.9228 & 2.3824e-05 & 2.9689 & 1.6148e-04 & 2.4612 \\
		& 128 & 2.2664e-06 & 2.9387 & 3.0268e-06 & 2.9766 & 2.8978e-05 & 2.4784 \\
		& 256 & 2.9464e-07 & 2.9434 & 3.8870e-07 & 2.9611  & 5.1793e-06 & 2.4841 \\
		\bottomrule
	\end{tabular}
\end{table}

\begin{table}[ht]
	\footnotesize
	\centering
	\caption{Bakhvalov-Shishkin mesh.}
	\smallskip
	\label{table:BS-mesh:rate}
	\begin{tabular}{cccccccc}
		\toprule
		& $N$ & $\lnorm{\bm{w}-\wN}$ & rate & $\enorm{\Pi \bm{w}-\wN}$ & rate  
		& $\enorm{\bm{w}-\wN}$ & rate 
		\\
		\midrule
		$\mathcal{P}^1$
		& 4   & 1.0089e-01 & --      & 1.2825e-01 & --      & 2.8827e-01 & --      \\
		& 8   & 3.4055e-02 & 1.5669 & 4.4547e-02 & 1.5256 & 1.2773e-01 & 1.1744 \\
		& 16  & 1.0344e-02 & 1.7191 & 1.3467e-02 & 1.7259 & 5.2037e-02 & 1.2954 \\
		& 32  & 2.9197e-03 & 1.8249 & 3.7360e-03 & 1.8499 & 2.0032e-02 & 1.3772 \\
		& 64  & 7.8481e-04 & 1.8954 & 9.8633e-04 & 1.9214 & 7.4294e-03 & 1.4310 \\
		& 128 & 2.0479e-04 & 1.9382 & 2.5378e-04 & 1.9585 & 2.6946e-03 & 1.4631 \\
		& 256 & 5.2629e-05 & 1.9602 & 6.4539e-05 & 1.9753 & 9.6539e-04 & 1.4809 \\
		$\mathcal{P}^2$
		& 4   & 2.1897e-02 & --      & 3.2863e-02 & --      & 6.9537e-02 & --      \\
		& 8   & 4.6249e-03 & 2.2432 & 6.8197e-03 & 2.2687 & 1.8666e-02 & 1.8974 \\
		& 16  & 7.7361e-04 & 2.5797 & 1.1144e-03 & 2.6134 & 4.1143e-03 & 2.1817 \\
		& 32  & 1.1449e-04 & 2.7563 & 1.6005e-04 & 2.7997 & 8.2215e-04 & 2.3232  \\
		& 64  & 1.5820e-05 & 2.8555 & 2.1531e-05 & 2.8940 & 1.5526e-04 & 2.4047 \\
		& 128 & 2.1107e-06 & 2.9059 & 2.8010e-06 & 2.9424 & 2.8410e-05 & 2.4502 \\
		& 256 & 2.7838e-07 & 2.9226 & 3.7039e-07 & 2.9188 & 5.1142e-06 & 2.4738 \\
		\bottomrule
	\end{tabular}
\end{table}

\begin{table}[ht]
	\small
	\centering
	\caption{Robustness: Bakhvalov-type mesh.}
	\smallskip
	\label{table:B-mesh:robust}
	\begin{tabular}{cccc}
		\toprule
		$\varepsilon$ & $\lnorm{\bm{w}-\wN}$  & $\enorm{\Pi \bm{w}-\wN}$   
		& $\enorm{\bm{w}-\wN}$  
		\\
		\midrule
		$10^{-4}$  & 6.6843e-06 & 5.8786e-06 & 2.8893e-05 \\
		$10^{-5}$  & 4.8148e-06 & 4.5978e-06 & 2.9044e-05 \\
		$10^{-6}$  & 3.4535e-06 & 3.7717e-06 & 2.9019e-05 \\
		$10^{-7}$  & 2.6507e-06 & 3.2808e-06 & 2.8891e-05 \\
		$10^{-8}$  & 2.2664e-06 & 3.0268e-06 & 2.8978e-05 \\
		$10^{-9}$  & 2.1128e-06 & 2.9056e-06 & 2.8976e-05 \\
		$10^{-10}$ & 2.1110e-06 & 4.4365e-06 & 2.8483e-05 \\
		\bottomrule
	\end{tabular}
\end{table}

\begin{table}[ht]
	\small
	\centering
	\caption{Robustness: Bakhvalov-Shishkin mesh.}
	\smallskip
	\label{table:BS-mesh:robust}
	\begin{tabular}{cccc}
		\toprule
		$\varepsilon$ & $\lnorm{\bm{w}-\wN}$  & $\enorm{\Pi \bm{w}-\wN}$   
		& $\enorm{\bm{w}-\wN}$  
		\\
		\midrule
		$10^{-4}$  & 6.7112e-06 & 5.8800e-06 & 2.8381e-05 \\
		$10^{-5}$  & 4.5022e-06 & 4.2654e-06 & 2.8451e-05 \\
		$10^{-6}$  & 3.0409e-06 & 3.3159e-06 & 2.8423e-05 \\
		$10^{-7}$  & 2.3689e-06 & 2.9373e-06 & 2.8411e-05 \\
		$10^{-8}$  & 2.1107e-06 & 2.8010e-06 & 2.8410e-05 \\
		$10^{-9}$  & 2.0249e-06 & 2.8245e-06 & 2.8535e-05 \\
		$10^{-10}$ & 2.0229e-06 & 4.7939e-06 & 2.8153e-05 \\
		\bottomrule
	\end{tabular}
\end{table}

\subsection{Assumption $\ssq\leq N^{-1}$ on the convergence rates}
\label{subsec:assumption}

In Remark~\ref{Remark:B:BS:mesh} an additional assumption  $\ssq\leq N^{-1}$ was needed
for the BS-mesh. 
To investigate whether this condition affects the numerical convergence rates,
we fix $k=1$ (so $\sigma=3$) and test $N=60, 80,...,220$.
The numerical convergence rates are computed by
$r_{2,N_1}:=\log (E_{N_1}/E_{N_2}) /\log (N_2/N_1)$.
We choose $\ssq=0.02$, $0.01$, $0.0025$ so that our assumption  in~\eqref{tau:used}  
that $\ssq\le \delta(4\sigma\ln N)^{-1}=1.4(12\ln N)^{-1}$ is always satisfied,
but the three different regimes   
$\ssq> N^{-1}, \ \ssq\approx N^{-1}$ and $\ssq< N^{-1}$ are tested;
see Figure \ref{fig:relation}.

\begin{figure}[h]
	\begin{minipage}{0.995\linewidth}
		\centerline{\includegraphics[width=1\textwidth]{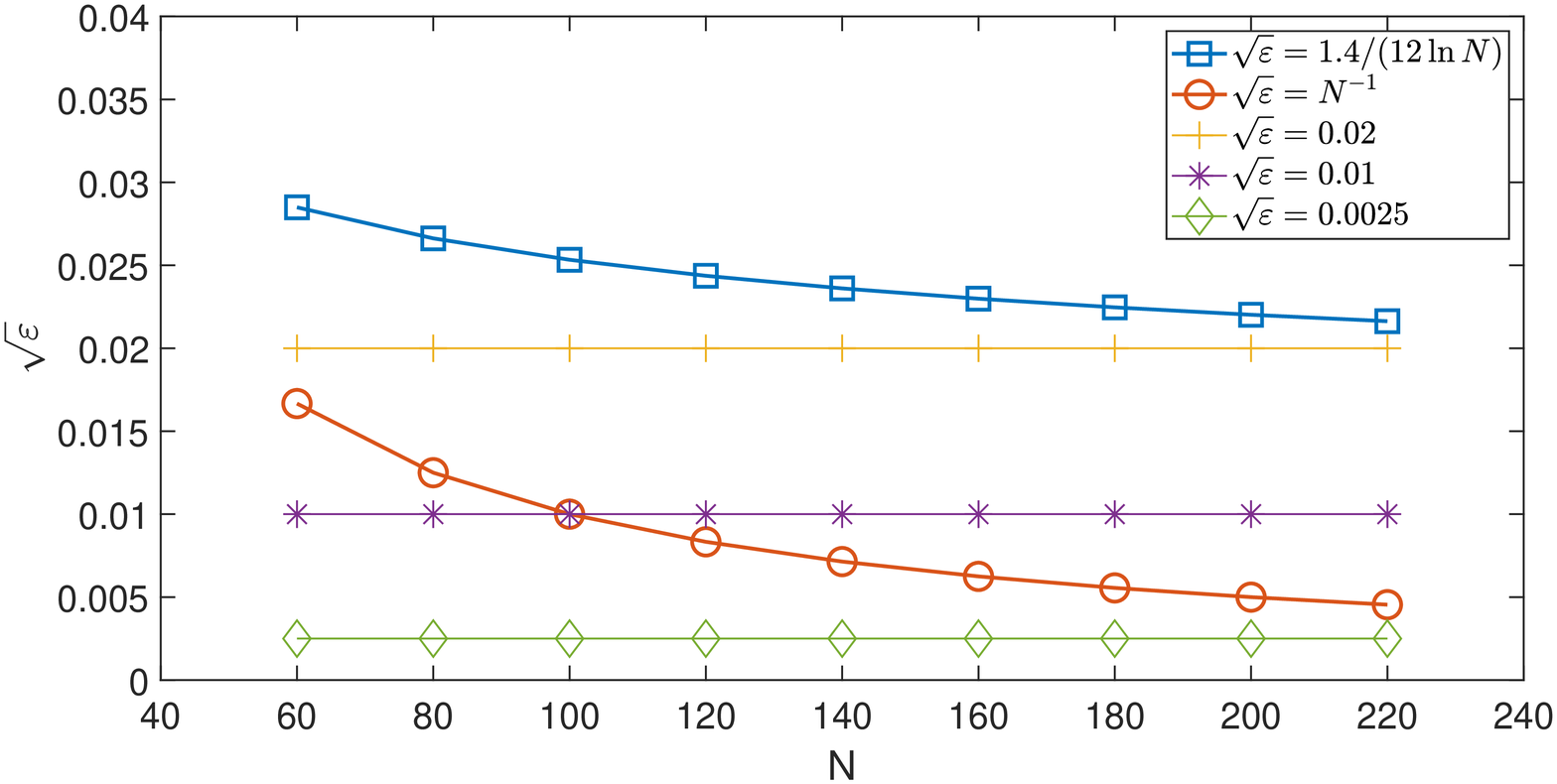}}
	\end{minipage}
	\caption{Three cases $\ssq=0.0025,0.01,0.02$.}
	\label{fig:relation}
\end{figure}

The numerical results on the BS-mesh in Table~\ref{table:BS-mesh:relation} are quite revealing.
When $\ssq = 0.02 > N^{-1}$,
there is almost no convergence for the first two errors of~\eqref{three:errors}, 
but when $\ssq = 0.0025< N^{-1}$,
optimal convergence rates are clearly seen for all three errors in~\eqref{three:errors}.
In the intermediate case $\ssq = 0.01$,
we see that the convergence rate of the first two errors of \eqref{three:errors} is optimal 
when $\ssq = 0.01\leq N^{-1}$ but then decreases sharply when this condition is violated.
The energy-norm error  $\enorm{\bm w-\wN}$ seems to be less sensitive to the relative sizes of $\varepsilon$ and $N^{-1}$,
perhaps because the boundary jump of the error dominates the whole
energy-error.
Our conclusion is that the condition $\ssq\leq N^{-1}$ is 
necessary both theoretically and in practice to obtain optimal convergence rates on the BS-mesh.

On the Shishkin and B-meshes, further numerical experiments (not presented here) show that the numerical convergence rates  are unaffected by whether or not  $\ssq\leq N^{-1}$.

\begin{table}[ht]
	\footnotesize
	\centering
	\caption{Bakhvalov-Shishkin mesh.}
	\smallskip
	\label{table:BS-mesh:relation}
	\begin{tabular}{cccccccc}
		\toprule
		& $N$ & $\lnorm{\bm{w}-\wN}$ & rate & $\enorm{\Pi \bm{w}-\wN}$ & rate  
		& $\enorm{\bm{w}-\wN}$ & rate 
		\\
		\midrule
		$\ssq=0.02$
		& 60   & 1.7777e-03 & --      & 1.7139e-03 & --      & 7.8797e-03 & --      \\
		& 80   & 1.1408e-03 & 1.5421 & 1.0893e-03 & 1.5756 & 5.1875e-03 & 1.4531 \\
		& 100  & 8.7516e-04 & 1.1878 & 8.2101e-04 & 1.2670 & 3.7573e-03 & 1.4455 \\
		& 120  & 7.5271e-04 & 0.8267 & 6.9237e-04 & 0.9347 & 2.8962e-03 & 1.4277 \\
		& 140 & 6.9128e-04 & 0.5522 & 6.2531e-04 & 0.6609 & 2.3338e-03 & 1.4005 \\
		& 160 & 6.5764e-04 & 0.3736 & 5.8745e-04 & 0.4678 & 1.9451e-03 & 1.3645 \\
		& 180 & 6.3746e-04 & 0.2646 & 5.6431e-04 & 0.3411 & 1.6649e-03 & 1.3204 \\
		& 200 & 6.2424e-04 & 0.1990 & 5.4906e-04 & 0.2600 & 1.4565e-03 & 1.2694 \\
		& 220 & 6.1483e-04 & 0.1594 & 5.3828e-04 & 0.2081 & 1.2975e-03 & 1.2128 \\
		$\ssq=0.01$
		& 60   & 1.4210e-03 & --      & 1.4497e-03 & --      & 8.0173e-03 & --      \\
		& 80   & 8.4287e-04 & 1.8156 & 8.5484e-04 & 1.8360 & 5.2694e-03 & 1.4589 \\
		& 100  & 5.6521e-04 & 1.7909 & 5.7091e-04 & 1.8091 & 3.7981e-03 & 1.4672 \\
		& 120  & 4.1217e-04 & 1.7319 & 4.1441e-04 & 1.7572 & 2.9041e-03 & 1.4720 \\
		& 140 & 3.2039e-04 & 1.6341 & 3.2005e-04 & 1.6761 & 2.3136e-03 & 1.4746 \\
		& 160 & 2.6227e-04 & 1.4991 & 2.5968e-04 & 1.5652 & 1.8999e-03 & 1.4755 \\
		& 180 & 2.2409e-04 & 1.3359 & 2.1946e-04 & 1.4287 & 1.5968e-03 & 1.4752 \\
		& 200 & 1.9834e-04 & 1.1586 & 1.9187e-04 & 1.2754 & 1.3672e-03 & 1.4736 \\
		& 220 & 1.8061e-04 & 0.9822 & 1.7251e-04 & 1.1159 & 1.1884e-03 & 1.4710 \\
		$\ssq=0.0025$
		& 60   & 1.0500e-03 & --      & 1.2158e-03 & --      & 8.1234e-03 & --      \\
		& 80   & 6.1490e-04 & 1.8601 & 7.0359e-04 & 1.9012 & 5.3457e-03 & 1.4546 \\
		& 100  & 4.0609e-04 & 1.8593 & 4.6045e-04 & 1.9001 & 3.8557e-03 & 1.4642 \\
		& 120  & 2.8958e-04 & 1.8546 & 3.2591e-04 & 1.8956 & 2.9490e-03 & 1.4704 \\
		& 140 & 2.1778e-04 & 1.8484 & 2.4355e-04 & 1.8897 & 2.3493e-03 & 1.4748 \\
		& 160 & 1.7030e-04 & 1.8416 & 1.8939e-04 & 1.8833 & 1.9285e-03 & 1.4781 \\
		& 180 & 1.3720e-04 & 1.8449 & 1.5183e-04 & 1.8769 & 1.6199e-03 & 1.4806 \\
		& 200 & 1.1316e-04 & 1.8284 & 1.2467e-04 & 1.8706 & 1.3856e-03 & 1.4826 \\
		& 220 & 9.5119e-05 & 1.8224 & 1.0437e-04 & 1.8646 & 1.2029e-03 & 1.4842 \\
		\bottomrule
	\end{tabular}
\end{table}

\begin{remark}
Recall that our error analysis (in particular, the derivation of~\eqref{T3}) needed the lower bound
$\lambda_2\ge C_1\varepsilon$ on the penalty parameter~$\lambda_2$.
Numerical tests show that whether one imposes this condition or one takes $\lambda_2=0$
makes little difference to the numerical errors and rates of convergence in
the three errors of~\eqref{three:errors}.
Thus it seems likely that the condition $\lambda_2\ge C_1\varepsilon$ is merely an artifact of our analysis,
but its implementation is easy and does not harm the accuracy of the numerical results. 
\end{remark}

\section{Concluding remarks}
\label{sec:conclusion}

In this paper we examined the LDG method on a Shishkin mesh, 
using tensor-product piecewise polynomials
of degree $k>0$, for
a singularly perturbed convection-diffusion problem on the unit square in~$\mathbb{R}^2$
whose solution exhibits characteristic and exponential boundary layers.
We obtained a $O((N^{-1}\ln N)^{k+1/2})$ energy-norm bound,
uniformly in the singular perturbation parameter,
for the error between the computed solution and the true solution.
Furthermore, we derived a $O((N^{-1}\ln N)^{k+1})$ supercloseness bound
for the energy-norm difference between the computed solution 
and a local Gauss-Radau projection of the true solution into the finite element space.
As a consequence of this supercloseness property, 
one obtains an optimal $O(N^{-(k+1)})$ convergence rate  for the $L^2$ error
of the computed solution.
These results are based on a large number of technical estimates 
of the approximations from the finite element space of various solution components 
on different subregions of the domain. 
Numerical experiments show that our theoretical results are sharp.

\section*{Statements and Declarations}
No competing interests.

%


\bibliography{ChengStynes}

\providecommand{\bysame}{\leavevmode\hbox to3em{\hrulefill}\thinspace}
\providecommand{\MR}{\relax\ifhmode\unskip\space\fi MR }
\providecommand{\MRhref}[2]{%
  \href{http://www.ams.org/mathscinet-getitem?mr=#1}{#2}
}
\providecommand{\href}[2]{#2}
\begin{thebibliography}{10}

\bibitem{Andreev2010}
Vladimir~B. Andreev, \emph{Pointwise approximation of corner singularities for
  singularly perturbed elliptic problems with characteristic layers}, Int. J.
  Numer. Anal. Model. \textbf{7} (2010), no.~3, 416--427. \MR{2644281}

\bibitem{Brdar2021}
M.~Brdar, G.~Radojev, H.-G. Roos, and Lj. Teofanov, \emph{Superconvergence
  analysis of {FEM} and {SDFEM} on graded meshes for a problem with
  characteristic layers}, Comput. Math. Appl. \textbf{93} (2021), 50--57.
  \MR{4244628}

\bibitem{Castillo2002}
Paul Castillo, Bernardo Cockburn, Dominik Sch\"{o}tzau, and Christoph Schwab,
  \emph{Optimal a priori error estimates for the {$hp$}-version of the local
  discontinuous {G}alerkin method for convection-diffusion problems}, Math.
  Comp. \textbf{71} (2002), no.~238, 455--478. \MR{1885610}

\bibitem{Cheng:Jiang:Stynes:2022}
Yao Cheng, Shan Jiang, and Martin Stynes, \emph{Supercloseness of the local
  discontinuous {G}alerkin method for a singularly perturbed
  convection-diffusion problem},  (2022),
  https://doi.org/10.48550/arXiv.2206.08642.

\bibitem{Cheng2021:Calcolo}
Yao Cheng and Yanjie Mei, \emph{Analysis of generalised alternating local
  discontinuous {G}alerkin method on layer-adapted mesh for singularly
  perturbed problems}, Calcolo \textbf{58} (2021), no.~4, Paper No. 52, 36.
  \MR{4336260}

\bibitem{Cheng2020}
Yao Cheng, Yanjie Mei, and Hans-G\"{o}rg Roos, \emph{The local discontinuous
  {G}alerkin method on layer-adapted meshes for time-dependent singularly
  perturbed convection-diffusion problems}, Comput. Math. Appl. \textbf{117}
  (2022), 245--256.

\bibitem{CYWL22}
Yao Cheng, Li~Yan, Xuesong Wang, and Yanhua Liu, \emph{Optimal maximum-norm
  estimate of the {LDG} method for singularly perturbed convection-diffusion
  problem}, Appl. Math. Lett. \textbf{128} (2022), Paper No. 107947, 11.
  \MR{4374701}

\bibitem{Cockburn:Shu:LDG}
Bernardo Cockburn and Chi-Wang Shu, \emph{The local discontinuous {G}alerkin
  method for time-dependent convection-diffusion systems}, SIAM J. Numer. Anal.
  \textbf{35} (1998), no.~6, 2440--2463. \MR{1655854}

\bibitem{Franz:Kellogg:Stynes2012}
Sebastian Franz, R.~Bruce Kellogg, and Martin Stynes, \emph{Galerkin and
  streamline diffusion finite element methods on a {S}hishkin mesh for a
  convection-diffusion problem with corner singularities}, Math. Comp.
  \textbf{81} (2012), no.~278, 661--685. \MR{2869032}

\bibitem{Franz2010}
Sebastian Franz and Gunar Matthies, \emph{Local projection stabilisation on
  {S}-type meshes for convection-diffusion problems with characteristic
  layers}, Computing \textbf{87} (2010), no.~3-4, 135--167. \MR{2640011}

\bibitem{Kellogg:Stynes:2003:JDE}
R.~Bruce Kellogg and Martin Stynes, \emph{Corner singularities and boundary
  layers in a simple convection-diffusion problem}, J. Differential Equations
  \textbf{213} (2005), no.~1, 81--120. \MR{2139339}

\bibitem{Kellogg:Stynes:2007:AML}
\bysame, \emph{Sharpened bounds for corner singularities and boundary layers in
  a simple convection-diffusion problem}, Appl. Math. Lett. \textbf{20} (2007),
  no.~5, 539--544. \MR{2303990}

\bibitem{Linss10}
Torsten Lin\ss, \emph{Layer-adapted meshes for reaction-convection-diffusion
  problems}, Lecture Notes in Mathematics, vol. 1985, Springer-Verlag, Berlin,
  2010. \MR{2583792}

\bibitem{Linss:Stynes:2001}
Torsten Lin\ss\ and Martin Stynes, \emph{Numerical methods on {S}hishkin meshes
  for linear convection-diffusion problems}, Comput. Methods Appl. Mech. Engrg.
  \textbf{190} (2001), no.~28, 3527--3542. \MR{3618535}

\bibitem{LiuZhang2018}
Xiaowei Liu and Jin Zhang, \emph{Uniform supercloseness of {G}alerkin finite
  element method for convection-diffusion problems with characteristic layers},
  Comput. Math. Appl. \textbf{75} (2018), no.~2, 444--458. \MR{3765861}

\bibitem{Riordan2008}
E.~O'Riordan and G.~I. Shishkin, \emph{Parameter uniform numerical methods for
  singularly perturbed elliptic problems with parabolic boundary layers}, Appl.
  Numer. Math. \textbf{58} (2008), no.~12, 1761--1772. \MR{2464809}

\bibitem{RST2008}
Hans-G\"{o}rg Roos, Martin Stynes, and Lutz Tobiska, \emph{Robust numerical
  methods for singularly perturbed differential equations}, second ed.,
  Springer Series in Computational Mathematics, vol.~24, Springer-Verlag,
  Berlin, 2008, Convection-diffusion-reaction and flow problems. \MR{2454024}

\bibitem{Roos2007}
Hans-G\"{o}rg Roos and Helena Zarin, \emph{A supercloseness result for the
  discontinuous {G}alerkin stabilization of convection-diffusion problems on
  {S}hishkin meshes}, Numer. Methods Partial Differential Equations \textbf{23}
  (2007), no.~6, 1560--1576. \MR{2355174}

\bibitem{Schwab1998}
Christoph Schwab, \emph{p- and hp-{F}inite {E}lement {M}ethods}, Theory and
  Applications in Solid and Fluid Mechanics, Oxford University Press, Oxford,
  UK, 1998.

\bibitem{StySty18}
Martin Stynes and David Stynes, \emph{Convection-diffusion problems}, Graduate
  Studies in Mathematics, vol. 196, American Mathematical Society, Providence,
  RI; Atlantic Association for Research in the Mathematical Sciences (AARMS),
  Halifax, NS, 2018, An introduction to their analysis and numerical solution.
  \MR{3839601}

\bibitem{Xie2010MC}
Ziqing Xie and Zhimin Zhang, \emph{Uniform superconvergence analysis of the
  discontinuous {G}alerkin method for a singularly perturbed problem in 1-{D}},
  Math. Comp. \textbf{79} (2010), no.~269, 35--45. \MR{2552216}

\bibitem{Xie2009JCM}
Ziqing Xie, Zuozheng Zhang, and Zhimin Zhang, \emph{A numerical study of
  uniform superconvergence of {LDG} method for solving singularly perturbed
  problems}, J. Comput. Math. \textbf{27} (2009), no.~2-3, 280--298.
  \MR{2495061}

\bibitem{Zarin2005}
Helena Zarin and Hans-G\"{o}rg Roos, \emph{Interior penalty discontinuous
  approximations of convection-diffusion problems with parabolic layers},
  Numer. Math. \textbf{100} (2005), no.~4, 735--759. \MR{2194592}

\bibitem{Zhang:Stynes:2017}
Jin Zhang and Martin Stynes, \emph{Supercloseness of continuous interior
  penalty method for convection-diffusion problems with characteristic layers},
  Comput. Methods Appl. Mech. Engrg. \textbf{319} (2017), 549--566.
  \MR{3638149}

\bibitem{Zhu:2018}
Huiqing Zhu and Fatih Celiker, \emph{Nodal superconvergence of the local
  discontinuous {G}alerkin method for singularly perturbed problems}, J.
  Comput. Appl. Math. \textbf{330} (2018), 95--116. \MR{3717581}

\bibitem{Zhu:2dMC}
Huiqing Zhu and Zhimin Zhang, \emph{Uniform convergence of the {LDG} method for
  a singularly perturbed problem with the exponential boundary layer}, Math.
  Comp. \textbf{83} (2014), no.~286, 635--663. \MR{3143687}

\end{thebibliography}
\bibliographystyle{amsplain}

\end{document}